\newcommand{\itg}{\mathbb{Z}}
\newcommand{\rtn}{\mathbb{Q}}
\newcommand{\rl}{\mathbb{R}}
\newcommand{\cx}{\mathbb{C}}
\newcommand{\ai}{\sqrt{-1}}
\newcommand{\inj}{\hookrightarrow}
\newcommand{\surj}{\twoheadrightarrow}
\newcommand{\isom}{\stackrel{\sim}{\to}}
\newcommand{\cst}{\textup{const}}
\newcommand{\kah}{K\"ahler }
\newcommand{\ke}{K\"ahler--Einstein }
\newcommand{\bl}[2]{\textup{Bl}_{#1} {#2}}
\newcommand{\ddbar}{\partial \bar{\partial}}
\newcommand{\actson}{\curvearrowright}
\newcommand{\prj}{\mathbb{P}}
\newcommand{\ocal}{\mathcal{O}}
\theoremstyle{plain}
\newtheorem{theorem}{Theorem}[section]
\newtheorem{lemma}[theorem]{Lemma}
\newtheorem{proposition}[theorem]{Proposition}
\theoremstyle{definition}
\newtheorem{definition}[theorem]{Definition}
\newtheorem{problem}[theorem]{Problem}
\newtheorem{notation}[theorem]{Notation}
\theoremstyle{definition}
\newtheorem{remark}[theorem]{Remark}
\newtheorem{question}[theorem]{Question}
\begin{document}

\title{Stability and canonical metrics on projective spaces blown up along a line}
\author{Yoshinori Hashimoto}

\maketitle

\abstract{Let $\text{Bl}_{\mathbb{P}^1} \mathbb{P}^n$ be a K\"ahler manifold obtained by blowing up a complex projective space $\mathbb{P}^n$ along a line $\mathbb{P}^1$. We prove that $\text{Bl}_{\mathbb{P}^1} \mathbb{P}^n$ does not admit constant scalar curvature K\"ahler metrics in any rational K\"ahler class, but admits extremal metrics, with an explicit formula in action-angle coordinates, in K\"ahler classes that are close to the pullback of the Fubini--Study class.}

\tableofcontents

\section{Introduction}

\subsection{Statement of the results}

Let $X$ be a compact \kah manifold. The existence of \kah metrics on $X$ with constant scalar curvature, or more generally extremal metrics as proposed by Calabi \cite{cal1}, is a question that has been intensively studied in the last few decades. Recall that a \kah metric $\omega$ is called \textbf{constant scalar curvature K\"ahler} (abbreviated as \textbf{cscK}) if its scalar curvature $S(\omega)$ is constant, and \textbf{extremal} if it satisfies $\bar{\partial} \mathrm{grad}^{1,0}_{\omega} S(\omega)=0$; the $(1,0)$-part of the gradient of its scalar curvature is a holomorphic vector field. Since $S(\omega)$ is defined to be the $\omega$-metric contraction of the Ricci curvature $\mathrm{Ric} (\omega) := - \ai \ddbar \log \det (\omega)$, obtaining cscK or extremal metrics amounts to solving a fully nonlinear partial differential equation (PDE), which is in general difficult. 

Consider now the following problem\footnote{This is mentioned, for example, in Sz\'ekelyhidi's survey \cite{szesurv} in the case $\dim_{\cx} Y >0$.}. 

\begin{problem} \label{blupprbhdmfd}
Suppose that a \kah manifold $X$ admits a cscK (resp. extremal) metric. Under what geometric hypotheses does the blowup $\mathrm{Bl}_Y X$ of $X$ along a complex submanifold $Y$ admit a cscK (resp. extremal) metric?
\end{problem}

The case $\dim_{\cx}  Y =0$ was solved by the theorems of Arezzo--Pacard \cite{ap1, ap2}, and Arezzo--Pacard--Singer \cite{aps}, which will be discussed in detail in \S \ref{blupcsckpts}, and will provide a background and motivation for considering Problem \ref{blupprbhdmfd}. The remaining case is $\dim_{\cx}  Y >0$, and we assume $\dim_{\cx} X \ge 3$ for the blowup to be non-trivial. On the other hand, there seems to be few results known about Problem \ref{blupprbhdmfd} when $\dim_{\cx}  Y >0$, and the solution of Problem \ref{blupprbhdmfd} in general seems to be very difficult at the moment; see \S \ref{prev} for the review of previously known results. 

We thus decide to focus instead on a particular example, the blowup $\textup{Bl}_{\mathbb{P}^1} \mathbb{P}^n$ of $\prj^n$ along a line, in the hope that this may serve as a useful example in attacking Problem \ref{blupprbhdmfd}. The result that we prove is the following.




\begin{theorem} \label{intstmresp1pn}
Let $n \ge 3$ and consider the blowup $\pi : \mathrm{Bl}_{\prj^1} \prj^n \to \prj^n$ of $\prj^n$ along a line $\prj^1$. $\mathrm{Bl}_{\prj^1} \prj^n$ is slope unstable (and hence $K$-unstable, cf.~\S \ref{slo}) with respect to any polarisation; in particular, $\mathrm{Bl}_{\prj^1} \prj^n$ cannot admit a cscK metric in any rational \kah class (cf.~\S \ref{ssecp1pnnonextres}). However, if we choose $\epsilon >0$ sufficiently small, there exists an extremal metric in the \kah class $\pi^* c_1 (\mathcal{O}_{\prj^n} (1)) - \epsilon c_1( [E])$, with an explicit formula given in Proposition \ref{mainthmextp1pn}, where $[E]$ is the line bundle associated to the exceptional divisor $E$.
\end{theorem}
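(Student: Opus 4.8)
\medskip
\noindent\textbf{Proof proposal.} The statement has two essentially independent halves: slope instability --- and hence $K$-instability --- for every polarisation (which forces non-existence of cscK metrics in rational classes), and the construction of an extremal metric in $\pi^*c_1(\ocal_{\prj^n}(1))-\epsilon c_1([E])$ for $\epsilon$ small. For the instability I would work with Ross--Thomas slope stability. Writing $H=c_1(\ocal_{\prj^n}(1))$, one has $\mathrm{Pic}(\mathrm{Bl}_{\prj^1}\prj^n)=\itg\pi^*H\oplus\itg E$ with ample cone $\{a\pi^*H-bE:a>b>0\}$, so up to scaling a polarisation is recorded by one ratio $\epsilon=b/a\in(0,1)\cap\rtn$. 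The plan is to exhibit a natural destabilising subscheme $Z$ --- the likeliest candidates being the exceptional divisor $E\cong\prj^1\times\prj^{n-2}$, the strict transform of a hyperplane through the line, or a fibre of the projection $p:\mathrm{Bl}_{\prj^1}\prj^n\to\prj^{n-2}$ --- and to compute its Ross--Thomas slope $\mu_c(\mathcal{I}_Z)$.

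For such $Z$ the required data are explicit: the Hilbert polynomials $h^0\big(\mathrm{Bl}_{\prj^1}\prj^n,\,k(\pi^*H-\epsilon E)-xkZ\big)$ reduce to counting degree-$k$ forms on $\prj^n$ with prescribed vanishing along a line --- equivalently to lattice-point counts in the moment polytope, as $\mathrm{Bl}_{\prj^1}\prj^n$ is toric --- so the expansions $a_0(x),a_1(x)$ and the Seshadri constant of $Z$ are elementary, and one could equally bypass slopes and compute directly the Donaldson--Futaki invariant of an explicit toric test configuration. The objective is to show that the Ross--Thomas inequality $\mu_c(\mathcal{I}_Z)\le\mu(\mathrm{Bl}_{\prj^1}\prj^n,L)$ fails for some admissible $c$ not exceeding the Seshadri constant, and --- this is the actual content --- that it fails uniformly for every $\epsilon\in(0,1)$, i.e.\ that an explicit rational inequality in $\epsilon$ and $n$ holds identically. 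Slope instability then gives $K$-instability (cf.~\S\ref{slo}), and $K$-instability in a rational class obstructs cscK by the results of Donaldson, Stoppa and others, which is the ``in particular''. (One could additionally observe that $\mathrm{Aut}^0(\mathrm{Bl}_{\prj^1}\prj^n)$ is the stabiliser of a line in $\mathrm{PGL}_{n+1}$, which is parabolic and hence non-reductive, so by Matsushima--Lichnerowicz no K\"ahler class at all carries a cscK metric --- but it is the slope computation that yields the sharper, polarisation-explicit assertion, and the $K$-unstable conclusion, of the theorem.)

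For the extremal metric I would set up an explicit Calabi-type ansatz. Projecting $\prj^n$ away from the line resolves on the blowup to the morphism $p:\mathrm{Bl}_{\prj^1}\prj^n\to\prj^{n-2}$, which realises $\mathrm{Bl}_{\prj^1}\prj^n$ as the $\prj^2$-bundle $\prj(\ocal^{\oplus2}\oplus\ocal_{\prj^{n-2}}(1))$ (or one may keep the toric picture throughout). This carries a Hamiltonian $S^1$-action --- rotating the $\ocal(1)$-factor --- whose fixed loci are the divisor $E\cong\prj^1\times\prj^{n-2}$ and the disjoint section $\Lambda\cong\prj^{n-2}$; in the associated action-angle coordinates an $S^1$-invariant K\"ahler metric in the class $\pi^*H-\epsilon E$ is encoded by a momentum profile $\phi=\phi_\epsilon(\tau)$ on a moment interval $[\tau_-(\epsilon),\tau_+(\epsilon)]$, positive on the interior, with boundary behaviour at $\tau_\pm$ fixed by smooth closure of the metric along $E$ and $\Lambda$. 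Since the base $\prj^{n-2}$ and the fibres $\prj^2$ carry their (cscK) Fubini--Study metrics, the scalar curvature of such a metric has the shape $S=(\text{affine in }\tau)-p(\tau)^{-1}\big(p(\tau)\phi(\tau)\big)''$ for explicit polynomials --- this is the generalised Calabi / momentum construction of Hwang--Singer and Apostolov--Calderbank--Gauduchon--T\o nnesen-Friedman --- so the extremal equation ``$S$ affine in $\tau$'' becomes the second-order linear ODE $\big(p(\tau)\phi(\tau)\big)''=q(\tau)-p(\tau)(\alpha\tau+\beta)$, with $p,q$ polynomials in $\tau$ whose coefficients depend on $n$ and $\epsilon$. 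Integrating twice and fixing $\alpha,\beta$ (the extremal affine function, i.e.\ the extremal vector field) together with the two integration constants from the four boundary conditions --- whose compatibility is a Futaki-type linear identity --- produces the explicit rational profile $\phi_\epsilon$ recorded in Proposition~\ref{mainthmextp1pn}.

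The one delicate point, and exactly where ``$\epsilon$ sufficiently small'' is needed, is that the ansatz defines an honest K\"ahler metric only if $\phi_\epsilon(\tau)>0$ throughout $(\tau_-(\epsilon),\tau_+(\epsilon))$. I would verify this by a direct estimate on the explicit profile: as $\epsilon\to0$ the polarisation degenerates to the semiample class $\pi^*H$, and the moment interval and all the constants in $\phi_\epsilon$ degenerate in a controlled, polynomial-in-$\epsilon$ way to a limiting model profile which is manifestly positive, so positivity persists for all small $\epsilon>0$ by continuity. The bookkeeping --- simultaneously tracking $\tau_\pm(\epsilon)$, $\alpha$, $\beta$ and the integration constants, and identifying the precise rate at which each endpoint condition degenerates --- is the main obstacle in this half, rather than any conceptual difficulty. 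Once positivity is secured the metric is automatically extremal with the computed (non-trivial) extremal vector field and, consistently with the first half, has non-constant scalar curvature; this completes the argument.
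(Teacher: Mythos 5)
Your proposal matches the paper's strategy in both halves: the instability is proved exactly as you describe, by computing the Ross--Thomas slope of the exceptional divisor $E$ via intersection numbers on $\mathrm{Pic}=\itg\pi^*H\oplus\itg[E]$ and verifying that $\mu_{c}(\mathcal{O}_E,L)<\mu(X,L)$ at $c=\mathrm{Sesh}(E)=1-\epsilon$ as a polynomial inequality holding for every $\epsilon\in(0,1)$ (this uniform verification is the real work and is only asserted in your sketch), while the extremal metric is constructed by precisely your ansatz --- your momentum coordinate $\tau$ is the paper's toric variable $\rho=\sum_{i=1}^{n-1}x_i$ with $[\tau_-,\tau_+]=[\epsilon,1]$, the extremal equation reduces to the same second-order linear ODE with four constants fixed by the boundary behaviour at $E$ and at the blown-down locus $\Lambda\cong\prj^{n-2}$, and the hypothesis $\epsilon\ll1$ enters exactly where you say, in checking positivity of the resulting profile (Abreu's conditions on the Hessian). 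Your parenthetical observation that $\mathrm{Aut}_0$ is a parabolic subgroup of $\mathrm{PGL}_{n+1}$, hence non-reductive, is correct and is a genuine shortcut the paper does not take --- it deduces non-reductivity a posteriori from the extremal metric after remarking that a direct check could be nontrivial --- and it yields Matsushima--Lichnerowicz non-existence in every K\"ahler class at once, though, as you note, only the slope computation gives the $K$-instability statement of the theorem.
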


\begin{notation} \label{notadtnpdnfisp}
In this paper, given a divisor $D$ in a \kah manifold $X$, we write $[D]$ for the line bundle $\mathcal{O}_{X}(D)$ associated to $D$. Also, we shall use the additive notation for the tensor product of line bundles, and the multiplicative notation will be reserved for the intersection product of divisors: given $n$ divisors $D_1 , \dots , D_n$, we shall write $D_1 . D_2 . \cdots . D_n$ to mean $\int_X c_1 ([D_1]) c_1 ([D_2]) \dots c_1 ([D_n])$, and $D_1^i . D_2^{n-i}$ to mean $\int_X c_1 ([D_1])^i c_1 ([D_2])^{n-i}$.
\end{notation}

\begin{remark}
In spite of its apparent simplicity, there has been no known result on $\mathrm{Bl}_{\prj^1} \prj^n$ in terms of cscK or extremal metrics, to the best of the author's knowledge (cf. \S \ref{prev}). This is perhaps related to the fact that $\mathrm{Bl}_{\prj^1} \prj^n$ does not admit a structure of a $\prj^1$-bundle; see \S \ref{calwotpbarr} for details. 


\end{remark}

\subsection{Blowup of cscK and extremal manifolds at points}  \label{blupcsckpts}
We now discuss the background for Theorem \ref{intstmresp1pn}, namely Problem \ref{blupprbhdmfd} for the case $\dim_{\cx}  Y=0$. We prepare some notation before doing so; write $\textup{Ham} (\omega , g)$ for the group of Hamiltonian isometries of $g$, i.e.~isometries of $(X,g)$ which are also Hamiltonian diffeomorphisms of $\omega$ and let $\mathfrak{ham} $ be its Lie algebra. We observe that $\textup{Ham} (\omega , g)$ is a finite dimensional compact Lie group. This allows us to define a moment map $m : X \to \mathfrak{ham}^*$, which we may normalise so that $\int_X \langle m, v \rangle \omega^n / n! =0$ for all $v \in \mathfrak{ham}$ with $\langle , \rangle$ being the natural duality pairing between $\mathfrak{ham}$ and $\mathfrak{ham}^*$. If $X$ admits a cscK metric, a classical theorem of Matsushima \cite{matsushima} and Lichnerowicz \cite{lichnerowicz} states the following for the Lie algebra of the group $\mathrm{Aut}_0 (X,L)$ consisting of the elements in the group of automorphisms $\mathrm{Aut} (X)$ which lift to the automorphism of the total space of an ample line bundle $L$ on $X$. 


\begin{theorem} \emph{(\cite{lichnerowicz, matsushima}; see also \cite[Theorem 1]{lebsim}, \cite[Theorems 6.1 and 9.4]{kobayashi})} \label{thmlsautxlcxham}
Suppose that $X$ admits a cscK metric. Writing $\mathfrak{aut}(X,L)$ for $\mathrm{LieAut}_0(X,L)$, we have $\mathfrak{aut} (X,L) = \mathfrak{ham}^{\cx}$.
\end{theorem}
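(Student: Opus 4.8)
The plan is to prove the theorem by analysing the \emph{Lichnerowicz operator} $\lich{\omega}$ of a cscK metric $\omega$ representing $c_1(L)$, which is the argument of Lichnerowicz (generalising Matsushima's proof in the \ke case) in its modern formulation via $\lich{\omega}$, as in \cite{lebsim}. One inclusion is elementary: an element of $\mathfrak{ham}$ is a Hamiltonian Killing field, hence a real holomorphic vector field whose flow preserves $\omega$, and it therefore lifts to the total space of $L$ through the Chern connection of a Hermitian metric with curvature $\omega$; extending $\cx$-linearly gives an embedding $\mathfrak{ham}^{\cx} \hookrightarrow \mathfrak{aut}(X,L)$. The content of the theorem is thus the reverse inclusion $\mathfrak{aut}(X,L) \subseteq \mathfrak{ham}^{\cx}$.

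The first step is to attach to each $V \in \mathfrak{aut}(X,L)$ a \emph{holomorphy potential}. For holomorphic $V$ the contraction $\iota_V \omega$ is a $\bar\partial$-closed $(0,1)$-form, and the hypothesis that $V$ lifts to $L$ means precisely that its Dolbeault class in $H^1(X , \mathcal{O}_X)$, which is the obstruction to the lift, vanishes; hence $\iota_V \omega = \ai \, \bar\partial f_V$ for some $f_V \in C^\infty (X , \cx)$ unique up to an additive constant, i.e.\ $V = \mathrm{grad}^{1,0}_\omega f_V$. (This is the one place where one genuinely needs $\mathrm{Aut}_0(X,L)$ rather than $\mathrm{Aut}_0(X)$.) Next, $V = \mathrm{grad}^{1,0}_\omega f_V$ is holomorphic if and only if $\mathcal{D}_\omega f_V = \bar\partial \, \mathrm{grad}^{1,0}_\omega f_V = 0$, and since $\| \mathcal{D}_\omega f \|_{L^2}^2 = \langle \lich{\omega} f , f \rangle_{L^2}$ with $\lich{\omega}$ a non-negative self-adjoint elliptic operator, this is equivalent to $f_V \in \ker \lich{\omega}$. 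So $V \mapsto f_V$ identifies $\mathfrak{aut}(X,L)$ with $\ker \lich{\omega}$ modulo the constants, the inverse being $f \mapsto \mathrm{grad}^{1,0}_\omega f$.

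The crux is the Weitzenb\"ock-type identity expressing $\lich{\omega} f$ as $\Delta_\omega^2 f$ plus curvature terms built from $\ric(\omega)$, in which the \emph{only} contribution that fails to commute with complex conjugation is a first-order term proportional to the pointwise pairing of $\bar\partial f$ with $\bar\partial S(\omega)$. When $S(\omega)$ is constant this term drops out, so $\lich{\omega}$ becomes a real operator and $\ker \lich{\omega}$ is stable under $f \mapsto \bar f$; it therefore decomposes as $\mathcal{H} \oplus \ai \, \mathcal{H}$, where $\mathcal{H}$ is the real subspace of real-valued elements. A final check shows that for real $f$ the field $\mathrm{grad}^{1,0}_\omega f$ is holomorphic exactly when the real Hamiltonian vector field $J \, \mathrm{grad}_g f$ is Killing, so $\mathcal{H}$ modulo constants is $\mathfrak{ham}$; combining with $\ker \lich{\omega} = \mathcal{H} \oplus \ai \, \mathcal{H}$ yields $\mathfrak{aut}(X,L) \cong \mathfrak{ham} \oplus \ai \, \mathfrak{ham} = \mathfrak{ham}^{\cx}$.

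I expect the main obstacle to be the Weitzenb\"ock computation of the previous paragraph: writing $\lich{\omega}$ out explicitly and verifying that the obstruction to its being a real operator is captured exactly by $\bar\partial S(\omega)$, so that constancy of the scalar curvature is precisely what makes the kernel conjugation-invariant. The surrounding Hodge-theoretic and Lie-theoretic steps (existence of holomorphy potentials, the $L^2$ characterisation of $\ker \lich{\omega}$, and the correspondence between real holomorphy potentials and Killing potentials) are standard, though they require care with sign and normalisation conventions.
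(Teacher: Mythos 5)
The paper does not prove this statement; it is quoted from the literature, with the proof deferred to \cite{lichnerowicz, matsushima}, \cite[Theorem 1]{lebsim} and \cite[Theorems 6.1 and 9.4]{kobayashi}. Your outline is precisely the standard Lichnerowicz-operator argument given in those references (holomorphy potentials via the vanishing obstruction in $H^{0,1}$, the identification of $\mathfrak{aut}(X,L)$ with $\ker \lich{\omega}$ modulo constants, and the reality of $\lich{\omega}$ when $S(\omega)$ is constant forcing the conjugation-invariant splitting $\ker\lich{\omega} = \mathcal{H}\oplus\ai\,\mathcal{H}$), and it is correct as stated, up to the usual care with normalisations in the Weitzenb\"ock identity $\lich{\omega} f = \Delta^2_{\omega} f + \langle \ric(\omega), \ai\ddbar f\rangle_{\omega} + \tfrac12\langle \nabla S(\omega), \nabla f\rangle_{\omega}$.
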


We now consider Problem \ref{blupprbhdmfd} for $\dim_{\cx}  Y=0$. Suppose that we have a polarised cscK manifold $(X,L)$ which we blow up at points $p_1 , \dots , p_l$. We ask if the blown-up manifold $ \textup{Bl}_{p_1 , \dots , p_l} X$ admits a cscK metric in a ``perturbed'' \kah class so that the size of the exceptional divisor is small. Solution to this problem is given by the following theorem of Arezzo and Pacard \cite{ap2}, which generalises their previous result in \cite{ap1}.

\begin{theorem} \label{apthmpts}
\emph{(Arezzo--Pacard \cite{ap2})}
Let $(X,L)$ be a polarised \kah manifold with a cscK metric $\omega \in c_1 (L)$. Let $p_1 , \dots , p_l$ be distinct points in $X$ and $a_1 , \dots , a_l$ be positive real numbers. Suppose that the following conditions are satisfied:
\begin{enumerate}
\item $m(p_1) , \dots , m(p_l)$ spans $\mathfrak{ham}^*$,
\item $\sum_{i=1}^l a_i^{n-1} m(p_i) =0 \in \mathfrak{ham}^*$.
\end{enumerate}
Then there exists $\epsilon_0 >0$, $c >0$, and $\theta >0$ such that, for all $0< \epsilon < \epsilon_0$ the blowup $\hat{X}:= \textup{Bl}_{p_1, \dots, p_l} X$ of $X$ with the blowdown map $\pi : \hat{X} \to X$ admits a cscK metric $\omega_{\epsilon}$ in the perturbed \kah class
\begin{equation*}
 \pi^* [\omega] - \epsilon \sum_{i=1}^l \tilde{a_i} c_1([E_i])
\end{equation*}
where $\tilde{a_i}$ depends only on $\epsilon$ and satisfies $|\tilde{a_i} - a_i| \le c \epsilon^{\theta}$ as $\epsilon \to 0$ and $E_i$ stands for the exceptional divisor corresponding to the blowup at $p_i$. Moreover, $\omega_{\epsilon} \to \omega$ in the $C^{\infty}$-norm as $\epsilon \to 0$, away from $p_1 , \dots , p_l$.
\end{theorem}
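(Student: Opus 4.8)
The plan is to prove Theorem~\ref{apthmpts} by a gluing construction followed by a fixed-point argument, the standard strategy for such desingularisation problems. First I would build a family of approximate solutions: near each $p_i$ excise a ball of radius $\sim \epsilon^{1-\beta}$ and glue in a copy of the Burns--Simanca scalar-flat ALE K\"ahler metric on $\mathrm{Bl}_0 \cx^n$, rescaled by $\epsilon \tilde a_i$ and matched in K\"ahler potentials to the Euclidean model of $\omega$ at $p_i$ to a high enough order. This produces K\"ahler metrics $\omega_\epsilon$ on $\hat X$ in the class $\pi^*[\omega] - \epsilon \sum_i \tilde a_i c_1([E_i])$ whose scalar curvature equals the constant $S(\omega)$ except on the gluing annuli, where the error is $O(\epsilon^\kappa)$ for some $\kappa>0$ measured in appropriate weighted H\"older norms.

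The heart of the argument is the linear theory. Linearising the scalar curvature map at $\omega_\epsilon$ gives, modulo lower order terms, the Lichnerowicz operator $\lich{\omega_\epsilon}$, and I need a right inverse $G_\epsilon$ for it whose operator norm grows at most polynomially in $\epsilon^{-1}$ as $\epsilon \to 0$, on weighted H\"older spaces $C^{k,\alpha}_\delta$ whose weights are tuned to the degenerating neck. On the ALE ends the model operator is surjective onto a complement of the slowly-decaying harmonic functions; on $X$ it is Fredholm with kernel and cokernel canonically identified with $\mathfrak{ham}$ via the moment map, using Theorem~\ref{thmlsautxlcxham} and Hodge theory. Hypothesis (1) --- that $m(p_1),\dots,m(p_l)$ span $\mathfrak{ham}^*$ --- is precisely what lets the localised data at the points span this obstruction space, so that after restricting to the $L^2$-orthogonal complement of $\mathfrak{ham}$ one obtains the uniform right inverse. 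Establishing this uniform bound, by a blow-up/contradiction analysis ruling out kernel or cokernel elements that concentrate in the neck as $\epsilon \to 0$, is the main obstacle.

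It then remains to handle the global obstruction: solving $S(\omega_\epsilon + \ai\ddbar\varphi) = \cst$ exactly requires the $L^2$-projection of the error onto $\mathfrak{ham}$ to vanish. A computation shows this projection is, to leading order in $\epsilon$, a nonzero multiple of $\sum_i \tilde a_i^{n-1} m(p_i)$, so hypothesis (2) kills it at the relevant order, and the remaining discrepancy is absorbed by allowing $\tilde a_i$ to vary with $\epsilon$ within $|\tilde a_i - a_i| \le c\epsilon^\theta$ --- an application of the implicit function theorem in the finite-dimensional parameter $(\tilde a_i)$.

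Finally, with $G_\epsilon$ and the parameter adjustment in place, rewrite the cscK equation as a fixed-point equation $\varphi = \mathcal{N}_\epsilon(\varphi)$, where $\mathcal{N}_\epsilon$ collects the approximate-solution error, the superlinear remainder of the scalar curvature operator (controlled by a quadratic estimate on a small ball of the weighted space), and the parameter correction. For $\epsilon$ sufficiently small $\mathcal{N}_\epsilon$ is a contraction on a ball whose radius tends to $0$, and its fixed point yields $\varphi_\epsilon$ with $\|\varphi_\epsilon\| \to 0$; the metric $\omega_\epsilon$ is then cscK, and the weighted estimates together with interior elliptic regularity give $\omega_\epsilon \to \omega$ in $C^\infty_{\mathrm{loc}}$ away from $p_1,\dots,p_l$.
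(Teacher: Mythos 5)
This theorem is quoted in the paper from Arezzo--Pacard \cite{ap2} without proof, so there is no in-paper argument to compare against; your outline is an accurate summary of the strategy of the cited source (gluing rescaled Burns--Simanca ALE metrics, uniform right inverse for the Lichnerowicz operator on weighted H\"older spaces with the moment-map conditions (1) and (2) handling the finite-dimensional obstruction $\mathfrak{ham}$, then a contraction-mapping/implicit-function argument in $\varphi$ and the parameters $\tilde a_i$). As a proof proposal it is the correct approach, with the main unproved technical content being, as you note, the uniform bound on the right inverse and the precise leading-order computation of the projection of the error onto $\mathfrak{ham}$.
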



By Theorem \ref{thmlsautxlcxham}, all of these hypotheses are vacuous if we assume $\mathfrak{aut} (X,L) = 0$. However, in presence of nontrivial holomorphic vector fields on $X$, we cannot choose the number and positions of $p_1, \dots , p_l$ arbitrarily to get a cscK metric on $\hat{X}$ (cf.~Theorem \ref{rtsistbblptpn}). 


Theorem \ref{apthmpts} has many differential-geometric and algebro-geometric applications \cite{dvz, donext, shu, stokst}; we note in particular that it was used to construct an example of asymptotically Chow unstable cscK manifold \cite{dvz}, and also to prove the $K$-stability of cscK manifolds with discrete automorphism group \cite{stokst}; see also \S \ref{ssecp1pnnonextres} for related discussions.

Even though $\mathfrak{aut} (X, L) \neq 0$ (or more precisely $\mathfrak{ham} \neq 0$) imposes some restrictions on the applicability of Arezzo--Pacard theorem, there is still hope of finding an extremal metric under weaker hypotheses, and moreover, it is natural to expect a version of this theorem for extremal metrics. Such result was indeed proved by Arezzo, Pacard, and Singer \cite[Theorem 2.0.2]{aps}. Just as Theorem \ref{apthmpts} was used by Stoppa \cite{stokst} to prove the $K$-stability of cscK manifolds when $\mathfrak{aut} (X,L)=0$, this result was used by Stoppa and Sz\'ekelyhidi \cite{stosze} to prove the relative $K$-stability of \kah manifolds with an extremal metric. We finally note that  Sz\'ekelyhidi \cite{szebl, szebl2} later established a connection to the $K$-stability of the blowup $\hat{X}$ when $X$ admits an extremal metric.

\subsection{Comparison to previous results} \label{prev}
We now return to the case $X = \prj^n$ and $Y = \prj^1$, to consider $\mathrm{Bl}_{\prj^1} \prj^n$. Our results (Theorem \ref{intstmresp1pn}) have much in common with, or more precisely are modelled after, the ones for the blowup $\mathrm{Bl}_{\mathrm{pt}} \prj^n$ of $\prj^n$ at a point. We review some previously known results on $\mathrm{Bl}_{\mathrm{pt}} \prj^n$ and its generalisations, as well as several nonexistence results that seem to be particularly relevant to Problem \ref{blupprbhdmfd}.

\subsubsection{Calabi's work on projectivised bundles and related results} \label{calwotpbarr}

In a seminal paper, Calabi \cite{cal1} presented the first examples of \kah manifolds which admit a non-cscK extremal metric. More precisely, he proved the following theorem.

\begin{theorem}
\emph{(Calabi \cite{cal1})} \label{calextpclbpn}
The projective completion $\prj (\mathcal{O}_{\prj^{n-1}} (-m) \oplus \cx) \to \prj^{n-1}$ of line bundles $\mathcal{O}_{\prj^{n-1}} (-m) \to \prj^{n-1}$, for any $m,n \in \mathbb{N}$, admits an extremal metric in each \kah class.
\end{theorem}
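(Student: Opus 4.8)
The plan is to apply Calabi's ansatz, exploiting the symmetry of $M := \prj(\ocal_{\prj^{n-1}}(-m)\oplus\cx)$. The fibrewise $\cx^*$-action has a maximal compact $S^1$ rotating the fibres, and since $\ocal_{\prj^{n-1}}(-m)$ is $\mathrm{PU}(n)$-equivariant this combines with the transitive isometry action on $(\prj^{n-1},\omega_{FS})$ to give a compact group acting on $M$ with cohomogeneity one. First I would look for Kähler metrics invariant under this group. Away from the zero section $E_0$ and the section at infinity $E_\infty$ (each a copy of $\prj^{n-1}$) such a metric is encoded by a single function of one real variable: passing to the moment map $\tau$ of the $S^1$-action, one writes $\omega$ through a \emph{momentum profile} $\phi(\tau)$ on an interval $[\tau_-,\tau_+]$ whose endpoints are fixed by the Kähler class. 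Strict positivity of $\phi$ on $(\tau_-,\tau_+)$ is exactly the condition that $\omega$ be a genuine Kähler metric, while the conditions $\phi(\tau_\pm)=0$ and $\phi'(\tau_\pm)=\mp1$ are forced by smoothness of the compactification of $\omega$ across $E_0$ and $E_\infty$.

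Next I would compute $S(\omega)$. Because the base $(\prj^{n-1},\omega_{FS})$ has constant scalar curvature, the standard cohomogeneity-one computation gives $S(\omega)$ as an explicit expression in $\tau,\phi,\phi',\phi''$, schematically $S(\omega)=(\text{base term})/(a+\tau)-(a+\tau)^{-(n-1)}\big[(a+\tau)^{n-1}\phi\big]''$ up to normalisation, where $a>0$ is a constant attached to the Kähler class. The decisive point is that, under this ansatz, the $(1,0)$-gradient of $S(\omega)$ is holomorphic if and only if $S(\omega)$ is an affine function $A\tau+B$ of $\tau$, since the Hamiltonian of the holomorphic vector field generating the fibrewise $\cx^*$-action is precisely $\tau$. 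Imposing $S(\omega)=A\tau+B$ therefore turns the extremal equation into a \emph{linear} second-order ODE for $\phi$, which can be integrated explicitly twice; the slope $A$, the intercept $B$, and the two constants of integration --- four parameters in all --- are then pinned down by the four boundary conditions above. This produces, in each Kähler class, a unique invariant candidate $\omega$ together with an explicit formula for it. Note that the extra freedom in $A$ is exactly what makes the system solvable: forcing $A=0$, i.e.\ demanding cscK, generically overdetermines it, reflecting the Futaki obstruction on these manifolds.

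The main obstacle is to show that the explicit $\phi$ so obtained is strictly positive on $(\tau_-,\tau_+)$ for \emph{every} Kähler class; only then is the candidate an honest Kähler metric, which by construction is then extremal and lies in the prescribed class. Since the ODE is linear, $\phi$ is a completely explicit rational function of $\tau$ and of the class parameter, and one verifies directly that it has simple zeros exactly at $\tau_\pm$ with the correct signs of $\phi'$ and no zero in between. A convenient way to organise this is to track the sign of $\big[(a+\tau)^{n-1}\phi\big]$ and of its first derivative through the two integrations, using the boundary data and the definite sign of the base-curvature term; the fact that this positivity persists across the entire Kähler cone is the special feature of the bundles $\ocal_{\prj^{n-1}}(-m)\oplus\cx$ (for more general projectivised bundles it can fail in some classes). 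Once positivity is established one records the resulting explicit $\omega$ and its affine scalar curvature, which completes the proof.
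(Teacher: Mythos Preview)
The paper does not actually prove this theorem: it is stated as a classical result of Calabi \cite{cal1} and invoked without proof, the paper only remarking that ``Calabi's original proof exploited this structure'' (the $\prj^1$-bundle structure) and that the result was later re-established by toric methods in \cite{abrrev, raza}. So there is no proof in the paper to compare your proposal against.

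That said, your sketch is a faithful outline of Calabi's original argument via the momentum profile (cohomogeneity-one) ansatz: reduce to an ODE for a profile function $\phi(\tau)$, observe that the extremal condition becomes the linear equation $S(\omega)=A\tau+B$, solve explicitly, and match the four boundary conditions $\phi(\tau_\pm)=0$, $\phi'(\tau_\pm)=\mp 1$ to the four free constants. The one place where your write-up is thin is the positivity verification: saying one ``verifies directly'' that $\phi>0$ on $(\tau_-,\tau_+)$ for \emph{every} K\"ahler class is where the actual work lies, and in Calabi's paper this is done by an explicit computation of the resulting rational function. Your suggested strategy of tracking the sign of $(a+\tau)^{n-1}\phi$ through the two integrations is the right idea, but in a complete proof you would need to carry it out rather than assert it. Note also that the paper's own constructions (for $\mathrm{Bl}_{\prj^1}\prj^n$, not for this theorem) use the toric/symplectic-potential framework of Abreu rather than the momentum-profile language; these are equivalent in the $\prj^1$-bundle case but the toric formulation is what the paper actually develops in \S\ref{tor}.
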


We observe that $\prj (\mathcal{O}_{\prj^{n-1}} (-1) \oplus \cx)$ is simply the blowup $\mathrm{Bl}_{\mathrm{pt}} \prj^n$ of $\prj^n$ at a point; the above theorem thus implies that there exists an extremal metric in each \kah class on $\mathrm{Bl}_{\mathrm{pt}} \prj^n$, although Theorem \ref{rtsistbblptpn} due to Ross and Thomas shows that none of these extremal metrics can be cscK.

There are two important features of $\mathrm{Bl}_{\mathrm{pt}} \prj^n$ (or more generally $\prj (\mathcal{O}_{\prj^{n-1}} (-m) \oplus \cx)$) that can be used in the construction of extremal metrics; the $\prj^1$-bundle structure and the toric structure. We first focus on the $\prj^1$-bundle structure. Calabi's original proof exploited this structure, which was later generalised by many mathematicians to various situations. While the reader is referred to \cite[\S 4.5]{hs} for a historical survey, we wish to particularly mention the following case to which this theory applies: suppose that we blow up two skew planes $P_1 \cong \prj^k$ and $P_2 \cong \prj^{n-k-1}$ in $\prj^n$. Then $\textup{Bl}_{P_1 , P_2} \prj^{n}$ is isomorphic to the total space of the projectivised bundle $\prj (\mathcal{O} (1,-1) \oplus \cx)$ over an exceptional divisor $\prj^k \times \prj^{n-k-1}$, where $\mathcal{O} (1,-1) = p^*_1 \mathcal{O}_{\prj^k}(1) \otimes p_2^* \mathcal{O}_{\prj^{n-k-1}} (-1)$ and $p_1 : \prj^k \times \prj^{n-k-1} \to \prj^k$ and $p_2 : \prj^k \times \prj^{n-k-1} \to \prj^{n-k-1}$ are the obvious projections. Then, we see that the following theorem of Hwang \cite{hwang} immediately implies that $\textup{Bl}_{P_1 , P_2} \prj^{n}$ carries an extremal metric in each \kah class.\footnote{In fact, some of the above examples admit \ke metrics, as shown by Koiso and Sakane \cite{ks}, Mabuchi \cite{mab87}, and also Nadel \cite{nad}.}

\begin{theorem}
\emph{(Hwang \cite[Corollary 1.2 and Theorem 2]{hwang})} \label{thhwapcomplb}
Let $(M, \omega_M)$ be a product of \ke Fano manifolds $(M_i , \omega_i)$, each with second Betti number 1. Let $\mathcal{F} := \bigotimes_{i=1}^r p_i^* K_i^{\otimes l_i} $, where $p_i : M \surj M_i$ is the obvious projection, $K_i$ is the canonical bundle of $M_i$, and $l_i \in \rtn$ so that $K_i^{\otimes l_i} $ is a (genuine) line bundle. Then the projective completion $\prj (\mathcal{F} \oplus \cx)$ of $\mathcal{F}$ over $M$ admits an extremal metric in each \kah class. 
\end{theorem}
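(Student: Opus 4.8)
The proof proceeds by the \emph{momentum construction} (Calabi's ansatz), which reduces the extremal equation on the total space $P := \prj(\mathcal{F} \oplus \cx)$ to an ordinary differential equation in the fibre direction; Theorem \ref{calextpclbpn} is the special case $r=1$, $M_1 = \prj^{n-1}$, $\mathcal{F} = \mathcal{O}_{\prj^{n-1}}(-m) = K_{\prj^{n-1}}^{\otimes (m/n)}$, and it is essentially that computation one has to carry out in the more general setting. First I would fix a \kah class on $P$. The $\cx^*$-action on the fibres of $\mathcal{F}$ extends to $P$ fixing the zero section $M_0$ and the infinity section $M_\infty$, each isomorphic to $M$, and a \kah class on $P$ is determined by its restrictions to $M_0$ and $M_\infty$ together with the area of a fibre; since every $M_i$ has $b_2(M_i)=1$, this is a finite list of real parameters. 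Equip each $M_i$ with its \ke metric $\omega_i$, so that the Ricci form is $\rho_i = \lambda_i \omega_i$ with $\lambda_i > 0$, and give $\mathcal{F} = \bigotimes_i p_i^* K_i^{\otimes l_i}$ the induced Hermitian metric; then the curvature of $\mathcal{F}$ and the Ricci form of any metric produced by the ansatz are, fibrewise, proportional to pullbacks of the $\omega_i$. One then writes an $S^1$-invariant metric on $P \setminus (M_0 \cup M_\infty)$ as $\pi^* \omega_M$ plus a fibre term governed by a momentum profile $\varphi = \varphi(\tau)$, where $\tau$ is the moment map of the fibre $S^1$ with image a closed interval $[\tau_-,\tau_+] \subset \rl$ fixed by the chosen class, subject to Dirichlet conditions $\varphi(\tau_\pm)=0$ and Neumann-type conditions on $\varphi'(\tau_\pm)$ guaranteeing smooth extension across $M_0$ and $M_\infty$, and with $\varphi > 0$ on $(\tau_-,\tau_+)$ being exactly the condition that the metric be a genuine \kah metric.

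Second, one computes the scalar curvature of such a metric. Because the transverse geometry is rigid — all the relevant $(1,1)$-forms on $M$ are proportional to the $\omega_i$, using $b_2(M_i)=1$ — the scalar curvature $\scal$ becomes a universal expression in $\tau, \varphi, \varphi', \varphi''$, schematically $\scal = \sum_i (\text{const})/(a_i + b_i \tau) - \frac{1}{2Q(\tau)}\bigl(Q(\tau)\varphi\bigr)''$ for an explicit polynomial $Q$ built from the $\lambda_i$ and the class parameters. Since every metric in the ansatz is $S^1$-invariant and $\mathrm{grad}^{1,0}(f(\tau))$ is holomorphic precisely when $f$ is affine in $\tau$, the extremal condition $\bar\partial\, \mathrm{grad}^{1,0} \scal = 0$ is equivalent to $\scal(\tau) = A\tau + B$ for constants $A, B$. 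Substituting this and integrating twice yields a closed-form expression for $\varphi$ (a rational-plus-polynomial function of $\tau$), in which $A$, $B$ and the two constants of integration are pinned down by the four boundary conditions on $\varphi$ and $\varphi'$. This produces a canonical candidate momentum profile, and hence an explicit candidate extremal metric, in every \kah class on $P$.

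Finally — and this is the only genuinely delicate step — one must verify that the candidate $\varphi$ is strictly positive on the open interval $(\tau_-,\tau_+)$, so that the ODE solution defines a smooth \kah metric on $P$ in the prescribed class; the endpoint values, the derivative conditions, and smoothness across $M_0$ and $M_\infty$ are automatic once positivity is known. This is a polynomial inequality in $\tau$ and the class parameters, and it is here that the full strength of the hypotheses enters: the assumption that each $(M_i,\omega_i)$ is \ke Fano with $b_2(M_i)=1$ forces the Ricci eigenvalues $\lambda_i$ to be tied to the same cohomological data that defines $\mathcal{F} = \bigotimes_i p_i^* K_i^{\otimes l_i}$, and a careful sign analysis of the explicit solution (leading coefficient of the relevant polynomial, together with its values and first derivatives at $\tau_\pm$) shows the inequality holds for all admissible parameters. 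I expect this positivity analysis to be the main obstacle; by contrast the set-up and the reduction to the ODE are essentially bookkeeping once Calabi's computation underlying Theorem \ref{calextpclbpn} is in hand.
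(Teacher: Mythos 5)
The paper does not prove this statement; it is quoted from Hwang \cite{hwang} as background, so there is no in-paper proof to compare against. That said, your outline is the right one: Hwang's argument is exactly the momentum construction (Calabi ansatz) you describe --- reduce the extremal equation on $\prj(\mathcal{F}\oplus\cx)$ to an ODE for the momentum profile $\varphi(\tau)$ on the moment interval, use the fact that within the ansatz the extremal condition is equivalent to $\scal$ being affine in $\tau$ to integrate $(Q\varphi)''$ in closed form (with $Q(\tau)=\prod_i(a_i+b_i\tau)^{\dim M_i}$ a product of linear factors precisely because $b_2(M_i)=1$ and the $\omega_i$ are \ke), and pin down the four free constants by the boundary conditions on $\varphi(\tau_\pm)$ and $\varphi'(\tau_\pm)$. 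Up to that point your account is accurate bookkeeping.

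The gap is the step you yourself flag: strict positivity of $\varphi$ on $(\tau_-,\tau_+)$ for \emph{every} \kah class. You assert that ``a careful sign analysis \dots shows the inequality holds for all admissible parameters,'' but you give no mechanism by which the \ke Fano and $b_2=1$ hypotheses force this (in Hwang's paper it comes from an explicit root-counting/sign argument for $(Q\varphi)''$ combined with the boundary data), and this is precisely the nontrivial content of the theorem --- everything else, as you say, is set-up. It is not a formality: in the directly analogous computation that this paper does carry out for $\mathrm{Bl}_{\prj^1}\prj^n$, the corresponding positivity statements (Lemmas \ref{lemposqeps} and \ref{lemsijposdef}) consume the bulk of the proof and are only obtained for sufficiently small $\epsilon$, i.e.\ for a restricted range of \kah classes; so ``positivity for all classes'' is exactly the kind of claim that can fail and must be argued. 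As written, your proposal is a correct roadmap to Hwang's proof rather than a proof.
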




On the other hand, $\mathrm{Bl}_{\prj^1} \prj^n$ does not have a structure of a $\prj^1$-bundle, so the above theorems do not apply. Thus, we now focus on the toric structure of $\mathrm{Bl}_{\mathrm{pt}} \prj^n$. This was treated in \cite{abrrev} and \cite{raza}, which (amongst other results) re-established Calabi's theorem using toric methods. This is the approach that we follow for $\mathrm{Bl}_{\prj^1} \prj^n$, and will be discussed in greater detail in \S \ref{ovpfexmp1pn}.

\subsubsection{Nonexistence results} \label{ssecp1pnnonextres}

We mention several nonexistence results that seem to be particularly relevant to Problem \ref{blupprbhdmfd}. We mainly focus on the results related to the Donaldson--Tian--Yau programme \cite{dontoric, tian97, yauprob}, which relates the existence of cscK metrics in the first Chern class $c_1 (L)$ of an ample line bundle $L$ to notions of algebro-geometric stability of $(X,L)$. On the other hand, since a detailed exposition on this topic could be rather lengthy, we do \textit{not} discuss it in detail here and we only recall some relevant notions and results that we shall use later; the reader is referred to e.g.~\cite{dontoric, tian97} for more details. A \textbf{test configuration} for a polarised \kah manifold $(X,L)$, written $(\mathcal{X} , \mathcal{L})$, is a flat family $\pi : \mathcal{X} \to \cx$ over $\cx$ with an equivariant $\cx^*$-action lifting to the total space of a line bundle $\mathcal{L}$ such that $\pi^{-1} (1)$ is isomorphic to $(X,L^{\otimes r})$, and $r$ is called the exponent of the test configuration $(\mathcal{X} , \mathcal{L})$. We can define a rational number called the \textbf{Donaldson--Futaki invariant} $DF(\mathcal{X} , \mathcal{L})$ for each $(\mathcal{X} , \mathcal{L})$ as in \cite[\S 2.1]{dontoric}, and $(X,L)$ is said to be \textbf{$K$-semistable} if $DF(\mathcal{X} , \mathcal{L}) \ge 0$ for every test configuration.

A foundational result is the following.

\begin{theorem} \emph{(Donaldson \cite{donlb})} \label{donlbcsckkst}
$(X,L)$ is $K$-semistable if it admits a cscK metric in $c_1 (L)$.
\end{theorem}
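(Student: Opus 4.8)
The plan is to deduce $K$-semistability by showing that $DF(\mathcal{X}, \mathcal{L}) \ge 0$ for an arbitrary test configuration $(\mathcal{X}, \mathcal{L})$ of $(X, L)$, and to obtain this sign from a lower bound for the Calabi functional. Precisely, I would prove that for each test configuration there is a constant $C = C(\mathcal{X}, \mathcal{L}) > 0$ such that, for \emph{every} \kah metric $\omega \in c_1(L)$,
\begin{equation*}
\big\| \, S(\omega) - \hat{S} \, \big\|_{L^2(\omega)} \; \ge \; -\, C \cdot DF(\mathcal{X}, \mathcal{L}),
\end{equation*}
where $\hat{S}$ is the topological average of the scalar curvature. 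Granting this inequality, if $\omega \in c_1(L)$ is cscK then $S(\omega) \equiv \hat{S}$, the left-hand side is zero, and hence $DF(\mathcal{X}, \mathcal{L}) \ge 0$; as $(\mathcal{X}, \mathcal{L})$ was arbitrary, $(X, L)$ is $K$-semistable.

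To establish the inequality I would pass to Bergman metrics. For $k$ divisible by the exponent $r$ of $(\mathcal{X}, \mathcal{L})$, use $\omega$ to define the $L^2$-inner product on $H^0(X, L^k)$ and let $\iota_k : X \hookrightarrow \prj(H^0(X, L^k)^*) = \prj^{N_k}$ be the resulting embedding; the Tian--Yau--Zelditch--Catlin--Lu expansion of the Bergman kernel gives $\tfrac{1}{k}\iota_k^* \omega_{\fs} \to \omega$ in $C^\infty$. By flatness, $\dim H^0(\mathcal{X}_0, \mathcal{L}_0^{k/r}) = \dim H^0(X, L^k)$, so the $\cx^*$-action of the test configuration induces a one-parameter subgroup $\lambda_k : \cx^* \to \mathrm{GL}(N_k + 1)$, and $DF(\mathcal{X}, \mathcal{L})$ is, by definition, read off from the large-$k$ asymptotics of the total weight of $\lambda_k$ on $H^0(\mathcal{X}_0, \mathcal{L}_0^{k/r})$. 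On the finite-dimensional side, the $\mathrm{SU}(N_k+1)$-action on $\prj^{N_k}$ has a moment map $\mu_k$ (the centre-of-mass map), and a convexity lemma of Kempf--Ness type bounds the normalised norm of the integrated moment map $\mu_k(\iota_k(X))$ from below by the normalised Hilbert--Mumford weight of $\lambda_k$. Finally, the same Bergman-kernel expansion identifies, under the correct normalisation, $\lim_{k \to \infty}$ of the left-hand quantity with $\| S(\omega) - \hat{S} \|_{L^2(\omega)}$, while the weight asymptotics identify $\lim_{k\to\infty}$ of the right-hand quantity with a positive multiple of $-DF(\mathcal{X}, \mathcal{L})$; letting $k \to \infty$ in the finite-dimensional inequality yields the claim.

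The hard part is this last passage to the limit: making the comparison between the finite-dimensional centre-of-mass moment map on $\prj^{N_k}$ and the scalar curvature of $\omega$ precise and uniform in $k$, which is exactly where the full strength of the Bergman-kernel asymptotics is used, together with the bookkeeping needed to handle the exponent $r$ and to guarantee that the $\cx^*$-action on sections is genuinely defined for all large $k$. An alternative route avoids Bergman metrics: a test configuration determines a weak geodesic ray $(\phi_t)_{t \ge 0}$ with $\omega_{\phi_0} = \omega$, the Mabuchi $K$-energy is convex along it with asymptotic slope at most $DF(\mathcal{X}, \mathcal{L})$, and a cscK metric is a critical point of the $K$-energy, hence minimises it along the ray, so the slope is $\ge 0$ and therefore $DF(\mathcal{X}, \mathcal{L}) \ge 0$; here the difficulty is shifted to the regularity of weak geodesic rays and the proof of the slope inequality.
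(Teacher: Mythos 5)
This statement is quoted in the paper as an external result of Donaldson, with no proof supplied; your outline is a faithful sketch of Donaldson's actual argument in \emph{Lower bounds on the Calabi functional} (Bergman embeddings, the finite-dimensional $\mathrm{SU}(N_k+1)$ moment map and Kempf--Ness convexity, and the Tian--Yau--Zelditch--Catlin--Lu expansion identifying the limits with $\|S(\omega)-\hat S\|_{L^2}$ and $-DF$), so there is nothing to compare against in the paper itself. The only point to tidy is that the constant should be $C = c_n/\|(\mathcal{X},\mathcal{L})\|$ for a suitable norm of the test configuration, with configurations of vanishing norm treated separately; the geodesic-ray alternative you mention is also a valid (later) route.
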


A polarised \kah manifold $(X,L)$ that is not $K$-semistable is called \textbf{$K$-unstable}. Thus, Theorem \ref{donlbcsckkst} shows that we can prove the nonexistence of cscK metrics in $c_1 (L)$ by proving that $(X,L)$ is $K$-unstable. Note that we also have a version of Theorem \ref{donlbcsckkst} adapted to the extremal metrics (cf.~\cite[Theorem 1.4 or 2.3]{stosze}).


Proving $K$-instability is often possible by establishing a stronger statement, which is to prove \textit{slope instability} of $(X,L)$; the reader is referred to \S \ref{slo} for more details on this theory. Along this line, we recall the following result of Ross and Thomas. We follow their approach very closely in proving the slope instability of $\mathrm{Bl}_{\prj^1} \prj^n$ (cf.~Proposition \ref{mainthminstp1pn}).

\begin{theorem} \label{rtsistbblptpn}
\emph{(Ross--Thomas \cite[Examples 5.27, 5.35]{rt})}
$\mathrm{Bl}_{\mathrm{pt}} \prj^n$ is slope unstable with respect to any polarisation. In particular, it cannot admit a cscK metric in any rational \kah class.
\end{theorem}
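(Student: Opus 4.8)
The plan is to exhibit, for every polarisation, an explicitly destabilising subscheme in the sense of the Ross--Thomas slope theory reviewed in \S\ref{slo}; by their obstruction (a refinement of Theorem \ref{donlbcsckkst}) this rules out a cscK metric in the corresponding class, and slope instability also yields $K$-instability. Since $\mathrm{Pic}(\mathrm{Bl}_{\mathrm{pt}} \prj^n) = \itg H \oplus \itg E$ with $H := \pi^* c_1(\ocal_{\prj^n}(1))$ and $E$ the exceptional divisor, and since the ample cone is $\{ aH - bE : a > b > 0 \}$, it suffices to treat $L = \ocal(aH - bE)$ for coprime positive integers $a > b$ (slope stability being unchanged under $L \mapsto L^{\otimes m}$, this covers every rational polarisation); and because $\prj^n$ is homogeneous the blown-up point may be taken to be $[1:0:\dots:0]$.

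The destabilising subscheme will be the exceptional divisor $Z = E \cong \prj^{n-1}$ itself. Since $E$ is already a Cartier divisor, blowing it up changes nothing: $\mathrm{Bl}_E (\mathrm{Bl}_{\mathrm{pt}} \prj^n) = \mathrm{Bl}_{\mathrm{pt}} \prj^n$, the associated exceptional divisor is $E$ again, and $\mathcal{I}_E = \ocal(-E)$. First I would record the Seshadri constant: on $\mathrm{Bl}_{\mathrm{pt}} \prj^n$ the class $aH - cE$ is nef precisely for $0 \le c \le a$, so $\epsilon(E) = a - b$ relative to $L$. Then, for $x \in \rtn \cap (0, a - b)$ and $k$ sufficiently divisible, I would push forward to $\prj^n$ to compute the relevant Hilbert function:
\[
h^0 \bigl( \mathrm{Bl}_{\mathrm{pt}} \prj^n, \, k(aH - bE) - k x E \bigr) = h^0 \bigl( \prj^n, \, \ocal(ka) \otimes \mathcal{I}_{\mathrm{pt}}^{\, k(b+x)} \bigr) = \binom{ka + n}{n} - \binom{k(b+x) + n - 1}{n},
\]
the subtracted term being the number of linear conditions for a form of degree $ka$ to vanish to order $\ge k(b+x)$ at the point (since $b + x < a$ we are in the range where this count is exact, and the bundle is ample so $h^0 = \chi$ is polynomial for $k \gg 0$). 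Expanding the binomials yields the leading coefficients $a_0(x) = \bigl( a^n - (b+x)^n \bigr)/n!$ and the corresponding $k^{n-1}$-coefficient $a_1(x)$; in particular $a_0(x) \to 0$ as $x \to \epsilon(E)$.

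Finally I would substitute $a_0(x), a_1(x)$ (and their values at $x=0$, which give $\mu(\mathrm{Bl}_{\mathrm{pt}} \prj^n, L) = a_1(0)/2a_0(0)$) into the Ross--Thomas slope $\mu_c(\ocal_E, L)$ attached to the flag $(E, c)$, and check that the slope inequality required for semistability — namely $\mu_c(\ocal_E, L) \ge \mu(\mathrm{Bl}_{\mathrm{pt}} \prj^n, L)$, equivalently $\mu_c(\mathcal{I}_E, L) \le \mu(\mathrm{Bl}_{\mathrm{pt}} \prj^n, L)$ — is violated for $c$ in a subinterval of $(0, a - b]$; concretely, for $c$ close to the Seshadri constant, where $a_0(c)$ is small and the numerator of $\mu_c$ is dominated by a negative contribution. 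This being valid for every $a > b > 0$ shows $\mathrm{Bl}_{\mathrm{pt}} \prj^n$ is slope unstable with respect to all polarisations; since slope semistability is necessary for the existence of a cscK metric in $c_1(L)$, and every rational \kah class is a positive rational multiple of such a $c_1(L)$, no rational \kah class contains a cscK metric. The only genuine work is the elementary but slightly delicate bookkeeping of the $k^{n-1}$-coefficients and the verification of the final strict inequality uniformly in $a, b$; there is no conceptual obstacle here, which is precisely why this computation serves as the template for the $\mathrm{Bl}_{\prj^1} \prj^n$ case treated in Proposition \ref{mainthminstp1pn}.
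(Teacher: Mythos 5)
Your outline follows exactly the route the paper takes for this statement, which is simply to cite Ross--Thomas \cite[Examples 5.27, 5.35]{rt}: destabilise by the exceptional divisor $E$, compute the Seshadri constant $a-b$, push the sections forward to $\prj^n$ to get the Hilbert function, and compare $\mu_c(\mathcal{O}_E,L)$ with $\mu(X,L)$ near $c=\mathrm{Sesh}(E)$. This is also precisely the template the paper adapts in Proposition \ref{mainthminstp1pn}, and all the individual ingredients you state (the ample cone, $\mathrm{Bl}_E X\cong X$, the count $\binom{ka+n}{n}-\binom{k(b+x)+n-1}{n}$, the reduction of K-instability and non-existence of cscK metrics to slope instability via Theorems \ref{donlbcsckkst} and \ref{rtksssssssz}) are correct.

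The one real gap is that the decisive step --- the verification that $\mu_{c}(\mathcal{O}_E,L)<\mu(X,L)$ holds for some $c\in(0,a-b]$ \emph{uniformly in} $a>b>0$ --- is asserted rather than carried out, and this is where essentially all of the content lies: in the paper's analogue for $\mathrm{Bl}_{\prj^1}\prj^n$ this inequality occupies most of Section 3 and requires a genuinely careful case analysis in $\epsilon$. Your heuristic for why it should fail near the Seshadri constant is also slightly off: the coefficient that tends to $0$ as $x\to a-b$ is the leading term of $h^0(L^{\otimes k}\otimes\mathcal{I}_E^{xk})$, whereas the $\tilde a_0(x)=((b+x)^n-b^n)/n!$ entering the quotient slope tends to $a_0\neq 0$, so the denominator $\int_0^c\tilde a_0$ does not become small and the instability is detected by a finite comparison of two rational functions of $b/a$, not by a blow-up of $\mu_c$. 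Neither point invalidates the approach --- the inequality is true and is checked in \cite{rt} --- but as written the proposal stops just short of the proof.
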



On the other hand, in some cases it is still possible to show $K$-instability directly, without proving slope instability, as in the following theorem due to Della Vedova \cite{dv}. They can be regarded as an extension of Stoppa's results \cite{stobl,stokst} to blowing up higher dimensional submanifolds. By defining the notion of ``Chow stability'' for subschemes inside a general polarised \kah manifold (cf.~\cite[Definition 3.5]{dv}), he proved the following by showing the $K$-instability of the blowup.

\begin{theorem}
\emph{(Della Vedova \cite[Theorem 1.5]{dv})}
Let $(X,L)$ be a polarised \kah manifold with a cscK metric in $c_1 (L)$. Let $Z_1 , \dots , Z_s$ be pairwise disjoint submanifolds of codimension greater than two, and let $\pi: \hat{X} \to X$ be the blowup of $X$ along $Z_1 \cup \dots \cup Z_s$ with $E_j$ being the exceptional divisor over $Z_j$. Define the subscheme $Z$ by the ideal sheaf $\mathcal{I}_Z := \mathcal{I}^{m_1}_{Z_1} \cap \dots \cap \mathcal{I}^{m_s}_{Z_s}$ for $m_1 , \dots , m_s \in \mathbb{N}$.

If $Z \inj X$ is Chow unstable, then the class $\pi^* c_1 (L) - \epsilon \sum_{j=1}^s m_j c_1([E_j])$ contains no cscK metrics for $0< \epsilon \ll 1$.
\end{theorem}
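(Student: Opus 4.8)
The plan is to argue by contradiction using the Donaldson--Tian--Yau correspondence in the direction quoted above. Suppose that, for a sequence of positive rationals $\epsilon_i \to 0$, the class $\pi^* c_1(L) - \epsilon_i \sum_{j=1}^s m_j c_1([E_j])$ contains a cscK metric. After clearing denominators this class is $c_1(\hat{L}_{\epsilon_i})$ for an ample line bundle $\hat{L}_{\epsilon_i}$ on $\hat{X}$ — ampleness of $\pi^* L - \epsilon \sum_j m_j [E_j]$ for $0 < \epsilon \ll 1$ being standard, e.g.\ via Nakai--Moishezon applied over the $Z_j$ — so by Theorem \ref{donlbcsckkst} the polarised manifold $(\hat{X}, \hat{L}_{\epsilon_i})$ is $K$-semistable. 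It therefore suffices to produce, for every sufficiently small rational $\epsilon > 0$, a test configuration for $(\hat{X}, \hat{L}_\epsilon)$ with strictly negative Donaldson--Futaki invariant.

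The destabilising test configuration is built by combining a $\cx^*$-degeneration of the pair $(X,Z)$ with a relative blow-up. Chow instability of $Z \inj X$ in Della Vedova's sense furnishes a test configuration $(\mathcal{X}, \mathcal{L})$ for $(X, L)$ — which, since $X$ carries a cscK metric and hence $DF(\mathcal{X}, \mathcal{L}) \ge 0$ for every test configuration, may and must be chosen with $DF(\mathcal{X}, \mathcal{L}) = 0$ (for instance a product configuration attached to a one-parameter subgroup of $\mathrm{Aut}(X)$, which is exactly what happens for $X = \prj^n$) — together with a $\cx^*$-invariant closed subscheme $\mathcal{Z} \subset \mathcal{X}$, flat over $\cx$, whose fibre over $t \ne 0$ is $Z \subset X$ and whose associated Chow weight along the destabilising parameter $\lambda$ is negative. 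Now blow up $\mathcal{X}$ along the ideal sheaf $\mathcal{I}_{\mathcal{Z}} := \bigcap_j \mathcal{I}_{\mathcal{Z}_j}^{m_j}$, obtaining $\rho : \hat{\mathcal{X}} := \mathrm{Bl}_{\mathcal{Z}} \mathcal{X} \to \mathcal{X}$ with exceptional divisor $\mathcal{E} = \sum_j \mathcal{E}_j$. The $\cx^*$-action lifts, $\hat{\mathcal{X}} \to \cx$ is flat, its general fibre is $\mathrm{Bl}_Z X \cong \hat{X}$, and $\hat{\mathcal{L}}_\epsilon := \rho^* \mathcal{L} - \epsilon \mathcal{E}$ is ample for $0 < \epsilon \ll 1$ rational, so $(\hat{\mathcal{X}}, \hat{\mathcal{L}}_\epsilon)$ is a test configuration for $(\hat{X}, \pi^* L - \epsilon \sum_j m_j c_1([E_j]))$ (up to a fixed power of $L$).

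The heart of the matter is the asymptotic expansion of $DF(\hat{\mathcal{X}}, \hat{\mathcal{L}}_\epsilon)$ as $\epsilon \to 0$. Expanding the total $\cx^*$-weight and the dimension of the spaces of sections of powers of $\hat{\mathcal{L}}_\epsilon$ over the central fibre, first in the exponent and then in $\epsilon$, and using that blowing up along a smooth centre $Z_j$ of codimension $c_j := \codim_X Z_j$ perturbs the relevant intersection numbers only at order $\epsilon^{c_j}$ and higher (so that, via $K_{\hat{\mathcal{X}}} = \rho^* K_{\mathcal{X}} + \sum_j (c_j - 1)\mathcal{E}_j$, the Donaldson--Futaki invariant is perturbed first at order $\epsilon^{c_j - 1}$ — this is where $\codim > 2$ enters, keeping $c_j - 1 \ge 2$ and us out of the slope-stability regime), one obtains an expansion
\[
DF(\hat{\mathcal{X}}, \hat{\mathcal{L}}_\epsilon) \;=\; DF(\mathcal{X}, \mathcal{L}) \;+\; \sum_{j=1}^{s} a_j \, \epsilon^{\, c_j - 1}\, w_j \;+\; (\text{higher order in } \epsilon),
\]
where each $a_j > 0$ is a universal constant and $w_j$ is the Chow weight of $Z_j$ along $\lambda$; the hypothesis that $Z \inj X$ is Chow unstable forces the term of lowest order in $\epsilon$ on the right-hand side to be strictly negative. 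Since $DF(\mathcal{X}, \mathcal{L}) = 0$, this yields $DF(\hat{\mathcal{X}}, \hat{\mathcal{L}}_\epsilon) < 0$ for all sufficiently small rational $\epsilon > 0$, contradicting the $K$-semistability of $(\hat{X}, \hat{L}_\epsilon)$ and proving the theorem.

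I expect the main obstacle to be precisely the intersection-theoretic bookkeeping behind that last expansion: one must (i) identify the coefficient of the leading power of $\epsilon$ with the Chow weight of $Z$ on the nose, with the correct normalisation relative to $X$ — this is exactly the content that makes Della Vedova's notion of Chow (in)stability of a subscheme the right one — and (ii) verify that the base configuration $(\mathcal{X}, \mathcal{L})$ can genuinely be taken with $DF = 0$ in the needed generality, so that the negative $\epsilon$-correction is not dominated; for a general cscK $X$ without automorphisms this is a real constraint (and limits the scope of the statement), whereas for $X = \prj^n$ it is automatic since the Fubini--Study metric is \ke and $\mathrm{Aut}(\prj^n)$ is large. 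The remaining points — flatness of $\hat{\mathcal{X}} \to \cx$, ampleness of $\hat{\mathcal{L}}_\epsilon$ including on the central fibre, and the restriction to rational $\epsilon$ in order to invoke Theorem \ref{donlbcsckkst} — are routine.
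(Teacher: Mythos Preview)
The paper does not contain a proof of this theorem. It is quoted as a result of Della Vedova \cite[Theorem 1.5]{dv} in the survey portion (\S \ref{ssecp1pnnonextres}) on nonexistence results, accompanied only by the one-line remark that ``he proved the following by showing the $K$-instability of the blowup.'' There is therefore nothing in the present paper to compare your argument against.

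That said, your sketch is faithful to that one-line description and to the structure of Della Vedova's original argument: produce, from a one-parameter subgroup witnessing Chow instability of $Z$, a test configuration for the blowup $(\hat X,\hat L_\epsilon)$ by blowing up the induced family along the family of $Z$'s, and expand $DF(\hat{\mathcal X},\hat{\mathcal L}_\epsilon)$ in $\epsilon$ to see that the leading nontrivial coefficient is (a positive multiple of) the Chow weight of $Z$. Two points of your write-up would need tightening before it could stand as a proof. First, the precise order in $\epsilon$ at which the correction enters, and the exact identification of its coefficient with Della Vedova's Chow weight (his \cite[Definition 3.5]{dv}), is the actual content of the theorem; your formula $\epsilon^{c_j-1}$ and the ``universal constant $a_j$'' are asserted rather than derived, and getting the normalisations right is where the work lies. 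Second, your handling of the base configuration is slightly off: the hypothesis that $X$ is cscK guarantees $DF(\mathcal X,\mathcal L)\ge 0$ for every test configuration, and Della Vedova's definition of Chow instability is arranged precisely so that the destabilising one-parameter subgroup can be taken inside $\mathrm{Aut}(X,L)$, yielding a product configuration with $DF=0$; this is not a ``real constraint'' on $X$ but part of the definition of Chow instability relative to $(X,L)$.
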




Della Vedova also proved an analogous statement for the extremal metrics \cite[Theorem 1.7]{dv}, by defining ``relative Chow stability'' for subschemes inside a general polarised \kah manifold. See Examples 1.6 and 1.11 in \cite{dv} for explicit examples in which these results are used.

\begin{remark}
Recalling Theorem \ref{thmlsautxlcxham}, we now ask whether the automorphism group of $X=\mathrm{Bl}_{\prj^1} \prj^n$ is reductive. It is easy to see that the Lie algebra $\mathfrak{aut} (X)$ of $\mathrm{Aut} (X)$ is equal to the Lie subalgebra $\mathfrak{h}$ of $\mathfrak{sl} (n+1, \cx)$ consisting of matrices of the form $\begin{pmatrix}
A & B \\
0 &C
\end{pmatrix}$ where $A,B,C$ are matrices of size $2 \times 2$, $2 \times (n-1)$, $(n-1) \times (n-1)$, respectively. Note that $X$ is Fano, and $\mathfrak{aut} (X) = \mathfrak{aut} (X,  -K_X)$. Note also that $\mathfrak{aut} (X,  -K_X) \cong \mathfrak{aut} (X,  L)$ for any ample line bundle $L$, cf.~\cite{kobayashi, lebsim}.

It is easy to see that the centre of $\mathfrak{h}$ is trivial, and hence $\mathfrak{h}$ is reductive if and only if it is semisimple. In principle this can be checked e.g.~by Cartan's criterion using the Killing form, although in practice it may be a nontrivial task. We can still prove that $\mathfrak{h}$ is not semisimple, and hence nonreductive, as follows. Theorem \ref{intstmresp1pn} shows that we have a non-cscK extremal metric in the polarisation $L:= \pi^* \mathcal{O}_{\prj^n} (1) - \epsilon [E]$ if $\epsilon >0$ is sufficiently small. This means that the Futaki invariant (cf.~\cite{futaki}) evaluated against the extremal vector field is not zero \cite[Lemma 1]{lebsim}. However, since the Futaki invariant is a Lie algebra character \cite[Corollary 2.2]{futaki}, this means that $\mathfrak{h} \cong \mathfrak{aut} (X,  L)$ cannot be semisimple. We thus conclude that $\mathfrak{h}$ is not reductive.

\end{remark}

\section{Some technical backgrounds}

We briefly recall slope stability in \S \ref{slo}, and toric \kah geometry in \S \ref{tor}. The aim of these sections is to fix the notation and recall some key facts; the reader is referred to the literature cited in each section for more details.

\subsection{Slope stability} \label{slo}
For the details of what is discussed in the following, the reader is referred to the paper \cite{rt} by Ross and Thomas.

Let $(X,L)$ be a polarised \kah manifold. Then for $k \gg 1$,
\begin{equation*}
\dim H^0 (X,  L^{\otimes k} ) = a_0 k^n + a_1 k^{n-1} + O(k^{n-2}) .
\end{equation*}

Now let $Z$ be a subscheme of $X$. The \textbf{Seshadri constant} $\mathrm{Sesh} (Z)$ for $Z \subset X$ (with respect to $L$) can be defined as follows. Considering the blowup $\pi : \mathrm{Bl}_Z X \to X$ with the exceptional divisor $E$, we define
\begin{equation*}
\textup{Sesh} (Z)= \mathrm{Sesh} (Z,X,L) := \sup \{ c \mid \pi^* L- cE \text{ is ample on } \mathrm{Bl}_Z X \} .
\end{equation*}
Then, writing $\mathcal{I}_Z$ for the ideal sheaf defining $Z$, we compute
\begin{equation*}
\dim H^0 (X,  L^{\otimes k} /(  L^{\otimes k} \otimes \mathcal{I}_{Z}^{xk} ) )= \tilde{a}_0 (x) k^n + \tilde{a}_1 (x) k^{n-1} + O(k^{n-2}) 
\end{equation*}
for $k \gg 1$ and $x \in \rtn$ such that $kx \in \mathbb{N}$. It is well-known that $\tilde{a}_i (x)$ is a polynomial in $x$ of degree at most $n-i$, and hence can be extended as a continuous function on $\rl$ (cf.~\cite[\S 3]{rt}).

\begin{definition}
The \textbf{slope} of $(X,L)$ is defined by $\mu (X, L):= a_1 / a_0 $, and the \textbf{quotient slope} of $Z$ with respect to $c \in \rl$ is defined by
\begin{equation*}
\mu_c (\mathcal{O}_Z , L) := \frac{\int_0^c \left(  \tilde{a}_1 (x) + \tilde{a}_0 (x) / 2   \right) dx}{\int_0^c  \tilde{a}_0 (x) dx} .
\end{equation*}
\end{definition}

\begin{definition} \label{defslstssustqsl}
$(X,L)$ is said to be \textbf{slope semistable with respect to} $Z$ if $\mu (X,L) \le \mu_c (\mathcal{O}_Z , L)$ for all $c \in (0, \textup{Sesh} (Z)]$. $(X,L)$ is said to be \textbf{slope semistable} if it is slope semistable with respect to all subschemes $Z$ of $X$. $(X,L)$ is \textbf{slope unstable} if it is not slope semistable.
\end{definition}

We remark that, since $X$ is a manifold, the slope can be computed by the Hirzebruch--Riemann--Roch theorem as
\begin{equation}  \label{comprsslope}
\mu (X,L) = - \frac{n \int_X c_1 (K_X) c_1 (L)^{n-1}}{2 \int_X c_1 (L)^n }.
\end{equation}

The quotient slope can also be computed in terms of Chern classes when $\mathrm{Bl}_Z X$ is smooth, by noting $\pi_* \mathcal{O}_{\textup{Bl}_Z X} (-jE) = \mathcal{I}_Z^j$ for $\pi : \mathrm{Bl}_Z X \to X$ and $j \ge 0$ and again using Hirzebruch--Riemann--Roch. It takes a particularly neat form when $Z$ is a divisor in $X$.
\begin{theorem} \emph{(Ross--Thomas \cite[Theorem 5.2]{rt})} \label{sldiv}
Let $Z$ be a divisor in $X$. Then
\begin{equation}  \label{comprsqslope}
\mu_c (\mathcal{O}_Z, L) = \frac{n \left( L^{n-1}. Z - \sum_{j=1}^{n-1} \binom{n-1}{j} \frac{(-c)^j}{j+1} L^{n-1-j} .Z^j . (K_X  + Z) \right)}{2\sum_{j=1}^{n} \binom{n}{j} \frac{(-c)^j}{j+1} L^{n-j} .Z^j},
\end{equation}
where the dot stands for the intersection product (cf.~Notation \ref{notadtnpdnfisp}), by identifying line bundles with corresponding divisors.
\end{theorem}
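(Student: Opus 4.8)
The statement is the closed formula \eqref{comprsqslope} for $\mu_c(\mathcal{O}_Z,L)$ when $Z$ is a divisor, so the plan is to compute the coefficients $\tilde a_0(x)$ and $\tilde a_1(x)$ that enter its definition and then to substitute. Since $Z$ is an effective Cartier divisor, the blowup $\pi:\mathrm{Bl}_Z X\to X$ is an isomorphism, the exceptional divisor is $Z$ itself, and $\mathcal{I}_Z^{m}=\mathcal{O}_X(-mZ)$ for every $m\ge 0$; hence the sheaf whose space of global sections defines the $\tilde a_i$ is $L^{\otimes k}\otimes\big(\mathcal{O}_X/\mathcal{O}_X(-xkZ)\big)$, i.e.~the restriction of $L^{\otimes k}$ to the $(xk)$-th order thickening $xkZ$ of $Z$.

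First I would pass from counting sections to Euler characteristics. Tensoring $0\to\mathcal{O}_X(-xkZ)\to\mathcal{O}_X\to\mathcal{O}_{xkZ}\to 0$ with $L^{\otimes k}$, and using that $L-xZ$ is ample for $0<x<\mathrm{Sesh}(Z)$, so that $H^i(X,kL)=H^i(X,k(L-xZ))=0$ for all $i\ge 1$ and $k\gg 0$ by Serre vanishing, one obtains
\[
\dim H^0\big(X,L^{\otimes k}/(L^{\otimes k}\otimes\mathcal{I}_Z^{xk})\big)=\chi\big(X,\mathcal{O}_X(kL)\big)-\chi\big(X,\mathcal{O}_X(kL-xkZ)\big)
\]
for $k\gg 0$; as both sides are polynomial in $k$ (for $kx\in\mathbb{N}$) with coefficients polynomial in $x$, this holds as an identity of polynomials in $x$. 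Applying Hirzebruch--Riemann--Roch on $X$ and reading off the coefficients of $k^n$ and $k^{n-1}$ then gives
\[
\tilde a_0(x)=\tfrac1{n!}\big(L^n-(L-xZ)^n\big),\qquad \tilde a_1(x)=-\tfrac1{2(n-1)!}\big(L^{n-1}-(L-xZ)^{n-1}\big).K_X,
\]
in the notation of Notation \ref{notadtnpdnfisp}. (When $Z$ is smooth one may alternatively restrict to $Z$ via the filtration $\mathcal{I}_Z^j/\mathcal{I}_Z^{j+1}\cong\mathcal{O}_Z(-jZ)$, apply Riemann--Roch on $Z$ together with adjunction $K_Z=(K_X+Z)|_Z$, and convert the sum over $j=0,\dots,xk-1$ to an integral by Euler--Maclaurin; this makes transparent why the factor $K_X+Z$, rather than $K_X$, appears in \eqref{comprsqslope}.)

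Finally I would substitute these into the defining ratio of $\mu_c(\mathcal{O}_Z,L)$, expand $(L-xZ)^n$ and $(L-xZ)^{n-1}$ by the binomial theorem, integrate term by term using $\int_0^c x^j\,dx=c^{j+1}/(j+1)$, and reorganize. The denominator collapses to $\sum_{j=1}^n\binom nj\frac{(-c)^j}{j+1}L^{n-j}.Z^j$ once the $j=0$ term is seen to cancel. For the numerator, the elementary identity $L^{n-1}.Z-(L-xZ)^{n-1}.Z=(n-1)\int_0^x(L-sZ)^{n-2}.Z^2\,ds$ — a form of the fundamental theorem of calculus, trading one factor of $(L-xZ)$ for a factor of $Z$ — allows the $K_X$-term of $\tilde a_1$ to be recombined with half of $\tilde a_0$ into the factor $K_X+Z$; together with the binomial identities $\tfrac1n\binom nj=\tfrac1j\binom{n-1}{j-1}$, $\binom nj=\binom{n-1}{j}+\binom{n-1}{j-1}$ and suitable index shifts, the numerator assembles into $L^{n-1}.Z-\sum_{j=1}^{n-1}\binom{n-1}{j}\frac{(-c)^j}{j+1}L^{n-1-j}.Z^j.(K_X+Z)$, which is \eqref{comprsqslope}.

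Since the geometric ingredients — ampleness of $L-xZ$ on the Seshadri interval, Riemann--Roch, and adjunction — are all standard, I expect the only real obstacle to be combinatorial bookkeeping: arranging the index shifts so that the double sums coming from the two integrations collapse into the single binomial sums of \eqref{comprsqslope}, and, along the Riemann--Roch-on-$Z$ route, tracking the lower-order Euler--Maclaurin corrections (which modify $\tilde a_1$ but not $\tilde a_0$) and verifying that they combine with the adjunction term to produce precisely the factor $K_X+Z$.
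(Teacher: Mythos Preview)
The paper does not give its own proof of this theorem; it is quoted from Ross--Thomas \cite[Theorem 5.2]{rt}, preceded only by the one-line indication that ``the quotient slope can also be computed in terms of Chern classes \ldots\ by noting $\pi_*\mathcal O_{\mathrm{Bl}_Z X}(-jE)=\mathcal I_Z^j$ \ldots\ and again using Hirzebruch--Riemann--Roch.'' Your proposal is precisely an elaboration of that hint in the divisor case: since the blowup along a Cartier divisor is an isomorphism, one applies Serre vanishing and Riemann--Roch directly on $X$ to obtain $\tilde a_0(x)$ and $\tilde a_1(x)$, then integrates and rearranges. This is the standard argument and matches both the paper's hint and the proof in the cited source, so there is essentially nothing to compare.

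One minor simplification: the recombination producing $K_X+Z$ is more direct than your last paragraph suggests. Once you have
\[
\tilde a_1(x)+\tfrac12\,\tilde a_0'(x)=\tfrac1{2(n-1)!}\Bigl(-(L^{n-1}-(L-xZ)^{n-1}).K_X+(L-xZ)^{n-1}.Z\Bigr),
\]
simply add and subtract $L^{n-1}.Z$ to rewrite the bracket as $L^{n-1}.Z-(L^{n-1}-(L-xZ)^{n-1}).(K_X+Z)$; expanding $(L-xZ)^{n-1}$ binomially and integrating termwise then gives \eqref{comprsqslope} with no index shifts, no fundamental-theorem-of-calculus trick, and no Euler--Maclaurin bookkeeping. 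Be aware, too, that the paper's displayed definition of $\mu_c$ has $\tilde a_0(x)/2$ in the numerator integrand, whereas the Ross--Thomas definition --- which is the one actually consistent with \eqref{comprsqslope} --- has the derivative $\tilde a_0'(x)/2$; you should work with the latter.
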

A fundamental theorem of Ross and Thomas is the following.
\begin{theorem} \emph{(Ross--Thomas \cite[Theorem 4.2]{rt})} \label{rtksssssssz}
If $(X,L)$ is $K$-semistable, then it is slope semistable with respect to any smooth subscheme $Z$.
\end{theorem}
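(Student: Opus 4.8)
The plan is to produce, for each rational $c$ with $0 < c < \mathrm{Sesh}(Z)$, an explicit test configuration for $(X,L)$ — the \emph{deformation to the normal cone} of $Z$ — whose Donaldson--Futaki invariant has the same sign as $\mu_c(\ocal_Z, L) - \mu(X,L)$, so that $K$-semistability forces $\mu(X,L) \le \mu_c(\ocal_Z, L)$; the endpoint $c = \mathrm{Sesh}(Z)$ then follows since $c \mapsto \mu_c(\ocal_Z, L)$ is continuous (indeed a rational function) in $c$.

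First I would set up the family. Let $\mathcal{X} := \mathrm{Bl}_{Z \times \{0\}}(X \times \cx)$, with the natural flat projection $\mathcal{X} \to \cx$ and the $\cx^*$-action induced by scaling the $\cx$-factor. Since $Z$ is smooth, $\mathcal{X}$ is irreducible and flat over $\cx$; its general fibre is $X$, and its central fibre is $\tilde{X} \cup P$, where $\tilde{X} \cong \mathrm{Bl}_Z X$ is the strict transform of $X \times \{0\}$ and $P = \prj(N_{Z/X} \oplus \ocal_Z)$ is the exceptional divisor, the two glued along $E = \prj(N_{Z/X})$. Writing $p : \mathcal{X} \to X$ for the composite with the first projection and $c = a/b$ in lowest terms, put $\mathcal{L}_c := p^* L^{\otimes b} \otimes \ocal_{\mathcal{X}}(-aP)$. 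Since $\ocal_{\mathcal{X}}(-P)$ is trivial on the general fibre and restricts to $\ocal_{\mathrm{Bl}_Z X}(-E)$ on $\tilde{X}$, relative ampleness of $\mathcal{L}_c$ amounts exactly to ampleness of $\pi^* L - cE$ on $\mathrm{Bl}_Z X$, which holds for $c < \mathrm{Sesh}(Z)$ by definition of the Seshadri constant. Thus $(\mathcal{X}, \mathcal{L}_c)$ is a genuine test configuration of exponent $b$.

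Next I would compute $DF(\mathcal{X}, \mathcal{L}_c)$. Two routes are available: directly, by applying $\cx^*$-equivariant Riemann--Roch to the central fibre and expanding both $\dim H^0(\mathcal{X}_0, \mathcal{L}_{c,0}^{\otimes k})$ and the total weight of the $\cx^*$-action on this space in powers of $k$; or, more geometrically, via the intersection-theoretic formula for $DF$ on the natural $\prj^1$-compactification $\bar{\mathcal{X}} \to \prj^1$ of $\mathcal{X}$. In either case the crucial input is $\pi_* \ocal_{\mathrm{Bl}_Z X}(-jE) = \mathcal{I}_Z^j$ for $j \ge 0$, which converts all the relevant Euler characteristics on $\mathcal{X}_0$ into the numbers $\dim H^0(X, L^{\otimes k}/(L^{\otimes k} \otimes \mathcal{I}_Z^{xk}))$, hence into the polynomials $a_0, a_1, \tilde a_0(x), \tilde a_1(x)$. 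Carrying this out yields
\begin{equation*}
DF(\mathcal{X}, \mathcal{L}_c) \;=\; (\text{positive const}) \cdot \left( a_0 \int_0^c \Big( \tilde a_1(x) + \tfrac{1}{2}\tilde a_0(x) \Big)\, dx \;-\; a_1 \int_0^c \tilde a_0(x)\, dx \right),
\end{equation*}
which, as $a_0 > 0$ and $\int_0^c \tilde a_0(x)\, dx > 0$, has the same sign as $\mu_c(\ocal_Z, L) - \mu(X, L)$.

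Finally, $K$-semistability gives $DF(\mathcal{X}, \mathcal{L}_c) \ge 0$, hence $\mu(X,L) \le \mu_c(\ocal_Z, L)$, for every rational $c \in (0, \mathrm{Sesh}(Z))$; letting $c$ vary and using continuity of $c \mapsto \mu_c(\ocal_Z, L)$, the inequality persists for all real $c \in (0, \mathrm{Sesh}(Z)]$ — which is exactly slope semistability of $(X,L)$ with respect to $Z$. The step I expect to be the main obstacle is the Donaldson--Futaki computation: one must control $\mathcal{X}_0$ and the $\cx^*$-weights precisely enough to extract the $k^n$- and $k^{n-1}$-coefficients without error, and this is exactly where smoothness of $Z$ is used — it guarantees flatness of $\mathcal{X} \to \cx$, the clean description $P = \prj(N_{Z/X} \oplus \ocal_Z)$, and the identity $\pi_* \ocal(-jE) = \mathcal{I}_Z^j$ — after which the remaining difficulty is the bookkeeping that converts equivariant cohomology on $\mathcal{X}_0$ into the slope data.
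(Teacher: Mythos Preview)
Your proposal is correct and follows exactly the Ross--Thomas argument via deformation to the normal cone. Note, however, that the paper itself does not supply a proof of this statement: it is quoted as \cite[Theorem 4.2]{rt} and used as a black box, so there is no ``paper's own proof'' to compare against --- your outline is essentially a sketch of the original Ross--Thomas proof.
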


\begin{remark}
Slope stability is strictly weaker than $K$-stability; the blowup of $\prj^2$ at two distinct points with the anticanonical polarisation is $K$-unstable, and yet slope stable \cite[Example 7.6]{pr}.
\end{remark}



\subsection{Toric K\"ahler geometry} \label{tor}
In addition to the original papers cited below, we mention \cite{abrrev, dontor} and Chapters 27-29 of \cite{cds} as particularly useful reviews on the details of what is discussed in this section.

We first of all demand that the symplectic form $\omega$ on $X$ be fixed throughout in this section. Recall that an action of a group $G$ on a manifold $X$ is called \textbf{effective} if for each $g \in G$, $g \neq \mathrm{id}_G$, there exists $x \in X$ such that $g \cdot x \neq x$. We first define a toric symplectic manifold, by regarding a \kah manifold $(X , \omega)$ merely as a symplectic manifold.

\begin{definition}
A \textbf{toric symplectic manifold} is a symplectic manifold $(X , \omega)$ equipped with an effective Hamiltonian action of an $n$-torus $T^n := \rl^n / 2 \pi \itg^n$ with a corresponding moment map $m : X \to \rl^n$.
\end{definition}

\begin{remark}
Recall that a \textbf{moment map} for the action $T^n \actson X$ is a $T^n$-invariant map $m : X \to \mathrm{Lie} (T^n)^* \cong \rl^n$ such that $\iota (v) \omega = -d \langle m ,v \rangle$ for all $v \in \mathrm{Lie}(T^n)$. A $T^n$-action is called \textbf{Hamiltonian} if there exists a moment map for the action.
\end{remark}

A theorem due to Atiyah \cite{ati}, and Guillemin and Sternberg \cite{gs} states that the image of the moment map $m$ is the convex hull of the images of the fixed points of the Hamiltonian torus action. For a toric symplectic manifold, it is a particular type of convex polytope called a \textbf{Delzant polytope}. Delzant \cite{del} showed that we have a one-to-one correspondence between a Delzant polytope $\mathcal{P}$ and a toric symplectic manifold $T^n \actson (X , \omega)$; Delzant polytopes are \textit{complete invariants} of toric symplectic manifolds. This allows us to confuse a toric symplectic manifold with its associated Delzant polytope $\mathcal{P}$, which is often called the \textbf{moment polytope}.

It is well-known that on (the preimage inside $X$ of) the interior $\mathcal{P}^{\circ}$ of the moment polytope $\mathcal{P}$, the $T^n$-action is free and we have a coordinate chart $\left\{ (x, y)= (x_1 , \dots , x_n , y_1 , \dots y_n) \in \mathcal{P}^{\circ} \times  (\rl^n / 2 \pi \itg^n)  \right\}$, called \textbf{action-angle coordinates}, on $m^{-1} (\mathcal{P}^{\circ})$. Action coordinates $(x_1 , \dots , x_n)$ are also called \textbf{momentum coordinates}. In action-angle coordinates, the symplectic form can be written as $\omega = \sum_{j=1}^n dx_j \wedge d y_j$ and the moment map can be given by $m (x,y) = x$.

We now consider a complex structure on $X$ to endow $(X , \omega)$ with a \kah structure; the reader is referred, for example, to \cite[\S 3 ]{abrrev} or \cite[\S 2]{dontor} for more details. We first recall that the way we construct $T^n \actson (X , \omega)$ from $\mathcal{P}$ \cite{del} shows that any toric symplectic manifold automatically admits a $T^n$-invariant \textit{complex} structure compatible with $\omega$; a toric symplectic manifold is automatically a toric \textit{K\"ahler} manifold. Let $\mathcal{S}_n$ be the \textbf{Siegel upper half space} consisting of complex symmetric $n \times n$ matrices of the form $Z = R + \ai S$ where $R$ and $S$ are real symmetric matrices and $S$ is in addition assumed to be positive definite. It is known that $\mathcal{S}_n$ is isomorphic to $Sp (2n , \rl) / U(n)$, and that $\mathcal{S}_n$ bijectively corresponds to the set $\mathcal{J}(\rl^{2n}, \omega_{\textup{std}})$ of all complex structures on $\rl^{2n}$ which are compatible with its standard symplectic form $\omega_{\textup{std}}$. It follows that in the action-angle coordinates on $m^{-1} (\mathcal{P}^{\circ})$, by taking a Darboux chart, any almost complex structure $J$ on $(X, \omega)$ can be written as
\begin{equation*}
J = \begin{pmatrix}
- S^{-1} R & -S^{-1} \\ RS^{-1} R+S & RS^{-1}
\end{pmatrix} .
\end{equation*}
If we assume that $J$ is $T^n$-invariant, we can make $R$, $S$ depend only on the action coordinates $x$. Moreover, by a Hamiltonian action generated by a function $f(x)$, given infinitesimally as $y_j \mapsto y_j + \frac{\partial f}{\partial x_j} (x)$, we may choose $R=0$. Furthermore, if we choose $J$ to be integrable, we can show that there exists a potential function $s (x)$ of $S$ such that $S_{ij} = \frac{\partial^2 s}{ \partial x_i \partial x_j} (x)$. Such $s(x)$ is called a \textbf{symplectic potential}. Guillemin \cite{guil} showed that we can define a canonical complex structure, or canonical symplectic potential on $(X, \omega)$, from the data of the moment polytope $\mathcal{P}$. 

\begin{theorem} \emph{(Guillemin \cite{guil})} \label{guilbd}
Suppose that $\mathcal{P}$ has $d$ facets (i.e. codimension 1 faces) which are defined by the vanishing of affine functions $l_i : \rl^n \ni x  \mapsto l_i (x) := \langle x, \nu_i \rangle -\lambda_i \in \rl$, $i = 1, \dots , d$, where $\nu_i \in \itg^n$ is a primitive inward-pointing normal vector to the $i$-th facet and $\lambda_i \in \rl$. Then in the action-angle coordinates on $m^{-1} (\mathcal{P}^{\circ})$, the canonical symplectic potential $s_{\mathcal{P}} (x)$ is given by
\begin{equation*}
s_{\mathcal{P}} (x) := \frac{1}{2} \sum_{i=1}^d l_i (x) \log l_i (x) .
\end{equation*}
\end{theorem}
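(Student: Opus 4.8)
The plan is to verify directly that the function $s_{\mathcal{P}}(x) = \frac{1}{2}\sum_{i=1}^d l_i(x)\log l_i(x)$ is a legitimate symplectic potential on $\mathcal{P}^\circ$ and that the associated \kah metric extends smoothly across the facets of $\mathcal{P}$, so that it genuinely defines a toric \kah structure on the compact manifold $(X,\omega)$ rather than merely on the open dense set $m^{-1}(\mathcal{P}^\circ)$. First I would check the interior conditions: a short computation gives $\frac{\partial^2 s_{\mathcal{P}}}{\partial x_i \partial x_j}(x) = \frac{1}{2}\sum_{k=1}^d \frac{(\nu_k)_i (\nu_k)_j}{l_k(x)}$, and since the $\nu_k$ positively span $\rl^n$ (a consequence of $\mathcal{P}$ being a bounded polytope, in fact of Delzant's conditions), this Hessian $S(x)$ is positive definite on $\mathcal{P}^\circ$; hence $s_{\mathcal{P}}$ is strictly convex there and defines, via the formula for $J$ recalled above with $R=0$, an integrable $T^n$-invariant complex structure compatible with $\omega$.

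The main content, and the step I expect to be the real obstacle, is the boundary analysis: one must show that the complex structure obtained from $s_{\mathcal{P}}$ is exactly the one that makes $m^{-1}(\mathcal{P}^\circ)$ extend to the smooth compact toric variety $X$ determined by $\mathcal{P}$. The standard route is to work facet by facet. Near a vertex $v$ of $\mathcal{P}$, the Delzant condition lets us choose an $SL(n,\itg)$ change of momentum coordinates so that the $n$ facets through $v$ become the coordinate hyperplanes $\{x_i = 0\}$; the potential then decomposes as $s_{\mathcal{P}} = \frac{1}{2}\sum_{i=1}^n x_i \log x_i + (\text{smooth})$, and the first piece is precisely Guillemin's model potential for $\cx^n$ with the standard flat structure (one checks this by passing to the complex coordinates $z_i = \sqrt{x_i}\,e^{\sqrt{-1}\,y_i}$, or equivalently by computing the symplectomorphism to a Euclidean Darboux chart). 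Thus near each vertex the metric is modelled on $\cx^n$, and since the correction term is smooth with positive-definite Hessian contribution, the \kah metric extends smoothly across all the facets meeting that vertex. Covering $\mathcal{P}$ by such vertex neighbourhoods together with $\mathcal{P}^\circ$ gives a smooth $T^n$-invariant \kah metric on all of $X$ in the class $[\omega]$.

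Finally I would record that this construction is canonical: it depends only on the combinatorial-affine data $(\nu_i, \lambda_i)$ of $\mathcal{P}$, and any two symplectic potentials differ by a smooth function on $\mathcal{P}$ (equivalently, the associated complex structures are related by a $T^n$-equivariant diffeomorphism), so $s_{\mathcal{P}}$ singles out a preferred representative. The technical heart is entirely the local model computation at the vertices — verifying that $\frac{1}{2}\sum x_i\log x_i$ really is the flat potential and controlling the smooth remainder — while the interior positivity and the assembly into a global metric are routine.
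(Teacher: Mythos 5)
This statement is quoted from Guillemin \cite{guil} as background (Theorem \ref{guilbd}); the paper offers no proof of it, so there is no internal argument to compare yours against. Judged on its own, your sketch follows the standard direct-verification route (essentially the one in Abreu's survey \cite{abrrev}): compute $\mathrm{Hess}(s_{\mathcal{P}})=\frac{1}{2}\sum_k \nu_k\nu_k^{T}/l_k$, get interior positivity from the fact that the $\nu_k$ span $\rl^n$, and then use the Delzant condition at each vertex to reduce to the flat model $\frac{1}{2}\sum_i x_i\log x_i$ on $\cx^n$ plus a term that is smooth near that vertex. That is a legitimate and correct strategy, and it is genuinely different from Guillemin's original proof, which realises $(X,\omega,J)$ as the \kah reduction of $(\cx^d,\omega_{\mathrm{std}},J_{\mathrm{std}})$ by the subtorus $N=\ker(T^d\to T^n)$ and computes the reduced potential via a Legendre transform. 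The reduction proof buys you the identification of $s_{\mathcal{P}}$ with the potential of a \emph{specific, canonically constructed} metric; your argument only shows that the formula defines \emph{some} compatible $T^n$-invariant complex structure extending smoothly to the compact $X$. For the way the present paper uses the result -- as a reference potential whose boundary singularities are then matched in Theorem \ref{abrcx} -- your weaker conclusion is all that is needed.

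Two small points to tighten. First, each smooth correction term near a vertex contributes $\nu_k\nu_k^{T}/(2l_k)$, which is only positive \emph{semi}-definite (rank one), not positive definite as you wrote; positivity of the total Hessian in the vertex chart comes from the model part, and the corrections cannot spoil it. Second, if you want to invoke the characterisation of Theorem \ref{abrcx} rather than redo the local holomorphic coordinates $z_i=\sqrt{2x_i}\,e^{\ai y_i}$ by hand, you should verify the determinant condition (\ref{abrcxthmhesssdet}) explicitly; the Cauchy--Binet identity
\begin{equation*}
\Bigl(\prod_{k=1}^{d} l_k(x)\Bigr)\det\bigl(\mathrm{Hess}(s_{\mathcal{P}})(x)\bigr)=2^{-n}\sum_{|I|=n}\det(\nu_I)^2\prod_{k\notin I} l_k(x)
\end{equation*}
exhibits the right-hand side as a polynomial that is strictly positive on all of $\mathcal{P}$ by the Delzant condition, which is exactly what is required.
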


Note that $s_{\mathcal{P}} (x)$ is not smooth at the boundary of the polytope, and this singular behaviour will be important in what follows. Abreu \cite{abr03} further showed that \textit{all} $\omega$-compatible $T^n$-invariant complex structures can be obtained by adding a smooth function to the above $s_{\mathcal{P}} (x)$.

\begin{theorem} \emph{(Abreu \cite[Theorem 2.8]{abr03})} \label{abrcx}
An $\omega$-compatible $T^n$-invariant complex structure on a toric \kah manifold $(X , \omega)$ is determined by a symplectic potential of the form $s (x) := s_{\mathcal{P}} (x) + r(x)$ where $r (x)$ is a function which is smooth on the whole of $\mathcal{P}$ such that the Hessian $\mathrm{Hess}(s)$ of $s$ is positive definite on the interior of $\mathcal{P}$ and has determinant of the form
\begin{equation} \label{abrcxthmhesssdet}
\det \left( \mathrm{Hess}(s) (x) \right) = \left[ \delta(x) \prod_{i=1}^d l_i (x) \right]^{-1},
\end{equation}
with $\delta$ being a smooth and strictly positive function on the whole of $\mathcal{P}$. Conversely, any symplectic potential of this form defines an $\omega$-compatible $T^n$-invariant complex structure on a toric \kah manifold $(X , \omega)$.
\end{theorem}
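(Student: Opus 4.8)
The plan is to reduce everything to a local analysis near $\partial\mathcal{P}$, taking Guillemin's potential $s_{\mathcal{P}}$ as the reference point. On $m^{-1}(\mathcal{P}^{\circ})$ the torus acts freely and, as recalled above, after normalising by a Hamiltonian diffeomorphism so that $R=0$ every $\omega$-compatible $T^n$-invariant integrable complex structure is encoded by a convex function $s$ on $\mathcal{P}^{\circ}$ with positive-definite Hessian, the associated Riemannian metric being
\[ g = \sum_{i,j} \frac{\partial^2 s}{\partial x_i \partial x_j}\, dx_i\, dx_j + \sum_{i,j} \bigl( \mathrm{Hess}(s)^{-1} \bigr)_{ij}\, dy_i\, dy_j . \]
Since $J$ is a smooth complex structure on $X$ exactly when $g$ extends to a smooth positive-definite metric on the whole of $X$, the entire content is to describe the admissible behaviour of $s$ as $x \to \partial\mathcal{P}$, and conversely to check that potentials with that behaviour do define a complex structure on $X$. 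As a preliminary I would record the model: differentiating Guillemin's potential gives $\mathrm{Hess}(s_{\mathcal{P}})(x) = \tfrac12 \sum_{i=1}^d l_i(x)^{-1}\, \nu_i \otimes \nu_i$, and the Delzant condition (at each face the inward normals of the facets through it form part of a $\itg$-basis of $\itg^n$) yields $\det \mathrm{Hess}(s_{\mathcal{P}}) = \bigl[\delta_0(x) \prod_{i=1}^d l_i(x)\bigr]^{-1}$ with $\delta_0$ smooth and strictly positive up to $\partial\mathcal{P}$; the corresponding complex structure is the canonical, hence smooth, one by Guillemin's theorem. Thus $s_{\mathcal{P}}$ is itself of the asserted form with $r=0$, and it suffices to compare a general $s$ with $s_{\mathcal{P}}$.

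For necessity, fix $p \in m^{-1}(\partial\mathcal{P})$ over the relative interior of a codimension-$k$ face cut out by $l_1 = \dots = l_k = 0$. After an integral affine change of the action coordinates we may take $\nu_i = e_i$ for $i \le k$, so that near $p$ one has $\mathrm{Hess}(s_{\mathcal{P}}) = \tfrac12\sum_{i=1}^k x_i^{-1}\, e_i \otimes e_i + (\text{smooth})$. By the local normal form for toric symplectic manifolds, a $T^n$-invariant neighbourhood of the orbit through $p$ is equivariantly symplectomorphic to a neighbourhood of the corresponding orbit in $\cx^k \times (\cx^*)^{n-k}$ carrying its standard \kah and torus structure; under this identification a given smooth $J$ and the canonical $J_0$ become smooth $T^n$-invariant $\omega_{\textup{std}}$-compatible complex structures, so in the logarithmic complex coordinates $\xi = \nabla s$ they have \kah potentials $\phi, \phi_0$ with $\mathrm{Hess}_\xi(\phi) = \mathrm{Hess}_x(s)^{-1}$ by Legendre duality, $\phi_0$ being the model potential. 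Smoothness of both complex structures across the $k$ collapsing $\cx$-factors forces $\phi - \phi_0$ to be a smooth function of the genuine holomorphic coordinates, which on transforming back says precisely that $r := s - s_{\mathcal{P}}$ extends smoothly across the face; and $\det\mathrm{Hess}_x(s) = 1/\det\mathrm{Hess}_\xi(\phi)$ together with the Jacobian of the Legendre transform shows that $\det\mathrm{Hess}(s)$ must acquire simple poles along $l_1 = \dots = l_k = 0$ with smooth strictly positive residue, i.e.~\eqref{abrcxthmhesssdet}. Running this over all faces (and over $\mathcal{P}^{\circ}$, where nothing is singular) gives the two conditions on $r$.

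For sufficiency I would reverse this. Given $s = s_{\mathcal{P}} + r$ of the stated form, one builds $J$ on $m^{-1}(\mathcal{P}^{\circ})$ from the formula recalled in \S \ref{tor} and checks it extends to a smooth complex structure on $X$ via the same local models. Near a codimension-$k$ face with $l_i = x_i$ for $i \le k$, writing $\mathrm{Hess}(s) = \tfrac12\sum_{i=1}^k x_i^{-1}\, e_i\otimes e_i + C(x)$ with $C$ smooth, the hypothesis \eqref{abrcxthmhesssdet} is equivalent, after clearing the poles, to the statement that the $(n-k)\times(n-k)$ minor of $C$ transverse to $e_1,\dots,e_k$ is smooth and strictly positive up to the face. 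Feeding this into the cofactor formula for $\mathrm{Hess}(s)^{-1}$ controls the rate at which its ``vertical'' block degenerates, and after the standard change to Cartesian resolving coordinates $u_i + \ai\, v_i := \sqrt{2 l_i}\; e^{\ai\, y_i}$, which desingularises the collapsing $y_i$-circles, one checks term by term that $g$---and hence $J$---is smooth and nondegenerate across the face; the smoothness of $r$ handles all entries not involving $e_1,\dots,e_k$. This is where \eqref{abrcxthmhesssdet} is essential: smoothness of $r$ alone would permit $g$ to degenerate along $\partial\mathcal{P}$.

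I expect the boundary analysis to be the main obstacle: one must treat faces of every codimension simultaneously and establish that ``$J$ smooth on $X$'' is genuinely \emph{equivalent} to ``$r$ smooth on $\mathcal{P}$ together with \eqref{abrcxthmhesssdet}'', not merely implied by it. The delicate point is the passage through the Legendre-dual holomorphic picture, where one must keep careful track of which functions of $(x_1,\dots,x_k)$ are smooth functions of the resolving coordinates $u_i, v_i$; the interior statement and the algebra of the Legendre transform are routine by comparison. This is, in essence, the argument of Abreu \cite{abr03}, building on Guillemin \cite{guil}.
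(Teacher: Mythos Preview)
The paper does not prove this theorem at all: it is stated in \S\ref{tor} purely as a background result, quoted from Abreu \cite[Theorem 2.8]{abr03}, and no proof is given or sketched. So there is nothing in the paper to compare your argument against.

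That said, your outline is a reasonable account of how Abreu's original proof goes: reduce to a local analysis near each face of $\mathcal{P}$, use Guillemin's potential $s_{\mathcal{P}}$ as the reference, pass through the Legendre transform to the complex/K\"ahler-potential picture, and invoke the equivariant local normal form to translate ``$J$ extends smoothly over the collapsing torus orbits'' into the stated boundary conditions on $s$. You have also correctly identified the genuinely delicate step---tracking which functions of the action variables descend to smooth functions in the resolving Cartesian coordinates on $\cx^k \times (\cx^*)^{n-k}$, and doing this uniformly at faces of every codimension. If you intend to write this up in full, that boundary bookkeeping is where the work lies; the interior and algebraic Legendre-duality statements are, as you say, routine.
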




The description in terms of the symplectic potential gives the scalar curvature a particularly neat form. Now let $g_s$ be the Riemannian metric defined by $\omega$ and the complex structure determined by the symplectic potential $s (x)$. Write $s^{ij} (x)$ for the inverse matrix of the Hessian $\frac{\partial^2 s}{ \partial x_i \partial x_j} (x)$. Abreu \cite{abr98} derived the following equation in the action-angle coordinates.

\begin{theorem} \label{abrscaacoords}
\emph{(Abreu \cite[Theorem 4.1]{abr98})} \label{abreq}
The scalar curvature $S(g_s)$ of $g_s$ can be written as
\begin{equation}
S (g_s) = - \frac{1}{2} \sum_{i,j = 1}^n \frac{\partial^2 s^{ij}}{ \partial x_i \partial x_j} (x). \label{abreq1}
\end{equation}
Moreover, $g_s$ is extremal if and only if
\begin{equation}
\frac{\partial}{\partial x_k} S (g_s) = \cst \label{abreq2}
\end{equation}
for all $k = 1, \dots , n$.
\end{theorem}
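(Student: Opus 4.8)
The plan is to pass from the action--angle (symplectic) picture to the complex picture via the Legendre transform, where the Ricci form and the holomorphy of gradient vector fields become transparent, and then to translate the resulting expressions back into the momentum coordinates $x$. On the open dense orbit $m^{-1}(\mathcal{P}^{\circ})$ I would introduce the Legendre-dual coordinates $u_j := \partial s/\partial x_j$; positivity of $\mathrm{Hess}(s)$ makes $x \mapsto u$ a diffeomorphism onto its image, with $\partial x_l/\partial u_j = s^{jl}$. By Guillemin's construction (Theorem \ref{guilbd}) and the standard Legendre correspondence between the symplectic and complex descriptions of a toric \kah manifold (see e.g.~\cite{abrrev, dontor}), $z_j := u_j + \ai\, y_j$ are holomorphic coordinates for the complex structure determined by $s$, a \kah potential is the Legendre transform $\phi(u) := \sum_j x_j u_j - s(x)$ with $\mathrm{Hess}_u(\phi) = \mathrm{Hess}_x(s)^{-1}$, and hence the metric coefficients $g_{j\bar k}$ of $g_s$ in these coordinates are a fixed constant multiple of $s^{jk}$.

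From this, since $\log\det(g_{j\bar k}) = -\log\det\mathrm{Hess}(s)$ up to an additive constant, one gets $\ric(\omega) = \ai \ddbar \log\det\mathrm{Hess}(s)$; and because $\log\det\mathrm{Hess}(s)$ depends only on $u = \rea\, z$, the Hermitian coefficients of this form are a constant multiple of $\partial^2 \log\det\mathrm{Hess}(s)/\partial u_i \partial u_j$. Taking the $\omega$-trace, i.e.~contracting against $g^{j\bar k}$ (a constant multiple of $S_{ij} := \partial^2 s/\partial x_i \partial x_j$) and fixing the universal normalisation constants, yields
\begin{equation*}
S(g_s) = \tfrac{1}{2}\sum_{i,j} S_{ij}\,\frac{\partial^2}{\partial u_i \partial u_j}\log\det\mathrm{Hess}(s).
\end{equation*}
To finish the first assertion I would change variables back: $\partial/\partial u_i = \sum_l s^{il}\,\partial/\partial x_l$ and $\sum_i S_{ij}s^{il} = \delta^l_j$, so the outer contraction collapses and the claim reduces to the pointwise identity
\begin{equation*}
\sum_m s^{km}\,\frac{\partial}{\partial x_m}\log\det\mathrm{Hess}(s) = -\sum_m \frac{\partial s^{km}}{\partial x_m},
\end{equation*}
which follows from $\partial_{x_m}\log\det\mathrm{Hess}(s) = \sum_{p,q} s^{pq}\,\partial_{x_m} S_{pq}$, from $\partial_{x_l} s^{km} = -\sum_{a,b} s^{ka}(\partial_{x_l} S_{ab}) s^{bm}$, and from the total symmetry of $\partial^3 s/\partial x_a \partial x_b \partial x_c$ in its three indices. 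Differentiating once more in $x_k$ then gives \eqref{abreq1}.

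For the second assertion, note first that $S(g_s)$ is $T^n$-invariant, hence a function of $x$ on the orbit, so it suffices to analyse $\mathrm{grad}^{1,0}_{g_s} f$ for an arbitrary $T^n$-invariant $f = f(x)$. Running the same computation (using $\partial f/\partial\bar z_k$ proportional to $\sum_l s^{kl}\partial_{x_l} f$ and $g^{j\bar k}$ proportional to $S_{jk}$), the same collapse yields $\mathrm{grad}^{1,0}_{g_s} f = (\cst)\sum_j (\partial f/\partial x_j)\,\partial/\partial z_j$. Since the $\partial/\partial z_j$ are holomorphic, $\bar\partial\,\mathrm{grad}^{1,0}_{g_s} f = (\cst)\sum_j \bar\partial(\partial f/\partial x_j)\otimes\partial/\partial z_j$; each coefficient $\partial f/\partial x_j$ is real-valued, and a real-valued $\bar\partial$-closed (equivalently holomorphic) function on the connected manifold $X$ is constant. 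Hence $g_s$ is extremal, i.e.~$\bar\partial\,\mathrm{grad}^{1,0}_{g_s} S(g_s) = 0$, if and only if $\partial S(g_s)/\partial x_k$ is constant for each $k$, which is \eqref{abreq2}; since $\mathrm{grad}^{1,0}_{g_s} S(g_s)$ is globally defined and smooth on $X$, the computation on the dense orbit suffices.

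The step I expect to be the main obstacle is not conceptual but a matter of careful bookkeeping: pinning down the several universal constants relating $\omega$, $g_{j\bar k}$, $s^{jk}$, the Ricci form, and the scalar-curvature normalisation of \S \ref{tor}, and then carrying the change of variables between $x$ and $u$ through cleanly --- in particular establishing the cubic symmetry identity above without sign errors. A minor additional point is justifying the passage from the open orbit to all of $X$ in the extremal statement, which is handled by smoothness of $\mathrm{grad}^{1,0}_{g_s} S(g_s)$ on $X$.
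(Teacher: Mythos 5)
The paper does not prove this statement at all --- it is quoted verbatim as a known theorem of Abreu, with a citation in place of a proof --- so there is no in-paper argument to compare against. Your proposal is a correct reconstruction of Abreu's original derivation: the Legendre transform $u_j=\partial s/\partial x_j$, the identification $g_{j\bar k}\propto s^{jk}$ and $g^{j\bar k}\propto S_{jk}$, the identity $\sum_m s^{km}\partial_{x_m}\log\det\mathrm{Hess}(s)=-\sum_m\partial_{x_m}s^{km}$ (which does follow from the symmetry of the third derivatives of $s$ exactly as you say), and the collapse $\mathrm{grad}^{1,0}_{g_s}f=(\cst)\sum_j(\partial f/\partial x_j)\,\partial/\partial z_j$ for $T^n$-invariant $f$ are all the right steps, and the extremal criterion then follows from the fact that a real-valued holomorphic function on the connected dense orbit is constant. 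The only points requiring care are the ones you already flag: the universal constants (which must be tracked consistently to land on the factor $-\tfrac12$ rather than some other multiple), and the passage from the open orbit to all of $X$, which your smoothness remark handles. I see no gap.
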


The equation (\ref{abreq1}) is often called \textbf{Abreu's equation}.

\section{Slope instability of $ \mathrm{Bl}_{\prj^1} \prj^n$}

\subsection{Statement of the result}

We now return to the case where we blow up a line $\prj^1$ inside $\prj^n$, where we assume $n \ge 3$ for the blowup to be nontrivial. For ease of notation, we write $X := \mathrm{Bl}_{\prj^1} \prj^n$ and also write $\pi$ for the blowdown map $\pi : X \to \prj^n$. We re-state the first part of Theorem \ref{intstmresp1pn} as follows. 

\begin{proposition} \label{mainthminstp1pn}
$X = \mathrm{Bl}_{\prj^1} \prj^n$, $n \ge 3$, is slope unstable with respect to any polarisation. In particular, $X$ cannot admit a cscK metric in any rational \kah class.
\end{proposition}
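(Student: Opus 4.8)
The plan is to exhibit a single destabilising subscheme and verify slope instability by direct computation, following the Ross--Thomas strategy used for $\mathrm{Bl}_{\mathrm{pt}} \prj^n$ in Theorem \ref{rtsistbblptpn}. The natural candidate is the exceptional divisor $E = \pi^{-1}(\prj^1)$ itself (or, if that fails for a given polarisation, the proper transform of a hyperplane containing the line $\prj^1$, which is also a divisor). Since $E$ is a smooth divisor in $X$, I can apply Theorem \ref{sldiv} and compute $\mu_c(\mathcal{O}_E, L)$ entirely in terms of intersection numbers, and likewise compute $\mu(X,L)$ via \eqref{comprsslope} using $K_X = \pi^* K_{\prj^n} + (\codim_{\prj^n} \prj^1 - 1)[E] = \pi^*\mathcal{O}_{\prj^n}(-n-1) + (n-2)[E]$. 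The claim "slope unstable with respect to any polarisation" then becomes: for every ample $L$ on $X$, there exists $c \in (0, \mathrm{Sesh}(E)]$ with $\mu(X,L) > \mu_c(\mathcal{O}_E, L)$.

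First I would set up the intersection theory on $X$. Every ample class is (a positive multiple of) $L = \pi^*\mathcal{O}_{\prj^n}(a) - b[E]$ for suitable positive rationals $a, b$ lying in the ample cone, so it suffices to prove slope instability for this two-parameter family. I would record the intersection numbers: with $H := \pi^*\mathcal{O}_{\prj^n}(1)$ we have $H^n = 1$, $H^{n-i}.E^i$ for $i \geq 1$ computed from the projective bundle structure of $E \to \prj^1$ (here $E = \prj(N_{\prj^1/\prj^n}^\vee)$ with $N_{\prj^1/\prj^n} \cong \mathcal{O}(1)^{\oplus(n-1)}$), and $H.E = 0$ in the appropriate sense. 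Plugging these into Theorems \ref{sldiv} and the slope formula gives $\mu(X,L)$ and $\mu_c(\mathcal{O}_E,L)$ as explicit rational functions of $a$, $b$, $c$, $n$. I would then either take $c = \mathrm{Sesh}(E)$ (computing the Seshadri constant from the boundary of the ample cone, which for blowups along linear subspaces of $\prj^n$ is classically known) or, more robustly, examine the sign of $\mu_c(\mathcal{O}_E,L) - \mu(X,L)$ as $c \to 0^+$ or via its derivative at $c = 0$, which often already detects instability independent of the precise Seshadri constant.

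The main obstacle I anticipate is organising the computation so that it works \emph{uniformly in the polarisation}: one must show the relevant inequality holds on the entire (two-dimensional, after normalisation one-dimensional) slice of the ample cone, not just for a convenient choice of $L$. This likely requires a monotonicity or convexity argument in the ratio $b/a$, or a clean factorisation of the difference of slopes exhibiting a manifestly positive (or negative, with the correct orientation) leading term in $c$. A secondary technical point is correctly identifying the ample cone of $X$ and $\mathrm{Sesh}(E)$ — since $X = \mathrm{Bl}_{\prj^1}\prj^n$ does not have a $\prj^1$-bundle structure, one cannot simply quote the toric/projective-bundle literature, but the two extremal rays of the Mori cone (contracting $E$ back to $\prj^1$, and the fibral curves of $E \to \prj^1$) are still easy to describe explicitly, so this should be manageable. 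If the exceptional divisor itself does not destabilise for some sub-range of polarisations, I would fall back on the proper transform $\tilde{H}$ of a generic hyperplane through $\prj^1$, for which $\mathrm{Sesh}(\tilde{H})$ and all intersection numbers are again elementary, and run the same computation; the union of the two cases should cover the whole ample cone. The final assertion about cscK metrics is then immediate from Theorems \ref{rtksssssssz} and \ref{donlbcsckkst}: slope instability with respect to the smooth divisor $E$ (or $\tilde{H}$) forces $K$-instability, hence no cscK metric in any rational \kah class.
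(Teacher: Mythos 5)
Your primary plan is exactly the paper's proof: after normalising $L = \pi^*\mathcal{O}_{\prj^n}(1) - \epsilon[E]$ with $\epsilon \in \rtn \cap (0,1)$ (so the ample cone is a one-parameter slice), the paper destabilises with the exceptional divisor $E$ itself at $c = \mathrm{Sesh}(E) = 1-\epsilon$, using precisely the intersection numbers you describe together with a lengthy but elementary positivity argument uniform in $\epsilon$, so neither of your fallbacks is needed. One caveat on the hedge: the ``$c \to 0^+$'' variant would not detect instability, since for a divisor $\mu_c(\mathcal{O}_E,L) \to +\infty$ as $c \to 0^+$, so the inequality genuinely has to be checked at (or near) the Seshadri constant.
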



\subsection{Proof of Proposition \ref{mainthminstp1pn}}

\subsubsection{Preliminaries on intersection theory} \label{premointth}

Observe first of all that any line bundle $L$ on $X= \mathrm{Bl}_{\prj^1} \prj^n$ can be written as $L = a \pi^* \ocal_{\prj^n} (1) - b  [E]$, with some $a,b \in \mathbb{Z}$, by recalling $\mathrm{Pic} (X)= \itg \pi^*\ocal_{\prj^n} (1) \oplus \itg [E]$. This is ample if and only if $a >b>0$. Thus, up to an overall scaling, we may say that any ample line bundle on $X$ can be written, as a $\rtn$-line bundle, as $L = \pi^* \ocal_{\prj^n} (1) - \epsilon  [E]$ for some $\epsilon  \in \rtn \cap (0,1)$.

This also implies $\mathrm{Sesh} (E, X, L) = 1 - \epsilon$; suppose that we blow up $E$ in $X$, with the blowdown map $\tilde{\pi} : \mathrm{Bl}_E X \cong X \isom X$. Then $\tilde{\pi}^* L-c[E] = \pi^* \ocal_{\prj^n} (1) - (c+ \epsilon) [E]$, which is ample if and only if $ - \epsilon < c < 1 - \epsilon$.

Henceforth, to simplify the notation, we write $H$ for the hyperplane in $\prj^n$ so that $[H] = \ocal_{\prj^n} (1)$.


Our aim is to show that $X$ is slope unstable with respect to the exceptional divisor $E$. Since the slope (\ref{comprsslope}) and the quotient slope (\ref{comprsqslope}) can be computed in terms of intersection numbers, we first need to prepare some elementary results on the intersection theory on $X$; more specifically, we need to compute $ \int_X c_1 (\pi^* [H])^j c_1 ([E])^{n-j}$ for $0 \le j \le n$. 

Recall (e.g.~\cite[\S 3, Chapter 3]{grihar}) the Euler exact sequence 
\begin{equation*}
0 \to \mathcal{O}_{\prj^n} \stackrel{j}{\to} \mathcal{O}_{\prj^n}(1)^{\oplus (n+1)} \to T_{\prj^n} \to 0 ,
\end{equation*}
where the vector bundle homomorphism $j$ takes $1 \in \ocal_{\prj^n}$ to the Euler vector field
\begin{equation*}
 \sum_{i=0}^n Z_i \frac{\partial}{\partial Z_i} \in \bigoplus_{i=0}^n \left( \ocal_{\prj^n} (1) \frac{\partial}{\partial Z_i} \right) \cong \ocal_{\prj^n} (1)^{\oplus (n+1)} ,
\end{equation*}
with $[ Z_0 : \cdots : Z_n]$ being the homogeneous coordinates on $\prj^n$. Restricting this sequence to a line $\prj^1 \subset \prj^n$, we get $0 \to \mathcal{O}_{\prj^1} \stackrel{j}{\to} \mathcal{O}_{\prj^1}(1)^{\oplus (n+1)} \to T_{\prj^n} |_{\prj^1} \to 0$. Combining this with the exact sequences $0 \to T_{\prj^1} \to T_{\prj^n} |_{\prj^1} \to N_{\prj^1 / \prj^n}  \to 0$ and $0 \to \mathcal{O}_{\prj^1} \to \mathcal{O}_{\prj^1}(1)^{\oplus 2} \to T_{\prj^1} \to 0$, we get $N_{\prj^1 / \prj^n} \cong  \mathcal{O}_{\prj^1}(1)^{\oplus (n-1)}$. Thus the exceptional divisor $E = \prj (N_{\prj^1 / \prj^n})$ is isomorphic to $\prj ( \mathcal{O}_{\prj^1}(1)^{\oplus (n-1)} ) \cong \prj ( \mathcal{O}_{\prj^1}^{\oplus (n-1)}) \cong \prj^1 \times \prj^{n-2}$. Note also that the adjunction formula (e.g.~\cite[\S 1, Chapter 1]{grihar}) shows $[E] |_E \cong N_{E /X} $, and that $N_{E/X}$ is isomorphic to the tautological bundle $\ocal_E (-1)$ over $E = \prj ( \mathcal{O}_{\prj^1}(1)^{\oplus (n-1)} )$. We observe that $\ocal_E (-1) \cong  p_1^* \mathcal{O}_{\prj^1} (1) \otimes  p_2^* \mathcal{O}_{\prj^{n-2}} (-1)$, where $p_1$ (resp.~$p_2$) is the natural projection from $E$ to $\prj^1$ (resp.~$\prj^{n-2}$), and that $\pi^* [H] |_E \cong  p_1^* \mathcal{O}_{\prj^1} (1) \otimes p_2^* \mathcal{O}_{\prj^{n-2}}$. 




With these observations, and recalling that $c_1 ([E])$ is the Poincar\'e dual of $E$, we compute
\begin{align*}
E^n &= \int_X c_1 ([E])^n = \int_E c_1 ([E])^{n-1} \\
&= \int_{\prj^1 \times \prj^{n-2}} ( p_1^* c_1 (\mathcal{O}_{\prj^1} (1)) -   p_2^* c_1 ( \mathcal{O}_{\prj^{n-2}} (1)) )^{n-1} \\
&=(-1)^{n-2}(n-1)
\end{align*}
and
\begin{align*}
\pi^* H . E^{n-1} &= \int_X c_1 (\pi^* [H]) c_1 ([E])^{n-1} \\ 
&=\int_{\prj^1 \times \prj^{n-2}} p_1^* c_1 (\mathcal{O}_{\prj^1} (1)) ( p_1^* c_1 (\mathcal{O}_{\prj^1} (1)) -   p_2^* c_1 ( \mathcal{O}_{\prj^{n-2}} (1)) )^{n-2} \\
&= (-1)^{n-2} .
\end{align*}
If $2 \le j <n$, we have
\begin{align*}
\pi^* H^j . E^{n-j} 
&= \int_{\prj^1 \times \prj^{n-2}} p_1^* c_1 (\mathcal{O}_{\prj^1} (1))^j ( p_1^* c_1 (\mathcal{O}_{\prj^1} (1)) -   p_2^* c_1 ( \mathcal{O}_{\prj^{n-2}} (1)) )^{n-j-1} \\
&=0
\end{align*}
and
\begin{equation*}
\pi^* H^n = \int_X \pi^* c_1 ([H])^n =\int_{\pi (X)}  c_1 ([H])^n  = \int_{\prj^n} c_1 ([H])^n = 1 .
\end{equation*}

Summarising the above, we get the following lemma.

\begin{lemma} \label{intersecp1pn}
Writing $x := c_1 (\pi^* [H])$ and $y := c_1 ([E])$, we have the following rules:
\begin{enumerate}
\item $x^n =1$,
\item $x y^{n-1} = (-1)^{n-2}$,
\item $y^n = (-1)^{n-2} (n-1) $,
\item $x^j y^{n-j} = 0$ for $2 \le j \le n-1$.
\end{enumerate}
\end{lemma}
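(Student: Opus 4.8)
The plan is to reduce every monomial in $x = c_1(\pi^*[H])$ and $y = c_1([E])$ on $X$ to an integral over the exceptional divisor $E$, plus the single elementary fact $\pi^* H^n = 1$. The first step is to identify $E$ and the restrictions $[E]|_E$ and $\pi^*[H]|_E$. Restricting the Euler sequence $0 \to \ocal_{\prj^n} \to \ocal_{\prj^n}(1)^{\oplus(n+1)} \to T_{\prj^n} \to 0$ to a line $\prj^1 \subset \prj^n$ and comparing it, via $0 \to T_{\prj^1} \to T_{\prj^n}|_{\prj^1} \to N_{\prj^1/\prj^n} \to 0$, with the Euler sequence of $\prj^1$, one gets $N_{\prj^1/\prj^n} \cong \ocal_{\prj^1}(1)^{\oplus(n-1)}$. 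Since projectivisation is insensitive to twisting by a line bundle, $E = \prj(N_{\prj^1/\prj^n}) \cong \prj(\ocal_{\prj^1}^{\oplus(n-1)}) = \prj^1 \times \prj^{n-2}$; write $p_1, p_2$ for the two projections and set $\alpha := p_1^* c_1(\ocal_{\prj^1}(1))$, $\beta := p_2^* c_1(\ocal_{\prj^{n-2}}(1))$, so that $\alpha^2 = \beta^{n-1} = 0$ and $\int_{\prj^1 \times \prj^{n-2}} \alpha \beta^{n-2} = 1$.

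Next I would pin down the two line bundles on $E$. The adjunction formula gives $[E]|_E \cong N_{E/X}$, which for a blowup along a smooth submanifold is the relative tautological bundle $\ocal_{\prj(N_{\prj^1/\prj^n})}(-1)$; tracking the twist through the identification above (using $\ocal_{\prj(\mathcal{E}\otimes L)}(-1) = \ocal_{\prj(\mathcal{E})}(-1)\otimes(\text{pullback of } L)$) yields $c_1([E]|_E) = \alpha - \beta$. Since $\pi|_E$ is the bundle projection $E \to \prj^1$ followed by the inclusion $\prj^1 \hookrightarrow \prj^n$, one gets $c_1(\pi^*[H]|_E) = \alpha$.

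Then the computation is mechanical. Because $c_1([E])$ is Poincar\'e dual to $E$, for $0 \le j \le n-1$ one has $\pi^* H^j . E^{n-j} = \int_E c_1(\pi^*[H]|_E)^j\, c_1([E]|_E)^{n-j-1} = \int_{\prj^1\times\prj^{n-2}} \alpha^j (\alpha - \beta)^{n-j-1}$. Expanding the binomial and using $\alpha^2 = 0$, $\beta^{n-1} = 0$ kills all but one term: for $j \ge 2$ nothing survives, giving (4); for $j = 1$ only $\alpha(-\beta)^{n-2}$ survives, giving $(-1)^{n-2}$, which is (2); for $j = 0$ only $(n-1)\alpha(-\beta)^{n-2}$ survives, giving $(-1)^{n-2}(n-1)$, which is (3). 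Finally (1) is separate: by the projection formula, since $\pi$ has connected fibres, $\pi^* H^n = \int_{\prj^n} c_1(\ocal_{\prj^n}(1))^n = 1$.

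The content is entirely in the first two steps; the integration itself is routine. The one point demanding care is tracking the line-bundle twist in $\prj(\ocal_{\prj^1}(1)^{\oplus(n-1)}) \cong \prj^1\times\prj^{n-2}$: the projectivisations coincide, but the relative $\ocal(-1)$ — which is precisely $[E]|_E$ — acquires a factor $p_1^*\ocal_{\prj^1}(1)$, and an error here would flip the sign of $\alpha-\beta$ and corrupt (2)--(4). A secondary, milder point is obtaining the splitting $N_{\prj^1/\prj^n} \cong \ocal_{\prj^1}(1)^{\oplus(n-1)}$ on the nose rather than merely its rank and degree, which the diagram chase above supplies.
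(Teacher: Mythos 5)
Your proposal is correct and follows essentially the same route as the paper: the Euler-sequence computation of $N_{\prj^1/\prj^n}\cong\ocal_{\prj^1}(1)^{\oplus(n-1)}$, the identification $E\cong\prj^1\times\prj^{n-2}$ with $[E]|_E\cong\ocal_E(-1)\cong p_1^*\ocal_{\prj^1}(1)\otimes p_2^*\ocal_{\prj^{n-2}}(-1)$ and $\pi^*[H]|_E\cong p_1^*\ocal_{\prj^1}(1)$, and then the reduction of each monomial to an integral over $E$. Your careful tracking of the twist in the tautological bundle is exactly the point the paper relies on, and the resulting computations agree.
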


\subsubsection{Computation of the slope $\mu (X,L)$}
We apply Lemma \ref{intersecp1pn} to the formula (\ref{comprsslope}) for the slope $\mu (X,L)$. Recall first of all that we have $K_X = \pi^* K_{\prj^n} + (n-2) [E]$ since we have blown up a complex submanifold of codimension $n-1$ (cf.~\cite[\S 6, Chapter 4]{grihar}). Note that $L  = \pi^* [H] - \epsilon  [E]$ and $K_X = \pi^* K_{\prj^n} + (n-2) [E]$ implies $c_1 (L) = x - \epsilon y$ and $c_1 (K_X) = -(n+1)x + (n-2)y$. We thus get
\begin{equation*}
 \int_X c_1 (K_X) c_1 (L)^{n-1} 
 =-(n+1)(1-\epsilon^{n-1}) + (n-1)(n-2) \epsilon^{n-2} (1-\epsilon ) 
\end{equation*}
by Lemma \ref{intersecp1pn}. Similarly we get $\int_X c_1 (L)^n =1 - n\epsilon ^{n-1} +(n-1) \epsilon^n$. Hence
\begin{equation*}
\mu (X,L) = \frac{n}{2}\frac{(n+1)(1-\epsilon^{n-1}) -(n-1)(n-2) \epsilon^{n-2} (1-\epsilon ) }{ (n-1) \epsilon^n- n \epsilon^{n-1}+1} .
\end{equation*}
For the later use, we write $\mathrm{Den}_1$ for the denominator and $\mathrm{Num}_1$ for the numerator of the fraction above, so that $\mu (X,L) = \frac{n}{2} \mathrm{Num}_1 / \mathrm{Den}_1$.

\subsubsection{Computation of the quotient slope $\mu_c (\mathcal{O}_E , L)$}

We now compute the quotient slope $\mu_c (\mathcal{O}_E , L)$ with respect to the exceptional divisor $E$ and for $c = \textup{Sesh} (E,X,L) = 1-\epsilon $, by using the formula (\ref{comprsqslope}).


We write $ \mu_{\mathrm{Sesh}(E)} (\mathcal{O}_E, L) = \frac{n}{2} \mathrm{Num}_2 / \mathrm{Den}_2$ and compute the denominator $\mathrm{Den}_2$ and the numerator $\mathrm{Num}_2$ separately. We first compute the denominator by using Lemma \ref{intersecp1pn}.
\begin{align*}
\mathrm{Den}_2
&=\sum_{j=1}^{n} \binom{n}{j} \frac{(\epsilon -1)^j}{j+1} (x-\epsilon y)^{n-j} y^j \\
&= \sum_{j=1}^{n} \binom{n}{j} \frac{(1-\epsilon )^j}{j+1} \left( -(n-j) \epsilon^{n-j-1} + (n-1)\epsilon^{n-j} \right) \\
&=\epsilon ^{n-1} \left( \left(1+ \frac{1-\epsilon }{\epsilon }\right)^n -1  \right) +\sum_{j=1}^{n} \binom{n}{j} \epsilon ^{n-1}  \left(  - \frac{n+1}{j+1} \left(\frac{1-\epsilon }{\epsilon }\right)^j  + \frac{n-1}{j+1} \epsilon  \left(\frac{1-\epsilon }{\epsilon }\right)^j \right).
\end{align*}
We now set $\chi := \frac{1-\epsilon }{\epsilon }$ and note the following identity
\begin{equation} \label{elempolyidq}
\sum_{j=1}^{n} \binom{n}{j} \frac{\chi^j}{j+1} = \sum_{j=1}^{n} \binom{n}{j} \frac{1}{\chi} \int_0^{\chi}  T^j dT = \frac{1}{\chi } \left( \frac{(1+\chi )^{n+1} -1}{n+1} - \chi \right) . 
\end{equation}
Observing $1+\chi  = \epsilon ^{-1}$, we thus get the denominator as
\begin{equation*}
\mathrm{Den}_2 =  - \frac{1-\epsilon^n}{1-\epsilon }+ n \epsilon^{n-1}+  \frac{n-1}{n+1} \frac{1 -\epsilon^{n+1}}{1-\epsilon } -(n-1) \epsilon^n .
\end{equation*}


We now compute the numerator. Since the first term $L^{n-1}.E$ is equal to $(x-\epsilon y)^{n-1} y = (n-1) \epsilon^{n-2}(1-\epsilon )$, 
we are left to compute the following second term
\begin{align*}
&\sum_{j=1}^{n-1} \binom{n-1}{j} \frac{(\epsilon -1)^j}{j+1} L^{n-1-j} .E^j . ( K_X + E) \\
&=\sum_{j=1}^{n-1} \binom{n-1}{j} \frac{(\epsilon -1)^j}{j+1} (x-\epsilon y)^{n-1-j} y^j (-(n+1) x + (n-2)y +y).
\end{align*}
By applying Lemma \ref{intersecp1pn}, we can compute each summand as
\begin{align*}
(x-\epsilon y)^{n-1-j} &y^j (-(n+1) x + (n-2)y +y) \\
&= (-1)^j (n-1) (n-j-1) \epsilon^{n-j-2}  - (-1)^j \epsilon^{n-j-1} n(n-3)  .
\end{align*}
We thus get
\begin{align*}
&\sum_{j=1}^{n-1} \binom{n-1}{j} \frac{(\epsilon -1)^j}{j+1} L^{n-1-j} .E^j . ( K_X + E) \\
&=\sum_{j=1}^{n-1} \binom{n-1}{j} \frac{\epsilon^{n-2}}{j+1}  \left( (n-1) (n-j-1) \left( \frac{1-\epsilon }{\epsilon } \right)^j  - \epsilon  \left( \frac{1-\epsilon }{\epsilon } \right)^j n(n-3) \right) .
\end{align*}
Setting $\chi = \frac{1-\epsilon }{\epsilon }$ as we did before, the above is equal to
\begin{align*}
&\sum_{j=1}^{n-1} \binom{n-1}{j} \epsilon^{n-2} \left(  (n-1)^2 \frac{\chi^j}{j+1} -(n-1)j \frac{\chi^j}{j+1} - \epsilon  n(n-3) \frac{\chi^j}{j+1}  \right) \\
&= -(n-1) \epsilon^{n-2}\sum_{j=1}^{n-1} \binom{n-1}{j}  \chi^j +\epsilon^{n-2} \sum_{j=1}^{n-1} \binom{n-1}{j}  \left( n(n-1) \frac{\chi^j}{j+1}- \epsilon  n(n-3) \frac{\chi^j}{j+1}  \right) .
\end{align*}
Now recalling the identity (\ref{elempolyidq}), we see that the above is equal to
\begin{align*}
&-(n-1) \epsilon^{n-2} ((1+\chi )^{n-1} -1 ) + n(n-1) \epsilon^{n-2} \frac{1}{\chi} \left(\frac{(1+\chi )^n -1}{n} -\chi  \right) \\
&\ \ \ \ \ \ \ \ \ \ \ \ \ \ \ \ \ \ \ \  \ \ \ \ \ \ \ \ \ \ \ \ \ \ \ \ \ \ \ \ \ \ \ \ \ \ \ \ \ \ \  -n(n-3) \epsilon^{n-1} \frac{1}{\chi} \left(\frac{(1+\chi )^n -1}{n} -\chi  \right) \\
&= (n-1) \frac{1-\epsilon^{n-1}}{1-\epsilon } -(n-1)^2 \epsilon^{n-2}  -  (n-3) \frac{1 -\epsilon^n}{1-\epsilon } + n(n-3) \epsilon^{n-1} .
\end{align*}
Thus we find the numerator to be
\begin{equation*}
\mathrm{Num}_2 = (n-1) \epsilon^{n-2}(1-\epsilon ) - (n-1) \frac{1-\epsilon^{n-1}}{1-\epsilon } + (n-1)^2 \epsilon^{n-2}  + (n-3) \frac{1 -\epsilon^n}{1-\epsilon } - n(n-3) \epsilon^{n-1} .
\end{equation*}

\subsubsection{Proof of instability}
We now compute $\mu_{\mathrm{Sesh}(E)} (\mathcal{O}_E, L)- \mu (X,L)$. Since $\mu_{\mathrm{Sesh}(E)} (\mathcal{O}_E, L) - \mu (X,L) <0$ implies that $(X,L)$ is slope unstable with respect to the divisor $E$ (cf.~Definition \ref{defslstssustqsl}), it suffices to show that
\begin{equation*}
\frac{2}{n} (\mu_{\mathrm{Sesh}(E)} (\mathcal{O}_E, L) - \mu (X,L)) = \frac{\mathrm{Num}_2}{\mathrm{Den}_2} - \frac{\mathrm{Num}_1}{\mathrm{Den}_1} 
\end{equation*}
is strictly negative for all $0 < \epsilon  < 1$.

Since $\epsilon ^j > \epsilon ^{j+1}$ for any non-negative integer $j$ if $0 < \epsilon  < 1$, we have the following inequalities:
\begin{align}
\mathrm{Den}_2 &=- \frac{1-\epsilon^n}{1-\epsilon }+ n \epsilon^{n-1}+  \frac{n-1}{n+1} \frac{1 -\epsilon^{n+1}}{1-\epsilon } -(n-1) \epsilon^n \notag \\
&= - \frac{2}{n+1}\sum_{j=0}^{n-1} \epsilon^j +  \frac{n(1-n)}{n+1} \epsilon^n+ n \epsilon^{n-1}  \notag \\
&< - \frac{2n}{n+1} \epsilon^{n-1} +  \frac{n(1-n)}{n+1} \epsilon^{n-1}+ n \epsilon^{n-1} =0 \label{den2pos0q1}
 \end{align}
and
\begin{equation*} 
\mathrm{Den}_1 = 1 - n \epsilon^{n-1} +(n-1) \epsilon ^n = (1-\epsilon ) \left( -n \epsilon^{n-1} + \sum_{j=0}^{n-1} \epsilon^j \right) > 0,
\end{equation*}
for $0 < \epsilon  < 1$.

Thus, to show slope instability, we are reduced to proving $\mathrm{Num}_2 \mathrm{Den}_1 - \mathrm{Num}_1 \mathrm{Den}_2 >0$, or equivalently
\begin{equation*}
(1- \epsilon ) \left( \mathrm{Num}_2 \mathrm{Den}_1 - \mathrm{Num}_1 \mathrm{Den}_2 \right) > 0
\end{equation*}
for $0 < \epsilon <1$.


We first re-write $(1- \epsilon )  \mathrm{Num}_2$ as 
\begin{align*}
(1- \epsilon )  \mathrm{Num}_2 
&= (n-1) \epsilon^{n-2}(1-\epsilon )^2 -(n-1) (1- (n-1) \epsilon^{n-2} +(n-2) \epsilon^{n-1}) \\
&\ \ \ \ \ \ \ \ \ \ \ \ \ \ \ \ \ \ \ \ \ \ \ \ \ \ \ \ \ \ \ \ \ \ \ \ \ \ \ \ \ \ \ \ \ \ \ \ \ \ \ \ \ \ \ \ \ \ \ \ \ + (n-3) (1- n\epsilon^{n-1} +(n-1) \epsilon^n) .
\end{align*}
Let
\begin{equation*}
F_m := 1- m\epsilon ^{m-1} + (m-1) \epsilon ^m
\end{equation*}
be defined for an integer $m >1$. We record the following lemma which we shall use later.
\begin{lemma} \label{elempropforfm}
The following hold for $F_m$, where $m >1$ is an integer:
\begin{enumerate}
\item $F_m >  \frac{m(m-1)}{2} (1-\epsilon )^2 \epsilon^{m-2}>0$ for $0 < \epsilon <1$,
\item $F_m - F_{m-1} = (m-1)\epsilon^{m-2} (1-\epsilon )^2 >0$ for $0<\epsilon <1$,
\item $F_n = \mathrm{Den}_1$.
\end{enumerate}
\end{lemma}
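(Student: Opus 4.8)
The plan is to prove each of the three items by elementary manipulation, starting from item (3), which is immediate, and then deriving (1) and (2) from a common algebraic identity. First I would record the factorisation
\[
F_m = 1 - m\epsilon^{m-1} + (m-1)\epsilon^m = (1-\epsilon)\left( \sum_{j=0}^{m-1} \epsilon^j - m\epsilon^{m-1} \right),
\]
which is verified by expanding the right-hand side: $(1-\epsilon)\sum_{j=0}^{m-1}\epsilon^j = 1-\epsilon^m$, and $(1-\epsilon)(-m\epsilon^{m-1}) = -m\epsilon^{m-1}+m\epsilon^m$, summing to $1 - m\epsilon^{m-1} + (m-1)\epsilon^m$. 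Comparing with the definition of $\mathrm{Den}_1 = 1 - n\epsilon^{n-1} + (n-1)\epsilon^n$ from the computation of the slope gives item (3) at once, with $m=n$.

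For item (2), I would simply subtract: $F_m - F_{m-1} = \left(-m\epsilon^{m-1} + (m-1)\epsilon^m\right) - \left(-(m-1)\epsilon^{m-2} + (m-2)\epsilon^{m-1}\right)$. Collecting the $\epsilon^{m-2}$, $\epsilon^{m-1}$, and $\epsilon^m$ terms, the coefficient of $\epsilon^{m-2}$ is $m-1$, of $\epsilon^{m-1}$ is $-m-(m-2) = -(2m-2)$, and of $\epsilon^m$ is $m-1$, so $F_m - F_{m-1} = (m-1)\epsilon^{m-2}(1 - 2\epsilon + \epsilon^2) = (m-1)\epsilon^{m-2}(1-\epsilon)^2$, which is strictly positive for $0<\epsilon<1$ and $m>1$.

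For item (1), the cleanest route is to push the factorisation one step further. Inside the bracket above, $\sum_{j=0}^{m-1}\epsilon^j - m\epsilon^{m-1} = \sum_{j=0}^{m-1}(\epsilon^j - \epsilon^{m-1}) = \sum_{j=0}^{m-2}(\epsilon^j - \epsilon^{m-1})$, and each term factors as $\epsilon^j - \epsilon^{m-1} = \epsilon^j(1-\epsilon^{m-1-j}) = \epsilon^j(1-\epsilon)\sum_{i=0}^{m-2-j}\epsilon^i$. Hence
\[
F_m = (1-\epsilon)^2 \sum_{j=0}^{m-2}\epsilon^j \sum_{i=0}^{m-2-j}\epsilon^i \ge (1-\epsilon)^2 \sum_{j=0}^{m-2}\epsilon^{m-2-j}\cdot (m-1-j),
\]
where I bounded $\epsilon^j \ge \epsilon^{m-2}/\epsilon^{m-2-j}$... more carefully: since $0<\epsilon<1$, each product $\epsilon^j\sum_{i=0}^{m-2-j}\epsilon^i \ge (m-1-j)\epsilon^{m-2}$, because the sum has $m-1-j$ terms each at least $\epsilon^{m-2-j}$ and $\epsilon^j\epsilon^{m-2-j}=\epsilon^{m-2}$. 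Summing over $j=0,\dots,m-2$ gives $F_m \ge (1-\epsilon)^2\epsilon^{m-2}\sum_{j=0}^{m-2}(m-1-j) = (1-\epsilon)^2\epsilon^{m-2}\cdot\frac{m(m-1)}{2}$, which is the desired bound; strict positivity of the final quantity for $0<\epsilon<1$ is clear. I do not anticipate a genuine obstacle here — the only mild care needed is getting the term-count $\frac{m(m-1)}{2}$ right and ensuring the chain of inequalities uses $0<\epsilon<1$ consistently; an alternative is to prove (1) by induction on $m$ using (2), with base case $F_2 = (1-\epsilon)^2$.
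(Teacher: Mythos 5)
Your proposal is correct and follows essentially the same route as the paper: both factor $F_m=(1-\epsilon)^2\sum_{j=0}^{m-2}\sum_{k=0}^{m-j-2}\epsilon^{j+k}$ and bound each of the $m-1-j$ terms of the inner sum below by $\epsilon^{m-2}$ to obtain the $\frac{m(m-1)}{2}$ count, with items (2) and (3) handled by direct computation. The only remark worth making is that your $\ge$ is the honest conclusion of this argument (for $m=2$ the double sum is the single term $1=\epsilon^{0}$, so $F_2=(1-\epsilon)^2$ and the inequality in item (1) is in fact an equality there), a point the paper's own strict ``$>$'' glosses over but which is harmless since only positivity and the case $m\ge 3$ are used with strictness.
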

\begin{proof}
Observe first of all 
\begin{align*}
F_m = 1-m\epsilon^{m-1} +(m-1)\epsilon ^m &=1-\epsilon^{m-1} - (m-1) \epsilon^{m-1} (1-\epsilon ) \\
&=(1-\epsilon ) \left( \sum_{j=0}^{m-2}(\epsilon^j - \epsilon^{m-1}) \right) \\
&=(1-\epsilon )^2 \left( \sum_{j=0}^{m-2} \left( \sum_{k=0}^{m-j-2}\epsilon^{j+k} \right) \right).
\end{align*}
Since $0<\epsilon <1$, we have $\epsilon^{j+1} <\epsilon ^j$ for any positive integer $j$. Thus
\begin{equation*}
F_m  > (1-\epsilon )^2 \left( \sum_{j=0}^{m-2} (m-j-1) \epsilon^{m-2} \right) = \frac{m(m-1)}{2}  (1-\epsilon )^2 \epsilon^{m-2} > 0,
\end{equation*}
proving the first item of the lemma. The second item follows from a straightforward computation. The third is a tautology.
\end{proof}

Using Lemma \ref{elempropforfm}, we can write $(1- \epsilon )  \mathrm{Num}_2 = (n-2)F_n -nF_{n-1}$, 
and hence 
\begin{equation*}
(1- \epsilon )  \mathrm{Num}_2  \mathrm{Den}_1 =  (n-2)F_n^2 -nF_{n-1} F_n.
\end{equation*}
Similarly, we compute $\mathrm{Num}_1  = (n-2)F_{n-1} + 3(1-\epsilon ^{n-1})$ 
and
\begin{equation}  \label{1qden2neg}
(1-\epsilon ) \mathrm{Den}_2 = - F_n + \frac{n-1}{n+1} F_{n+1}.
\end{equation}

Summarising these calculations, we finally get
\begin{align*}
&(1- \epsilon ) \left( \mathrm{Num}_2 \mathrm{Den}_1 - \mathrm{Num}_1 \mathrm{Den}_2 \right) \\
&= (n-2)F_n^2 -nF_{n-1} F_n +[(n-2)F_{n-1} + 3(1-\epsilon^{n-1})] \left( F_n - \frac{n-1}{n+1} F_{n+1} \right) ,
\end{align*}
and our aim now is to show that the right hand side of the above equation is strictly positive for all $0 < \epsilon < 1$.

By using Lemma \ref{elempropforfm}, we first re-write
\begin{align*}
(n-2)F_{n-1} + 3(1-\epsilon^{n-1}) &= (n-2)F_{n-1} + 3(F_n + (n-1) \epsilon^{n-1} (1-\epsilon )) \\
&= (n+1)F_n - (n-1)(n-2)\epsilon^{n-2}(1-\epsilon )^2 + 3(n-1) \epsilon ^{n-1} (1-\epsilon )
\end{align*}
so as to get
\begin{align*}
&(n-2)F_n^2 -nF_{n-1} F_n +[(n-2)F_{n-1} + 3(1-\epsilon^{n-1})] \left( F_n - \frac{n-1}{n+1} F_{n+1} \right)  \\
&= F_n \left(  -n F_{n-1}  +(2n-1) F_n - (n-1) F_{n+1} \right) \\
&\ \ \ \ \ \ \ \ \ \ \ \ \ \ \ \ \ \ \ \ \ + [ -(n-1)(n-2)\epsilon^{n-2}(1-\epsilon )^2 + 3(n-1) \epsilon^{n-1} (1-\epsilon )] \left( F_n - \frac{n-1}{n+1} F_{n+1} \right) .
\end{align*}
Now compute
\begin{align*}
-n F_{n-1} +(2n-1) F_n  - (n-1) F_{n+1} &= -n F_{n-1} +(n-2) F_n + (n+1)F_n - (n-1) F_{n+1} \\
&= n(n-1) \epsilon^{n-2} (1-\epsilon )^3,
\end{align*}
and get
\begin{align*}
&(1- \epsilon ) \left( \mathrm{Num}_2 \mathrm{Den}_1 - \mathrm{Num}_1 \mathrm{Den}_2 \right) \\
&=(n-1)\epsilon^{n-2} (1-\epsilon ) \left[ n(1-\epsilon )^2 F_n +((n+1)\epsilon -(n-2) ) \left( F_n - \frac{n-1}{n+1} F_{n+1} \right) \right] .
\end{align*}

Since $n(1-\epsilon )^2 F_n>0$ by Lemma \ref{elempropforfm} and
\begin{equation*}
  F_n - \frac{n-1}{n+1} F_{n+1}  = -(1-\epsilon )\mathrm{Den}_2> 0
\end{equation*}
by recalling (\ref{1qden2neg}) and (\ref{den2pos0q1}), we see that the above quantity is strictly positive if $(n+1)\epsilon -(n-2) \ge 0$, i.e. $\frac{n-2}{n+1} \le \epsilon  <1 $. This means that we have proved slope instability for $\frac{n-2}{n+1} \le \epsilon  <1 $.

Thus assume $ 0 < \epsilon  < \frac{n-2}{n+1}$ from now on. Now, again using Lemma \ref{elempropforfm}, we have
\begin{align*}
&(1- \epsilon ) \left( \mathrm{Num}_2 \mathrm{Den}_1 - \mathrm{Num}_1 \mathrm{Den}_2 \right) \\
&=(n-1)\epsilon^{n-2} (1-\epsilon ) \\
&\ \ \ \ \  \times  \left[ \left( n(1-\epsilon )^2 +2 \left( \epsilon -\frac{n-2}{n+1} \right) \right) F_n +[(n+1)\epsilon -(n-2) ] \left(- \frac{n(n-1)}{n+1} \epsilon^{n-1} (1-\epsilon )^2 \right) \right] .
\end{align*}
Noting $n(1-\epsilon )^2 +2\left( \epsilon -\frac{n-2}{n+1} \right) = n \left( \epsilon - \frac{n-1}{n} \right)^2 + \frac{5n-1}{n(n+1)}$ 
and also 
\begin{align*}
&[(n+1)\epsilon -(n-2) ] \left( - \frac{n(n-1)}{n+1} \epsilon^{n-1} (1-\epsilon )^2 \right) \\
&=n(n-1) \epsilon^{n-1}(1-\epsilon )^3 - \frac{3n(n-1)}{n+1} \epsilon^{n-1} (1-\epsilon )^2 ,
\end{align*}
we are thus reduced to proving that
\begin{equation*}
\left[ n \left( \epsilon - \frac{n-1}{n} \right)^2 + \frac{5n-1}{n(n+1)} \right] F_n  - \frac{3n(n-1)}{n+1} \epsilon^{n-1} (1-\epsilon )^2 +n(n-1) \epsilon^{n-1}(1-\epsilon )^3
\end{equation*}
is strictly positive for $ 0 < \epsilon  < \frac{n-2}{n+1}$.

Observe that $\frac{n-2}{n+1} < \frac{n-1}{n} $, which holds if $n \ge 1$, implies that $\left( \epsilon - \frac{n-1}{n} \right)^2$ is monotonically decreasing on $ 0 < \epsilon  < \frac{n-2}{n+1}$. Thus
\begin{equation*}
n \left( \epsilon - \frac{n-1}{n} \right)^2 + \frac{5n-1}{n(n+1)} > n \left( \frac{n-2}{n+1}- \frac{n-1}{n} \right)^2 + \frac{5n-1}{n(n+1)} =\frac{9n}{(n+1)^2} 
\end{equation*}
for  $ 0 < \epsilon  < \frac{n-2}{n+1}$. Hence, recalling Lemma \ref{elempropforfm}, we finally have
\begin{align*}
&\left[ n \left( \epsilon - \frac{n-1}{n} \right)^2 + \frac{5n-1}{n(n+1)} \right] F_n  - \frac{3n(n-1)}{n+1} \epsilon ^{n-1} (1-\epsilon )^2 \\
&>\frac{9n}{(n+1)^2} (1-\epsilon )^2 \epsilon^{n-2} \frac{n(n-1)}{2} - \frac{3n(n-1)}{n+1} \epsilon^{n-1} (1-\epsilon )^2 \\
&>(1-\epsilon )^2 \epsilon^{n-1} \frac{n(n-1)}{n+1} \left( \frac{9n}{2(n+1)}  - 3 \right) >0 
\end{align*}
for $ 0 < \epsilon  < \frac{n-2}{n+1}$, since $n \ge 3$. We have thus proved $(1- \epsilon ) \left( \mathrm{Num}_2 \mathrm{Den}_1 - \mathrm{Num}_1 \mathrm{Den}_2 \right) >0$ both for $ 0 < \epsilon  < \frac{n-2}{n+1}$ and $\frac{n-2}{n+1} \le \epsilon  <1$, finally establishing the slope instability for all $0 < \epsilon <1$.

\section{Extremal metrics on $\textup{Bl}_{\mathbb{P}^1} \mathbb{P}^n$}

\subsection{Statement of the result}

Having established the nonexistence of cscK metrics in Proposition \ref{mainthminstp1pn}, we now discuss the extremal metrics on $\textup{Bl}_{\mathbb{P}^1} \mathbb{P}^n$ ($n\ge 3$), with the blowdown map $\pi : \mathrm{Bl}_{\prj^1} \prj^n \to \prj^n$, as mentioned in the second part of Theorem \ref{intstmresp1pn}. We write $(x_1 , \dots , x_n)$ for the action coordinates on the moment polytope corresponding to $(\textup{Bl}_{\mathbb{P}^1} \mathbb{P}^n , \pi^* \mathcal{O}_{\prj^n} (1) - \epsilon [E])$, where the exceptional divisor is defined by $\{ \sum_{i=1}^{n-1} x_i = \epsilon \}$, and we write $r: = \sum_{i=1}^n x_i$ and $\rho := \sum_{i=1}^{n-1} x_i$; see \S \ref{ovpfexmp1pn} for more details. We re-state the second part of Theorem \ref{intstmresp1pn} as follows, with an explicit description of the extremal metrics in the action-angle coordinates.


\begin{proposition} \label{mainthmextp1pn}
There exists $0< \epsilon_0 <1$ such that $\mathrm{Bl}_{\prj^1} \prj^n$ admits an extremal \kah metric in the \kah class $\pi^*c_1 ( \mathcal{O}_{\prj^n} (1)) - \epsilon c_1 ([E])$ for any $\epsilon \in (0, \epsilon_0)$. Moreover, this metric admits an explicit description in terms of the symplectic potential $s(x)$ in the action-angle coordinates as follows:
\begin{equation}
s(x) = \frac{1}{2} \left( \sum_{i=1}^n x_i \log x_i + (1-r) \log (1-r) + h (\rho)  \right) \label{symppot}
\end{equation}
where $h(\rho)$ is given as an indefinite integral by
\begin{equation*}
h (\rho) =  \int^{\rho} d \rho   \int^{\rho} \frac{  -1  - \frac{2n+ \delta}{n(n-1)}   + \rho + \frac{(\delta - \gamma) \rho}{n(n+1)} +\frac{\gamma \rho^2}{(n+1)(n+2)}+ \alpha \rho^{-n}  + \beta \rho^{-n+1}     }{(1-\rho )   \left(    1- \rho  \left( 1  - \frac{2n+ \delta}{n(n-1)} + \frac{(\delta - \gamma) \rho }{n(n+1)} +\frac{\gamma \rho^2}{(n+1)(n+2)}+ \alpha \rho^{-n}  + \beta \rho^{-n+1} \right)  \right)} d \rho ,
\end{equation*}
with
\begin{align}
\alpha & = -1 - \frac{ \delta}{n(n+1)} - \frac{  \gamma}{(n+1)(n+2)} , \label{defofalphap1pn} \\
\beta &= \frac{n+1}{n-1} + \frac{ \delta}{n(n-1)} + \frac{ \gamma}{n(n+1)} , \label{defofbetap1pn} \\
\gamma &= \frac{n(n+1)(n+2)\left(  \left( \frac{ \epsilon^{n+1} -1}{n(n+1)} + \frac{\epsilon - \epsilon^n }{n(n-1)}  \right)\delta - 1 + \frac{n+1}{n-1} \epsilon - \epsilon^{n-1} + \frac{n-3}{n-1} \epsilon^n  \right)}{-n \epsilon^{n+2} + (n+2) \epsilon^{n+1} +n - (n+2) \epsilon}  , \label{defofgammap1pn} \\
\delta &= \left( \epsilon^{n-2} (1- \epsilon) - \frac{(-n \epsilon^{n+1} + (n+1) \epsilon^{n} -1)(n+2)\left( - 1 + \frac{n+1}{n-1} \epsilon - \epsilon^{n-1} + \frac{n-3}{n-1} \epsilon^n   \right)}{-n \epsilon^{n+2} + (n+2)  \epsilon^{n+1} +n - (n+2) \epsilon} \right. \notag \\
&\ \ \ \ \ \ \ \ \ \ \ \ \ \ \ \ \ \ \ \ \ \ \ \ \ \ \ \ \ \ \ \ \ \ \ \ \ \ \ \ \ \ \ \ \ \ \ \ \ \ \ \ \ \ \ \ \ \ \ \ \ \ \  \left. + \frac{n(n-3)}{n-1} \epsilon^{n-1} -(n-1) \epsilon^{n-2} + \frac{n+1}{n-1} \right) \notag \\
&\ \ \ \ \ \times \left( \frac{(-n \epsilon^{n+1} + (n+1) \epsilon^{n} -1)(n+2)}{-n \epsilon^{n+2} + (n+2) \epsilon^{n+1} +n - (n+2) \epsilon} \left( \frac{\epsilon^{n+1} -1}{n(n+1)} + \frac{\epsilon - \epsilon^n }{n(n-1)}  \right) \right. \notag \\
&\ \ \ \ \ \ \ \ \ \ \ \ \ \ \ \ \ \ \ \ \ \ \ \ \ \ \ \ \ \ \ \ \ \ \ \ \ \ \ \ \ \ \ \ \ \ \ \ \ \ \ \ \ \ \ \ \ \ \ \ \ \ \ \ \ \ \ \ \ \ \ \ \ \ \ \ \ \ \ \  \left.+ \frac{-(n-1) \epsilon^n +n \epsilon^{n-1} -1}{n(n-1)} \right)^{-1} . \label{defofdeltap1pn}
\end{align}
\end{proposition}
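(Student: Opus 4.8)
The plan is to follow the toric approach recalled in \S\ref{tor}, in the spirit of the treatment of $\mathrm{Bl}_{\mathrm{pt}}\prj^n$ in \cite{abrrev, raza}, with the details of the setup carried out in \S\ref{ovpfexmp1pn}. First I identify the moment polytope of the pair $\bigl(\mathrm{Bl}_{\prj^1}\prj^n,\ \pi^*\ocal_{\prj^n}(1)-\epsilon[E]\bigr)$: it is
\[
\mathcal{P}_\epsilon=\Bigl\{x\in\rl^n : x_i\ge 0\ (1\le i\le n),\ \ r:=\sum_{i=1}^n x_i\le 1,\ \ \rho:=\sum_{i=1}^{n-1}x_i\ge\epsilon\Bigr\},
\]
obtained from the standard $n$-simplex (the polytope of $(\prj^n,\ocal_{\prj^n}(1))$) by slicing off a neighbourhood of the edge that is the image of $\prj^1$; one checks $\mathcal{P}_\epsilon$ is Delzant, that its facet $\{\rho=\epsilon\}$ corresponds to the exceptional divisor $E$, and that its defining affine functions are $l_i=x_i$ $(1\le i\le n)$, $l_{n+1}=1-r$, $l_{n+2}=\rho-\epsilon$. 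Throughout, the symplectic form is fixed and I work with the $T^n$-invariant complex structure determined by a symplectic potential $s(x)$ as in Theorems \ref{guilbd}--\ref{abrcx}.

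Next I substitute the ansatz \eqref{symppot} into Abreu's equation \eqref{abreq1}--\eqref{abreq2}. A direct computation gives $2\,\mathrm{Hess}(s)=\mathrm{diag}(x_1^{-1},\dots,x_n^{-1})+(1-r)^{-1}\mathbf{1}\mathbf{1}^{\top}+h''(\rho)\,\mathbf{v}\mathbf{v}^{\top}$ with $\mathbf{v}=(1,\dots,1,0)$, and two successive applications of the Sherman--Morrison formula invert it in closed form: the inverse equals the Fubini--Study matrix $\mathrm{diag}(x)-xx^{\top}$ corrected by a single rank-one term governed by the one-variable function $F(\rho):=h''(\rho)^{-1}+\rho(1-\rho)$. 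Feeding the resulting $s^{ij}$ into \eqref{abreq1} and using that the ansatz is invariant under permutations of $x_1,\dots,x_{n-1}$, the scalar curvature $S(g_s)$ turns out to depend on $x$ only through $\rho$; hence, by \eqref{abreq2}, $g_s$ is extremal if and only if $S(g_s)$ is affine in $\rho$. This converts \eqref{abreq2} into a second-order linear ODE with polynomial coefficients for $1/F$ (equivalently a fourth-order linear ODE for $h$), whose associated homogeneous equation has $\rho=0$ as a regular singular point with indicial roots $-n$ and $-(n-1)$; the two corresponding solutions produce the terms $\alpha\rho^{-n}$ and $\beta\rho^{-n+1}$ in \eqref{symppot}, while a particular solution accounts for the polynomial part and introduces the constants $\gamma,\delta$, which encode the eventual extremal affine function. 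Integrating $h''=(F-\rho(1-\rho))^{-1}$ twice then yields the double-integral formula for $h(\rho)$.

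It remains to determine $\alpha,\beta,\gamma,\delta$ so that $s$ is a genuine symplectic potential in the sense of Theorem \ref{abrcx}. For this I impose the boundary/regularity conditions that make $s$ the potential of an honest $T^n$-invariant $\omega$-compatible complex structure: (i) at the facet $\{\rho=\epsilon\}$, $s$ must match Guillemin's potential $\tfrac12(\rho-\epsilon)\log(\rho-\epsilon)$ up to a smooth term (Theorem \ref{guilbd}), i.e. the denominator of the integrand in \eqref{symppot} has a simple zero at $\rho=\epsilon$ and the associated residue of $h''$ equals $1$; (ii) at $\rho=1$, which is the codimension-two face $\{x_n=0,\ r=1\}$ of $\mathcal{P}_\epsilon$ and where no extra logarithm is allowed (note $1-\rho=(1-r)+x_n$), $h$ must be smooth, equivalently $h''$ is finite as $\rho\to1$, which forces the numerator in \eqref{symppot} to vanish at $\rho=1$ to the order dictated by the factor $1-\rho$ in the denominator. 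Part of these requirements is automatic because $1/F$ already solves the ODE, whose singular points sit at $\rho=0$ and $\rho=1$, and the remaining equations form a linear system in $\alpha,\beta,\gamma,\delta$ whose unique solution is \eqref{defofalphap1pn}--\eqref{defofdeltap1pn}. One then verifies the positivity requirements of Theorem \ref{abrcx}: that $\mathrm{Hess}(s)>0$ on the interior of $\mathcal{P}_\epsilon$, equivalently $h''(\rho)>0$ (equivalently $F(\rho)>\rho(1-\rho)$) for $\epsilon<\rho<1$; and that $\det\mathrm{Hess}(s)=\bigl[\delta(x)\prod_{i=1}^{n+2}l_i\bigr]^{-1}$ with $\delta(x)$ smooth and strictly positive on $\overline{\mathcal{P}_\epsilon}$ --- which, using $\det\bigl(2\,\mathrm{Hess}(s)\bigr)=h''(\rho)F(\rho)\bigl(x_1\cdots x_n(1-r)\bigr)^{-1}$, reduces to $\delta(x)=2^{n}\bigl(h''(\rho)F(\rho)(\rho-\epsilon)\bigr)^{-1}$ being smooth and positive, which is automatic near $\rho=\epsilon$ and $\rho=1$ by (i)--(ii) and elsewhere equivalent to $h''>0$.

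The crux is this last positivity statement. The constants \eqref{defofalphap1pn}--\eqref{defofdeltap1pn} are rational functions of $\epsilon$ that extend continuously to $\epsilon=0$, and the inequalities $F(\rho)>\rho(1-\rho)$ and $\delta(x)>0$ are strict and open, so the plan is to establish them, for all sufficiently small $\epsilon>0$, by estimating the explicit rational function $h''(\rho)$ on $[\epsilon,1]$ with control uniform in $\epsilon$ as $\epsilon\to0^{+}$. Special care is needed near $\rho=\epsilon$, where $h''$ blows up and the perturbation of the Fubini--Study configuration (corresponding to $h\equiv0$ at $\epsilon=0$) is singular, so that a naive passage to the limit does not suffice and one needs instead a matched asymptotic analysis, or a direct sign analysis of the numerator and denominator of the integrand in \eqref{symppot}. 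I expect this explicit estimate --- together with the bookkeeping needed to pin down the boundary conditions and to check solvability of the linear system for $\alpha,\beta,\gamma,\delta$ --- to be the main obstacle. Once it is carried out, $s$ given by \eqref{symppot} with \eqref{defofalphap1pn}--\eqref{defofdeltap1pn} defines a $T^{n}$-invariant \kah metric in the class $\pi^{*}c_1(\ocal_{\prj^n}(1))-\epsilon c_1([E])$ which is extremal by construction for every $\epsilon\in(0,\epsilon_0)$, thereby proving the proposition.
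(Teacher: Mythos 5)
Your plan follows the paper's route essentially step for step: the same ansatz \eqref{symppotfsh}, the same reduction of Abreu's equation to a second-order linear ODE in the single variable $\rho$ (your $1/F$ is, up to the substitution $A=1+(1-\rho)^2/F$, the paper's auxiliary function $A(\rho)$), the same four boundary conditions (two at $\rho=1$, two at $\rho=\epsilon$) pinning down $\alpha,\beta,\gamma,\delta$, and the same perturbative strategy for small $\epsilon$. However, you have deferred precisely the part that constitutes the proof. The entire quantitative content — that the denominator polynomial $Q(\rho)=\rho^{n-1}(1-\rho)^2-\rho^nP(\rho)+\rho^{n-1}-\rho^n-\alpha-\beta\rho$ is nonvanishing on $(\epsilon,1)$ uniformly for small $\epsilon$, and that $h''$ admits a lower bound uniform in $\epsilon$ — is exactly where the paper spends four lemmas (\ref{rho1remsingab}--\ref{lemsijposdef}), using the expansions $\alpha,\beta,\gamma=O(\epsilon^2)$, $\delta=-n(n+1)+O(\epsilon^2)$ and a case split at $\rho=\tfrac{1}{2n}$ and $\rho=\tfrac12$. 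Flagging this as ``the main obstacle'' is accurate, but it means the proposal is a programme rather than a proof.

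There is also a concrete error in your positivity criterion that could make the final step unprovable as stated. Writing $2\,\mathrm{Hess}(s)=\mathrm{diag}(x_i^{-1})+(1-r)^{-1}\mathbf{1}\mathbf{1}^{\top}+h''\,\mathbf{v}\mathbf{v}^{\top}$ and using the matrix determinant lemma with $\mathbf{v}^{\top}(s^{FS})^{-1}\mathbf{v}=2\rho(1-\rho)$, positive definiteness of $\mathrm{Hess}(s)$ on the interior is equivalent to
\begin{equation*}
1+\rho(1-\rho)\,h''(\rho)>0 ,
\end{equation*}
\emph{not} to $h''(\rho)>0$ (equivalently, your $F(\rho)>\rho(1-\rho)$). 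The two conditions differ exactly when $h''$ is slightly negative, and this is not a vacuous distinction here: for the constants \eqref{defofalphap1pn}--\eqref{defofdeltap1pn} one only gets $|h''|=O(\epsilon^{2})$ on $[\tfrac12,1)$ with no control on its sign (the sign is that of an auxiliary polynomial $\tilde F_1$, which need not be positive), so the inequality $h''>0$ you propose to establish may simply fail. What one can prove, and what suffices, is the uniform bound $h''(\rho)>-\epsilon\,C(\epsilon_1)$ for all $\rho\in(\epsilon,1)$, which yields $1+\rho(1-\rho)h''\ge 1-\epsilon C/4>0$ and also gives the determinant the form required in \eqref{abrcxthmhesssdet}. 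Your own identity $h''F=1+\rho(1-\rho)h''$ already shows the correct equivalence is with $h''F>0$; note that this does not force $F>\rho(1-\rho)$. You should reformulate the target inequality accordingly before attempting the estimates.
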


\begin{remark}
Note that the symplectic potential is well-defined up to affine functions, and hence the integration constants in $h(\rho)$ are not significant.
\end{remark}



\subsection{Proof of Proposition \ref{mainthmextp1pn}}

\subsubsection{Overview of the proof} \label{ovpfexmp1pn}
The basic strategy of the proof, as given in \S \ref{redexteq2ordlinode} and \S \ref{regsymppoth}, is exactly the same as in \cite[\S 5]{abrrev} or \cite[\S 4.2]{raza} for the point blow-up case; the crux of what is presented in the following is to show that the same strategy does indeed work for $\mathrm{Bl}_{\prj^1} \prj^n$, with an extra hypothesis $ \epsilon \ll 1$.


We recall that the moment polytope $\mathcal{P} (\prj^n)$ for $\prj^n$, with the Fubini--Study symplectic form, is the region in $\rl^n$ defined by the set of affine inequalities $\mathcal{P} (\prj^n) := \{ x_1 \ge 0, \dots , x_n\ge 0,  \sum_{i=1}^n x_i \le 1\}$ (cf.~Figure \ref{polytopep3}), where $(x_1, \dots , x_n) \in \rl^n $ are the action coordinates as defined in \S \ref{tor}.
\begin{figure}[p]
\begin{center}
\includegraphics[clip,width=8.0cm]{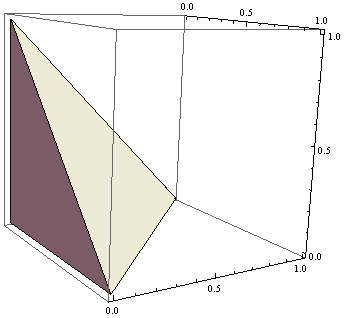}
\caption{The moment polytope $\mathcal{P} (\prj^3)$ for $\prj^3$.}
\label{polytopep3}
\end{center}
\end{figure}
The moment polytope $\mathcal{P}_{\epsilon} (X)$ for the blowup $X= \bl{\prj^1}{\prj^n}$ is obtained by cutting one edge by $\epsilon$ amount: $\mathcal{P}_{\epsilon} (X) := \{ x_1 \ge 0, \dots , x_n\ge 0,  \sum_{i=1}^n x_i \le 1 , \sum_{i=1}^{n-1} x_i \ge \epsilon \}$ (cf.~Figure \ref{polytopeblp1p3}), where the $\prj^1$ that is blown up corresponds to the line defined by $\{x_1 = \dots = x_{n-1} = 0 \}$. Note that the symplectic form $\omega$ on $X$ is in the cohomology class $\pi^* c_1 ( \mathcal{O}_{\prj^n} (1))  - \epsilon c_1 ([E])$ (cf.~\cite[Theorem 6.3]{guil}).
\begin{figure}[p]
\begin{center}
\includegraphics[clip,width=8.0cm]{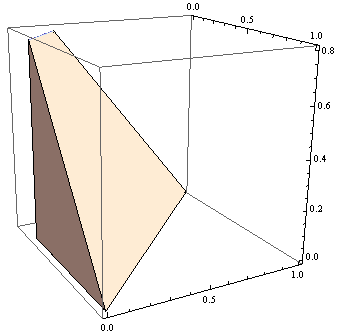}
\caption{The moment polytope $\mathcal{P}_{\epsilon} (X)$ for $X = \mathrm{Bl}_{\prj^1} \prj^3$, with $\epsilon = 0.2$.}
\label{polytopeblp1p3}
\end{center}
\end{figure}
We write $r := \sum_{i=1}^{n} x_i $ and $ \rho:= \sum_{i=1}^{n-1} x_i$ for notational convenience. Recall also that we assume $n \ge 3$ for the blow-up to be non-trivial.

Our strategy is to seek a symplectic potential $s$ of the form
\begin{equation} \label{symppotfsh}
s(x) = \frac{1}{2} \left( \sum_{i=1}^n x_i \log x_i + (1-r) \log (1-r) + h(\rho)   \right) ,
\end{equation}
where $h(\rho)$ stands for some function of $\rho$, so that the Riemannian metric $g_s$ given by the symplectic form $\omega$ and the complex structure defined by $s$ (cf.~Theorem \ref{abrcx}) satisfies the equation
\begin{equation} \label{exteqsprhoaac}
S (g_s) = - \gamma \rho - \delta 
\end{equation}
for some constants\footnote{The factor of $-1$ in (\ref{exteqsprhoaac}) is an artefact to be consistent with the equation (\ref{exteqaacredto2olode}).} $\gamma$ and $\delta$. Such a metric $g_s$ would be an extremal metric by Theorem \ref{abrscaacoords}. Our first result is that the equation (\ref{exteqsprhoaac}) reduces to a second-order linear ordinary differential equation (ODE) as given in (\ref{exteqaacredto2olode}), similarly to the case of the point blowup (cf.~\cite{abrrev, raza}). The equation (\ref{exteqaacredto2olode}) can be easily solved, and the solution is given in (\ref{solesteqaacrt2ode}) with two additional free constants $\alpha$ and $\beta$. This is the content of \S \ref{redexteq2ordlinode}.

However, it is not a priori obvious that $s(x) $ as defined in (\ref{symppotfsh}), with $h$ obtained from (\ref{solesteqaacrt2ode}), gives a well-defined symplectic potential. The main technical result (Proposition \ref{reghmaintr}) that we establish in \S \ref{regsymppoth} is that, once we choose $\alpha$, $\beta$, $\gamma$, $\delta$ as in (\ref{defofalphap1pn}), (\ref{defofbetap1pn}), (\ref{defofgammap1pn}), (\ref{defofdeltap1pn}) and $\epsilon$ to be sufficiently small, $h$ obtained from (\ref{solesteqaacrt2ode}) does satisfy all the regularity hypotheses required in Theorem \ref{abrcx}, so that $s(x)$ is a well-defined symplectic potential. This is the content of \S \ref{regsymppoth}.


\subsubsection{Reducing the equation (\ref{exteqsprhoaac}) to a second order linear ODE} \label{redexteq2ordlinode}

We first compute the Hessian
\begin{equation*}
s_{ij}: = \frac{\partial^2 s}{\partial x_i \partial x_j} (x)
\end{equation*}
of the symplectic potential $s(x) = \frac{1}{2} \left( \sum_{i=1}^n x_i \log x_i + (1-r) \log (1-r) + h(\rho)   \right)$ as follows:
\[ s_{ij} =
\begin{dcases}
& \frac{1}{2} \left( \frac{\delta_{ij}}{x_i} + \frac{1}{1-r} + h'' \right) \ \ \ \ \text{   if } i , j \neq n, \\
& \frac{1}{2} \left( \frac{\delta_{ij}}{x_i} + \frac{1}{1-r} \right) \ \ \ \ \ \ \ \ \ \ \ \ \text{   if } i =n \text{ or } j = n \text{ or both}.
\end{dcases}
\]
By direct computation, we find the inverse matrix $s^{ij}$ of $s_{ij}$ to be
\[ s^{ij} =
\begin{dcases}
& 2 \left( x_i \delta_{ij} -\frac{x_i x_j (1 + (1- \rho) h'' )}{1 +  \rho (1 - \rho) h''} \right) \ \ \ \ \ \ \ \ \ \text{   if } i , j \neq n, \\
& -\frac{ 2 x_i x_n }{1 + \rho (1 - \rho) h''} \ \ \ \ \ \ \ \ \ \ \ \ \ \ \ \ \ \ \ \ \ \ \ \ \ \ \ \ \ \ \text{   if }  i \neq n \text{ and } j=n, \\
 & \frac{ 2 x_n}{1- \rho} \left( 1-\rho -x_n + \frac{x_n \rho}{1 + \rho (1 - \rho) h''}    \right) \ \ \ \text{   if } i = j = n.
\end{dcases}
\]

Let $A$ be a function of $\rho$ defined by
\begin{equation*}
A (\rho) := \frac{1 + (1- \rho) h''}{1 +  \rho (1 - \rho)h''} ,
\end{equation*}
so that we can re-write the above as
\[ s^{ij} = 
\begin{dcases}
& 2(x_i \delta_{ij} - x_i x_j A) \ \ \ \ \ \ \ \ \ \ \ \ \ \ \ \ \ \ \ \ \  \text{   if } i , j \neq n, \\
& -\frac{ 2 x_i x_n (1- \rho A) }{1 - \rho} \ \ \ \ \ \ \ \ \ \ \ \ \ \ \ \ \ \  \text{   if } i \neq n \text{ and } j=n,\\
& \frac{ 2 x_n}{1- \rho} \left( 1-\rho -x_n + \frac{x_n \rho (1 - \rho A)}{1 - \rho}    \right) \ \ \ \text{   if } i = j = n.
\end{dcases}
\]
Thus, by Abreu's equation (\ref{abreq1}) (cf.~Theorem \ref{abrscaacoords}), we have
\begin{align*}
S (g_s) 
&= \sum_{i = 1}^{n-1} ( 2 A + 4 x_i A' + x_i^2 A'')+ 2 \sum_{1 \le i < j \le n-1 } (A + x_i A' + x_j A' + x_i x_j A'' ) \\
& \ \ \ \ \ + 2 \sum_{i = 1}^{n-1} \left(  \frac{1 - \rho A}{1 - \rho} + x_i \left ( \frac{1 - \rho A}{1 - \rho} \right)'    \right) + \left( \frac{2}{1- \rho} - \frac{2 \rho (1- \rho A)}{(1- \rho)^2}  \right) .
\end{align*}
Hence, re-arranging the terms, we find
\begin{equation*}
S (g_s) = \rho^2 A'' + 2 \left( n - \frac{\rho}{1 - \rho} \right) \rho A' + \left(  n(n-1) - \frac{2n \rho}{1- \rho}  \right) A + \frac{2n}{1- \rho} .
\end{equation*}
Thus the equation (\ref{exteqsprhoaac}) to be solved can now be written as
\begin{equation} \label{exteqaacredto2olode}
\rho^2 A'' + 2 \left( n - \frac{\rho}{1 - \rho} \right) \rho A' + \left(  n(n-1) - \frac{2n \rho}{1- \rho}  \right) A + \frac{2n}{1- \rho}+ \gamma \rho +\delta = 0
\end{equation}
for some constants $\gamma$ and $\delta$. The general solution to this equation is given by
\begin{equation*}
A= \frac{1}{1-\rho} \left( - \frac{2n+ \delta}{n(n-1)} + \frac{(\delta - \gamma) \rho}{n(n+1)} +\frac{\gamma \rho^2}{(n+1)(n+2)}+ \alpha \rho^{-n}  + \beta \rho^{-n+1} \right) ,
\end{equation*}
for some constants $\alpha$ and $\beta$. Recalling $A= \frac{1 + (1- \rho) h''}{1 +  \rho (1 - \rho) h''}$, we can now write $h''$ as
\begin{align}
h'' &= \frac{A-1}{(1- \rho)(1 - \rho A)} \notag \\
&=\frac{  -1  - \frac{2n+ \delta}{n(n-1)}   + \rho + \frac{(\delta - \gamma) \rho}{n(n+1)} +\frac{\gamma \rho^2}{(n+1)(n+2)}+ \alpha \rho^{-n}  + \beta \rho^{-n+1}     }{(1-\rho)   \left(    1- \rho \left( 1  - \frac{2n+ \delta}{n(n-1)} + \frac{(\delta - \gamma) \rho}{n(n+1)} +\frac{\gamma \rho^2}{(n+1)(n+2)}+ \alpha \rho^{-n}  + \beta \rho^{-n+1} \right)  \right)} . \label{solesteqaacrt2ode}
\end{align}

We have thus solved the equation (\ref{exteqsprhoaac}), with 4 undetermined parameters $\alpha$, $\beta$, $\gamma$, $\delta$. We now have to prove that the function $h$ as obtained above satisfies all the regularity conditions as stated in Theorem \ref{abrcx}, and we claim that this holds once $\alpha$, $\beta$, $\gamma$, $\delta$ are chosen as in (\ref{defofalphap1pn}), (\ref{defofbetap1pn}), (\ref{defofgammap1pn}), (\ref{defofdeltap1pn}).

Before discussing the claimed regularity of $h''$, which we do in \S \ref{regsymppoth}, we define two polynomials $P (\rho)$ and $Q(\rho)$, with $\alpha$, $\beta$, $\gamma$, $\delta$ as parameters, as follows. They play an important role in what follows.

\begin{definition}
We define a polynomial $P (\rho)$ by
\begin{equation*}
P(\rho) := - \frac{2n+ \delta}{n(n-1)}    + \frac{(\delta - \gamma) \rho}{n(n+1)} +\frac{\gamma \rho^2}{(n+1)(n+2)}
\end{equation*}
and $Q(\rho)$ by
\begin{align*}
Q (\rho )&:=\rho^{n-1}- \rho^n - \rho^{n}P(\rho) - \alpha  - \beta \rho \\
&= - \frac{\gamma }{(n+1)(n+2)} \rho^{n+2} - \frac{\delta - \gamma }{n(n+1)} \rho^{n+1} - \left( 1-\frac{2n + \delta}{n(n-1)} \right) \rho^n + \rho^{n-1} - \alpha - \beta \rho ,
\end{align*}
so that we can write
\begin{equation} \label{hdpitmsofpq}
h'' (\rho) = \frac{\rho^{n+1} - \rho^{n} + \rho^{n} P(\rho) + \alpha  + \beta \rho}{ (1- \rho) \rho Q (\rho) } .
\end{equation}
\end{definition}

\subsubsection{Regularity of $h$} \label{regsymppoth}

The main technical result is the following.

\begin{proposition} \label{reghmaintr}
For $h$ as given by (\ref{solesteqaacrt2ode}), there exists a function $R (\rho)$ which is smooth on the whole of the polytope $\mathcal{P}_{\epsilon} (X)$ such that
\begin{equation*}
h (\rho) = (\rho - \epsilon) \log (\rho - \epsilon) + R (\rho) 
\end{equation*}
and that the Hessian of the symplectic potential
\begin{equation*}
s(x) = \frac{1}{2} \left( \sum_{i=1}^n x_i \log x_i + (1-r) \log (1-r) + h (\rho)  \right)
\end{equation*}
is positive definite over the interior $\mathcal{P}^{\circ}_{\epsilon} (X)$ of the polytope $\mathcal{P}_{\epsilon} (X)$, with the determinant of the form required in (\ref{abrcxthmhesssdet}), if we choose $\alpha$, $\beta$, $\gamma$, $\delta$ as in (\ref{defofalphap1pn}), (\ref{defofbetap1pn}), (\ref{defofgammap1pn}), (\ref{defofdeltap1pn}) and $\epsilon >0$ to be sufficiently small.
\end{proposition}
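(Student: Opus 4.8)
The plan is to analyze the behaviour of $h''(\rho)$, as given by the explicit formula \eqref{hdpitmsofpq}, near each facet of the polytope $\mathcal{P}_\epsilon(X)$ on which $\rho$ is non-constant, namely the two facets $\{\rho = \epsilon\}$ (the exceptional divisor) and $\{r=1\}$, together with the ``virtual'' behaviour at $\rho = 0$ and $\rho = 1$ which must \emph{not} contribute singularities since those loci are not facets of $\mathcal{P}_\epsilon(X)$. Concretely, $h''(\rho) = \dfrac{\rho^{n+1}-\rho^n + \rho^n P(\rho) + \alpha + \beta\rho}{(1-\rho)\,\rho\, Q(\rho)}$, so the singular structure of $h$ is governed entirely by the zeros of the denominator: the factor $\rho$, the factor $(1-\rho)$, and the roots of the degree-$(n+2)$ polynomial $Q(\rho)$. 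First I would impose the two requirements that force $\rho=0$ and $\rho=1$ to be removable: the numerator must vanish to the appropriate order at $\rho=0$, which forces a relation determining $\alpha$ (and, after one more derivative, $\beta$), and the residue at $\rho=1$ must cancel so that only the smooth $\log(1-r)$ already built into \eqref{symppotfsh} accounts for the facet $\{r=1\}$ — this is precisely where the formulas \eqref{defofalphap1pn}, \eqref{defofbetap1pn} come from. In parallel, the two constants $\gamma,\delta$ are the extremal-equation parameters; they are pinned down by the two linear ``moment'' constraints that the symplectic potential must satisfy for the polytope $\mathcal{P}_\epsilon(X)$ (equivalently, that the metric lie in the correct Kähler class and that the extremal affine function be the correct one), and carrying out that linear algebra yields the explicit rational expressions \eqref{defofgammap1pn}, \eqref{defofdeltap1pn}. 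This is the bookkeeping half of the proof, entirely parallel to \cite{abrrev, raza}.

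Next I would establish the factorization $h(\rho) = (\rho-\epsilon)\log(\rho-\epsilon) + R(\rho)$ with $R$ smooth on $\mathcal{P}_\epsilon(X)$. By Theorem \ref{guilbd}, the facet $\{\rho=\epsilon\}$ contributes exactly a term $\tfrac12 l_i\log l_i$ with $l_i = \rho - \epsilon$ to the canonical symplectic potential, and by Theorem \ref{abrcx} a legitimate symplectic potential differs from $s_{\mathcal P}$ by a globally smooth function. So the claim reduces to showing that, after subtracting the known boundary logs $\sum x_i\log x_i + (1-r)\log(1-r) + (\rho-\epsilon)\log(\rho-\epsilon)$, the remainder is smooth up to the boundary. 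For the $\sum x_i\log x_i + (1-r)\log(1-r)$ part this is automatic from the form of \eqref{symppotfsh}; the real point is that $h(\rho) - (\rho-\epsilon)\log(\rho-\epsilon)$ extends smoothly across $\rho=\epsilon$ and also across $\rho=1$ and $\rho=0$. Integrating \eqref{hdpitmsofpq} twice, the $\log(\rho-\epsilon)$ singularity will be present iff $Q(\epsilon)=0$; so I would \emph{verify the identity} $Q(\epsilon)=0$ (this should drop out of the defining relations for $\alpha,\beta,\gamma,\delta$ — indeed the choice of those constants is designed to make $\rho=\epsilon$ a simple root of $Q$), compute the residue there, and check it equals $1$ (with the factor $\tfrac12$) so that the coefficient of $(\rho-\epsilon)\log(\rho-\epsilon)$ in $h$ is exactly $1$; the requirement on $\delta$ in \eqref{defofdeltap1pn} encodes exactly this normalization. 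Simultaneously, smoothness at $\rho=0$ follows from the vanishing of the numerator of $h''$ to order $n$ there (guaranteed by \eqref{defofalphap1pn}, \eqref{defofbetap1pn}), and smoothness at $\rho=1$ from cancellation of the pole of $h''$ against the $(1-r)\log(1-r)$ term — more precisely one checks the numerator of $h''$ vanishes at $\rho=1$.

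The genuinely delicate step — and the one where the hypothesis $\epsilon \ll 1$ is indispensable — is proving that $\mathrm{Hess}(s)$ is positive definite on $\mathcal{P}^\circ_\epsilon(X)$ and that $\det\mathrm{Hess}(s)$ has the form \eqref{abrcxthmhesssdet}. Using the explicit block form of $s_{ij}$ computed in \S\ref{redexteq2ordlinode} and the Sylvester-type factorization of its determinant, positivity of the Hessian is equivalent to the two scalar positivity conditions: $\dfrac{1}{1-r}>0$ (trivial on the interior) and, after the $h''$-dependent Schur complement, the condition that $1+\rho(1-\rho)h'' > 0$ together with $A(\rho)>0$ type inequalities on $\epsilon < \rho < 1$; tracing through \eqref{hdpitmsofpq} these become the requirement that $Q(\rho)$ and the numerator of $h''$ have the correct, matching signs throughout the open interval $(\epsilon,1)$, and that $\delta(\rho):=1/[\text{(smooth positive)}\cdot\prod l_i]$ extends smoothly and positively — i.e. that $Q$ has \emph{no zero other than the simple one at $\rho=\epsilon$ and the appropriate behaviour near the other facets} inside the relevant range. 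For general $\epsilon$ the polynomial $Q$ (degree $n+2$, coefficients depending rationally on $\epsilon$) could acquire spurious real roots in $(\epsilon,1)$, which would destroy positivity; the strategy is a perturbation/limiting argument. As $\epsilon\to 0$ one checks that $\gamma,\delta$ converge to finite limits $\gamma_0,\delta_0$ (read off from \eqref{defofgammap1pn}, \eqref{defofdeltap1pn}) and that in the limit $Q$, $h''$, and hence $\mathrm{Hess}(s)$ converge, uniformly on compact subsets of the interior and in a controlled way near the boundary, to the data of a \emph{bona fide} extremal symplectic potential on the limiting polytope $\mathcal{P}(\prj^n)$ — essentially the known extremal (indeed Fubini–Study-adjacent) structure, whose Hessian is positive with the right determinant by the classical theory. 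One then argues that positivity and the determinant condition are open conditions stable under this small perturbation: the sign of $Q$ on $[\epsilon,1-\eta]$, the absence of extra roots, and the strict positivity of the function $\delta(x)$ in \eqref{abrcxthmhesssdet} all persist for $\epsilon$ sufficiently small by continuity, with the boundary behaviour near $\{r=1\}$ and $\{x_i=0\}$ handled uniformly since those facets are unaffected by $\epsilon$, and the new facet $\{\rho=\epsilon\}$ contributing precisely the simple root of $Q$ we isolated above. I expect the main obstacle to be exactly this uniform control: ruling out extraneous real roots of $Q(\rho)$ in $(\epsilon,1)$ and showing $Q'(\epsilon)\neq 0$ with the correct sign uniformly as $\epsilon\to 0$, since the limit $\epsilon\to 0$ degenerates the factor $\rho Q(\rho)$ in the denominator of $h''$ (the facet $\{\rho=\epsilon\}$ collides with $\{\rho=0\}$), so the estimates near that part of the boundary require care — presumably a rescaled variable $\rho/\epsilon$ or an explicit lower bound on $|Q(\rho)|$ of the form $c\,\rho^{n-1}(\rho-\epsilon)$ on the relevant range.
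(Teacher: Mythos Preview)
Your overall plan is reasonable, but the assignment of boundary conditions to the parameters is wrong, and this is not merely a bookkeeping slip.

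First, $\rho = 0$ is not in the polytope: $\mathcal{P}_\epsilon(X)$ is cut out by $\rho \ge \epsilon > 0$, so there is no regularity condition at $\rho=0$, and indeed the numerator of $h''$ at $\rho=0$ equals $\alpha$, which by \eqref{defofalphap1pn} is nonzero. The paper fixes $\alpha$ and $\beta$ by the behaviour at $\rho = 1$, not $\rho=0$. Moreover, you conflate $\rho=1$ with the facet $\{r=1\}$; these are different ($r=\rho+x_n$), and $\{\rho=1\}$ is the codimension-two face $\{x_n=0\}\cap\{r=1\}$. The term $(1-r)\log(1-r)$ in \eqref{symppotfsh} does not interact with the singularity of $h''(\rho)$ at $\rho=1$; $h$ must simply be smooth there. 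And ``the numerator of $h''$ vanishes at $\rho=1$'' is not enough: the denominator $(1-\rho)\rho Q(\rho)$ has a zero of order three at $\rho=1$ (the explicit factor $(1-\rho)$ together with $Q(1)=Q'(1)=0$, $Q''(1)=2$), so the numerator must vanish to order at least three. This third-order vanishing is what \eqref{defofalphap1pn}--\eqref{defofbetap1pn} encode (Lemma~\ref{rho1remsingab}).

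Second, $\gamma$ and $\delta$ are not fixed by moment constraints. In the paper $\gamma$ is chosen so that $Q(\epsilon)=0$ (forcing the simple pole of $h''$ at $\rho=\epsilon$), and then $\delta$ so that $Q'(\epsilon)=\epsilon^{n-2}(1-\epsilon)$, which makes the residue of $h''$ there equal to $1$ (Lemma~\ref{lempseofhdp}). You half-acknowledge this in your second paragraph, contradicting your first; but if you actually tried to derive \eqref{defofgammap1pn}--\eqref{defofdeltap1pn} from integral conditions you would not get there directly. Your sketch of the positivity argument is in the right spirit: the paper shows $\alpha,\beta,\gamma,\delta+n(n+1)=O(\epsilon^2)$, proves $Q>0$ on $(\epsilon,1)$ by separate estimates on $(\epsilon,\tfrac{1}{2n})$ and $[\tfrac{1}{2n},1)$, and then obtains positive-definiteness not via the Schur-complement inequalities you list but by the simpler observation that $s_{ij}=s^{FS}_{ij}+\tfrac{h''}{2}T_{ij}$ with $T$ positive semi-definite, so a uniform lower bound $h''(\rho) > -\epsilon C$ suffices.
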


\begin{proof}
Recall from (\ref{hdpitmsofpq}) that $h''$ is given by
\begin{equation*}
h'' (\rho) = \frac{\rho^{n+1} - \rho^{n} + \rho^{n} P(\rho) + \alpha  + \beta \rho}{ (1- \rho) \rho Q (\rho) } .
\end{equation*}

We first need to prove that $\rho =1$ is a removable singularity. In Lemma \ref{rho1remsingab}, we shall prove that this is indeed the case, once we choose $\alpha$ and $\beta$ as in (\ref{defofalphap1pn}), (\ref{defofbetap1pn}).

We then consider the asymptotic behaviour of $h''$ as $\rho \to \epsilon$. We now write
\begin{equation*}
h'' (\rho) = \frac{1}{\rho}  \frac{1}{\rho -1} + \frac{\rho^{n-2} (1- \rho)}{Q (\rho)}
\end{equation*}
and consider the Taylor expansion
\begin{equation*}
Q (\rho) = Q_{0} + Q_{1} (\rho - \epsilon) + \cdots
\end{equation*}
of $Q (\rho)$ around $\rho = \epsilon$, with some $Q_{0} , Q_{1} \in \rl$. Writing now
\begin{equation*}
h''(\rho) = \frac{1}{\rho}  \frac{1}{\rho -1} + \frac{\rho^{n-2} (1- \rho)}{Q_{0} + Q_{1} (\rho - \epsilon) + \cdots} 
\end{equation*}
around $\rho = \epsilon$, our strategy is to show that, for the choice of $\gamma$ and $\delta$ as in (\ref{defofgammap1pn}), (\ref{defofdeltap1pn}), we have a Laurent expansion
\begin{equation}
h'' (\rho)= \frac{1}{\rho - \epsilon} + \hat{Q}_{0} + \hat{Q}_{1} (\rho - \epsilon) + \cdots \label{expofhppatrhoeps}
\end{equation}
in $\rho - \epsilon$, with some $\hat{Q}_0 , \hat{Q}_1 \in \rl$. This will be proved in Lemma \ref{lempseofhdp}. We shall also prove in Lemma \ref{lemposqeps} that $Q (\rho) >0$ on $(\epsilon , 1)$, for these choices of $\alpha$, $\beta$, $\gamma$, $\delta$ and sufficiently small $\epsilon >0 $. Since $\rho =1$ is a removable singularity, this means that $h''$ is smooth on the whole polytope except for a pole of order 1 and residue 1 at $\rho = \epsilon$.

We now consider a function
\begin{equation*}
\tilde{R}(\rho) := h'' (\rho)-  \frac{1}{\rho - \epsilon} .
\end{equation*}
This is smooth on the whole of the polytope $\mathcal{P}_{\epsilon} (X)$ by the above properties of $h''$, and hence integrating both sides twice, we get a function $R(\rho)$ that is smooth on the whole polytope which satisfies
\begin{equation*}
R(\rho) = h (\rho) - (\rho - \epsilon) \log (\rho - \epsilon),
\end{equation*}
as we claimed. Finally, we shall prove in Lemma \ref{lemsijposdef} that the Hessian of the symplectic potential 
\begin{equation*}
s(x) = \frac{1}{2} \left( \sum_{i=1}^n x_i \log x_i + (1-r) \log (1-r) + h (\rho)  \right)
\end{equation*}
is indeed positive definite over the interior $\mathcal{P}^{\circ}_{\epsilon} (X)$ of the polytope $\mathcal{P}_{\epsilon} (X)$ and has determinant of the form required in (\ref{abrcxthmhesssdet}), for the above choices of $\alpha$, $\beta$, $\gamma$, $\delta$ and sufficiently small $\epsilon >0 $.

Therefore, granted Lemmas \ref{rho1remsingab}, \ref{lempseofhdp}, \ref{lemposqeps}, and \ref{lemsijposdef} to be proved below, we complete the proof of the proposition.

\end{proof}

\begin{lemma} \label{rho1remsingab}
For the choice of $\alpha$ and $\beta$ as in (\ref{defofalphap1pn}), (\ref{defofbetap1pn}), the following hold:
\begin{enumerate}
\item the numerator $\rho^{n+1} - \rho^{n} + \rho^{n} P(\rho) + \alpha  + \beta \rho$ of $h''$ has a zero of order at least 3 at $\rho =1$,
\item $Q(1) = Q'(1)=0$, $Q''(1) =2$; in particular, $Q$ has a zero of order exactly 2 at $\rho =1$.
\end{enumerate}
In particular, $\rho =1$ is a removable singularity of $h''$ if we choose $\alpha$ and $\beta$ as in (\ref{defofalphap1pn}), (\ref{defofbetap1pn}).
\end{lemma}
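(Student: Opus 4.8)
The plan is to reduce both assertions to a single algebraic identity. Write $N(\rho) := \rho^{n+1} - \rho^{n} + \rho^{n} P(\rho) + \alpha + \beta \rho$ for the numerator of $h''$ appearing in (\ref{hdpitmsofpq}). Directly from the definitions of $P$ and $Q$ one verifies, for \emph{all} values of $\alpha,\beta,\gamma,\delta$, the identity
\[
N(\rho) + Q(\rho) = \rho^{n-1} - 2\rho^{n} + \rho^{n+1} = \rho^{n-1}(1-\rho)^2 .
\]
Since $\rho^{n-1}(1-\rho)^2$ vanishes to order exactly $2$ at $\rho = 1$, with second derivative equal to $2$ there (a one-line check via the product rule), this identity makes the two items logically equivalent: because $N = \rho^{n-1}(1-\rho)^2 - Q$ gives $N(1) = -Q(1)$, $N'(1) = -Q'(1)$, $N''(1) = 2 - Q''(1)$, item (1) — i.e.\ $N(1) = N'(1) = N''(1) = 0$ — holds if and only if $Q(1) = Q'(1) = 0$ and $Q''(1) = 2$, which is item (2). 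So it is enough to prove item (2).

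To establish item (2), write $Q(\rho) = Q_0(\rho) - \alpha - \beta\rho$, where $Q_0(\rho) := \rho^{n-1} - \rho^{n} - \rho^{n}P(\rho)$ depends only on $\gamma,\delta$; hence $Q''(1) = Q_0''(1)$, $Q'(1) = Q_0'(1) - \beta$, and $Q(1) = Q_0(1) - \alpha - \beta$. Differentiating the polynomial $\rho^{n}P(\rho) = -\tfrac{2n+\delta}{n(n-1)}\rho^{n} + \tfrac{\delta-\gamma}{n(n+1)}\rho^{n+1} + \tfrac{\gamma}{(n+1)(n+2)}\rho^{n+2}$ twice and evaluating at $\rho = 1$ gives $-2n$, independently of $\gamma,\delta$ (the $\delta$- and $\gamma$-contributions cancel), so
\[
Q_0''(1) = (n-1)(n-2) - n(n-1) + 2n = 2
\]
identically. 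This is precisely the internal consistency forced by having derived (\ref{exteqaacredto2olode}) from Abreu's equation, and it is the only point at which the detailed form of $P$ is used. Therefore $Q''(1) = 2$ automatically, and the remaining two conditions $Q'(1) = 0$, $Q(1) = 0$ determine $\beta := Q_0'(1)$ and then $\alpha := Q_0(1) - \beta$. Computing $Q_0'(1)$ and $Q_0(1)$ — an elementary manipulation using $\tfrac{2n}{n-1} = 2 + \tfrac{2}{n-1}$ together with partial fractions such as $\tfrac{1}{n-1} - \tfrac{1}{n} = \tfrac{1}{n(n-1)}$ and $\tfrac{1}{n(n+1)} - \tfrac{1}{(n+1)(n+2)} = \tfrac{2}{n(n+1)(n+2)}$ — one checks that these values coincide exactly with the closed forms (\ref{defofbetap1pn}) for $\beta$ and (\ref{defofalphap1pn}) for $\alpha$, which is what the lemma asserts.

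Finally, the removable-singularity conclusion follows at once: with $\alpha,\beta$ so chosen, $(1-\rho)\,\rho\, Q(\rho)$ vanishes to order exactly $3$ at $\rho = 1$ (order $1$ from $1-\rho$, order $2$ from $Q$ since $Q''(1) = 2 \neq 0$, and $\rho = 1 \neq 0$), while by item (1) the numerator $N(\rho)$ vanishes there to order at least $3$; hence $h'' = N/\bigl((1-\rho)\rho\, Q\bigr)$ extends smoothly across $\rho = 1$. I expect no genuine obstacle: the conceptual work is done entirely by the identity $N + Q = \rho^{n-1}(1-\rho)^2$, which collapses three vanishing conditions onto the two free parameters $\alpha,\beta$ together with the automatic identity $Q_0''(1) = 2$; the only step demanding care is the bookkeeping that matches $Q_0'(1)$ and $Q_0(1) - Q_0'(1)$ to the stated expressions for $\beta$ and $\alpha$.
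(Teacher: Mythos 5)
Your proof is correct, and it is essentially the paper's argument repackaged: the identity $N+Q=\rho^{n-1}(1-\rho)^2$ is built into the very definition of $Q$, your automatic identity $Q_0''(1)=2$ is the paper's observation that $N''(1)=0$ holds for all $\gamma,\delta$ (equation (\ref{zeropatrho13})), and the two linear conditions you solve for $\beta=Q_0'(1)$ and $\alpha=Q_0(1)-\beta$ are exactly the paper's equations (\ref{eqforalphap1pn})--(\ref{eqforbetap1pn}). The only (cosmetic) difference is that you work on $Q$ first and deduce the statement about the numerator, whereas the paper does the reverse.
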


\begin{proof}
The numerator $\rho^{n+1} - \rho^{n} + \rho^{n} P(\rho) + \alpha  + \beta \rho$ of $h''$ has a zero at $\rho =1$ if and only if $P(1) + \alpha + \beta =0$. We thus choose $\beta = - \alpha -P(1)$. The zero is of order at least two if and only if $1+nP(1) + P'(1) + \beta = 0$, in addition to $\beta = - \alpha -P(1)$. We thus choose $\alpha$ by the equation
\begin{equation} \label{eqforalphap1pn}
1+nP(1) + P'(1) + (- \alpha -P(1)) = 0
\end{equation}
and $\beta$ by the equation
\begin{equation} \label{eqforbetap1pn}
\beta = - \alpha -P(1) = -1- (n-1) P(1) - P'(1) - P(1)
\end{equation}
by noting that $P(1)$ and $P'(1)$ depend only on $\gamma$ and $\delta$.

Finally, we observe
\begin{align}
\left. \frac{d^2}{d \rho^2} \right|_{\rho=1}  (\rho^{n+1} - \rho^{n} + \rho^{n} P(\rho) + \alpha  + \beta \rho) \notag 
&=  2n +n(n-1) P(1) + 2nP'(1) + P''(1) \notag \\
&=0  \label{zeropatrho13}
\end{align}
identically for any choice of $\gamma$ and $\delta$. Thus the numerator $\rho^{n+1} - \rho^{n} + \rho^{n} P(\rho) + \alpha  + \beta \rho$ of $h''$ vanishes at $\rho=1$ with order at least 3, if $\alpha$, $\beta$ are chosen as in the equations (\ref{eqforalphap1pn}), (\ref{eqforbetap1pn}). We now unravel the equations (\ref{eqforalphap1pn}) and (\ref{eqforbetap1pn}), to find that they are exactly as given in (\ref{defofalphap1pn}) and (\ref{defofbetap1pn}).


 We have thus established the first claim in the lemma: the numerator $\rho^{n+1} - \rho^{n} + \rho^{n} P(\rho) + \alpha  + \beta \rho$ of $h''$ has a zero of order at least 3 at $\rho =1$ if $\alpha$, $\beta$ are chosen as (\ref{defofalphap1pn}) and (\ref{defofbetap1pn}).

The second claim of the lemma is an easy consequence of the equations (\ref{eqforalphap1pn}), (\ref{eqforbetap1pn}), (\ref{zeropatrho13}): we simply compute $Q(1) = 1- 1- P(1) - \alpha - \beta =0$ and $Q'(1) = (n-1) -n -nP(1)-P'(1) -\beta =0$, by virtue of (\ref{eqforalphap1pn}) and (\ref{eqforbetap1pn}). We finally have $Q''(1) = 2$ by (\ref{zeropatrho13}).


\end{proof}

\begin{lemma} \label{lempseofhdp}
We have the expansion (\ref{expofhppatrhoeps}), namely we have the Laurent expansion
\begin{equation*}
h'' (\rho) = \frac{1}{\rho - \epsilon} + \hat{Q}_{0} + \hat{Q}_{1} (\rho - \epsilon) + \cdots 
\end{equation*}
in $\rho - \epsilon$, if we choose $\alpha$, $\beta$, $\gamma$, $\delta$ as in (\ref{defofalphap1pn}), (\ref{defofbetap1pn}), (\ref{defofgammap1pn}), (\ref{defofdeltap1pn}), and if $\epsilon$ is sufficiently small.
\end{lemma}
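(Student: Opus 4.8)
The plan is to reduce the claimed Laurent expansion to two explicit scalar conditions on the polynomial $Q$, and then to check that these conditions, read as a linear system in $\gamma$ and $\delta$, are solved precisely by the formulas (\ref{defofgammap1pn}) and (\ref{defofdeltap1pn}). Recall from the proof of Proposition \ref{reghmaintr} that, using the identity $\rho^{n+1} - \rho^n + \rho^n P(\rho) + \alpha + \beta\rho = \rho^{n-1}(1-\rho)^2 - Q(\rho)$ (which is immediate from the definition of $Q$), one may rewrite
\[
h''(\rho) = \frac{1}{\rho}\,\frac{1}{\rho - 1} + \frac{\rho^{n-2}(1-\rho)}{Q(\rho)}.
\]
Since $0 < \epsilon < 1$, the first summand is real-analytic in a neighbourhood of $\rho = \epsilon$ and so contributes only to the regular part of the expansion; the singular behaviour of $h''$ at $\rho = \epsilon$ is therefore governed entirely by the rational function $\rho^{n-2}(1-\rho)/Q(\rho)$.

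Next I would carry out the residue computation. Writing $Q(\rho) = Q_0 + Q_1(\rho - \epsilon) + O((\rho-\epsilon)^2)$ with $Q_0 = Q(\epsilon)$ and $Q_1 = Q'(\epsilon)$, the term $\rho^{n-2}(1-\rho)/Q(\rho)$ is real-analytic at $\rho = \epsilon$ when $Q_0 \neq 0$, whereas if $Q_0 = 0$ and $Q_1 \neq 0$ it has a pole of order exactly one there (its numerator $\rho^{n-2}(1-\rho)$ being nonzero at $\rho = \epsilon$) with residue $\epsilon^{n-2}(1-\epsilon)/Q_1$. Requiring the residue of $h''$ at $\rho = \epsilon$ to be exactly $1$ thus forces
\[
Q(\epsilon) = 0, \qquad Q'(\epsilon) = \epsilon^{n-2}(1-\epsilon),
\]
and the second equality automatically gives $Q_1 \neq 0$ for $0 < \epsilon < 1$, so these two conditions do produce a genuine simple pole with residue $1$. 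Expanding the remaining real-analytic part of $h''$ in powers of $\rho - \epsilon$ then yields precisely the expansion (\ref{expofhppatrhoeps}), and it only remains to arrange the two conditions $Q(\epsilon) = 0$ and $Q'(\epsilon) = \epsilon^{n-2}(1-\epsilon)$.

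Finally I would solve these for $\gamma$ and $\delta$. By Lemma \ref{rho1remsingab} the constants $\alpha$ and $\beta$ are already determined as affine functions of $(\gamma, \delta)$ through (\ref{defofalphap1pn}), (\ref{defofbetap1pn}); substituting these into the definition of $Q$ makes both $Q(\epsilon)$ and $Q'(\epsilon)$ affine in $(\gamma, \delta)$, so the pair of conditions above is a $2 \times 2$ linear system. Its determinant is, up to a nonzero constant factor, the polynomial $-n\epsilon^{n+2} + (n+2)\epsilon^{n+1} + n - (n+2)\epsilon$ that appears in the denominators of (\ref{defofgammap1pn}) and (\ref{defofdeltap1pn}); its derivative in $\epsilon$ equals $-(n+2)F_{n+1}$, which is strictly negative on $(0,1)$ by Lemma \ref{elempropforfm}, so the polynomial decreases from $n$ to $0$ on $[0,1)$ and is in particular strictly positive there, hence bounded away from $0$ for $\epsilon$ small. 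The system therefore has a unique solution, and computing it by Cramer's rule and simplifying gives exactly (\ref{defofgammap1pn}) and (\ref{defofdeltap1pn}). The only step that requires genuine work rather than routine bookkeeping is this last algebraic simplification — matching the closed form of the linear system's solution with the stated expressions for $\gamma$ and $\delta$ — but it presents no conceptual difficulty.
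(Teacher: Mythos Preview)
Your overall strategy is the same as the paper's: reduce the claimed Laurent expansion to the two scalar conditions $Q(\epsilon)=0$ and $Q'(\epsilon)=\epsilon^{n-2}(1-\epsilon)$, and then solve the resulting affine system for $(\gamma,\delta)$. The reduction itself is correct and matches the paper exactly.

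There is, however, a concrete error in your solvability argument. The assertion that the determinant of the $2\times 2$ linear system in $(\gamma,\delta)$ equals, up to a nonzero constant, the polynomial $p(\epsilon):=-n\epsilon^{n+2}+(n+2)\epsilon^{n+1}+n-(n+2)\epsilon$ is false. A direct count of degrees shows the determinant is a polynomial of degree $2n+2$ in $\epsilon$ (the leading coefficient is $-1/\bigl(n(n+1)^2(n+2)\bigr)$, which does not vanish), whereas $p$ has degree $n+2$. Consequently your derivative argument with $F_{n+1}$, while perfectly correct as a proof that $p(\epsilon)>0$ on $[0,1)$, does not establish that the determinant is nonzero, and the solvability step is left unjustified. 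Note also that Cramer's rule would give $\gamma$ as a function of $\epsilon$ alone, whereas the stated formula (\ref{defofgammap1pn}) expresses $\gamma$ in terms of $\delta$; so the ``match by Cramer'' step is not quite the right way to recover (\ref{defofgammap1pn}).

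The paper handles this by back-substitution rather than Cramer. First it solves $Q(\epsilon)=0$ for $\gamma$ in terms of $\delta$, which requires only $p(\epsilon)\neq 0$ (your $F_{n+1}$ argument actually proves this for every $\epsilon\in[0,1)$, a slight improvement over the paper's continuity-at-$0$ remark). Substituting this $\gamma$ into $Q'(\epsilon)=\epsilon^{n-2}(1-\epsilon)$ yields an affine equation in $\delta$ whose coefficient is checked at $\epsilon=0$ to equal $-2/\bigl(n^2(n-1)(n+1)\bigr)\neq 0$, hence is nonzero for all sufficiently small $\epsilon$ by continuity; this is what forces the hypothesis ``$\epsilon$ sufficiently small'' in the lemma. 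Equivalently, you could compute the $2\times 2$ determinant at $\epsilon=0$ directly (it equals $-2/\bigl(n^2(n-1)(n+1)^2(n+2)\bigr)$) and invoke continuity---but you cannot replace that computation by the positivity of $p$.
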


\begin{proof}
We first consider the Taylor expansion
\begin{equation} \label{texpofqreps}
Q (\rho) = Q_{0} + Q_{1} (\rho - \epsilon) + \cdots
\end{equation}
of $Q (\rho)$ around $\rho = \epsilon$, with $Q_{0} , Q_{1} \in \rl$. When we have $\alpha$ and $\beta$ as defined in (\ref{defofalphap1pn}) and (\ref{defofbetap1pn}), we find the 0th order term $Q_{0}$, which is equal to $Q(\epsilon)$, to be
\begin{align*}
Q_0 &=- \frac{\gamma }{(n+1)(n+2)} \epsilon^{n+2} - \frac{\delta - \gamma }{n(n+1)} \epsilon^{n+1} - \left( 1-\frac{2n + \delta}{n(n-1)} \right) \epsilon^n + \epsilon^{n-1} - \alpha - \beta \epsilon \\
&=\left( \frac{-n \epsilon^{n+2} + (n+2) \epsilon^{n+1} +n - (n+2) \epsilon}{n(n+1)(n+2)}  \right) \gamma \\
&\ \ \ \ \ \ + \left( \frac{1- \epsilon^{n+1} }{n(n+1)} + \frac{\epsilon^n - \epsilon}{n(n-1)}  \right) \delta  +1 - \frac{n+1}{n-1} \epsilon + \epsilon^{n-1} - \frac{n-3}{n-1} \epsilon^n.
\end{align*}
We choose $\gamma$ as in (\ref{defofgammap1pn}), 
so that $Q_0 =0$; note that $-n \epsilon^{n+2} + (n+2) \epsilon^{n+1} +n - (n+2) \epsilon \neq 0$ if $\epsilon$ is chosen to be sufficiently small. This means that we can write
\begin{align*}
h'' (\rho)&= \frac{1}{\rho}  \frac{1}{\rho -1} + \frac{\rho^{n-2} (1- \rho)}{Q(\rho)} \\
&= \frac{\rho^{n-2} (1- \rho)}{Q_1} \frac{1}{\rho - \epsilon} + \text{power series in } \rho - \epsilon,
\end{align*}
near $\rho = \epsilon$. In order to prove the stated claim, we need to show that the residue at the pole $\rho =\epsilon$ of $h''$ is 1. We prove this by showing $Q_1 = \epsilon^{n-2} (1 - \epsilon)$ for an appropriate choice of $\delta$, with $\alpha$, $\beta$, and $\gamma$ as determined in the above.

We thus consider the coefficient $Q_1$ in the expansion (\ref{texpofqreps}), which is equal to $\frac{d}{d \rho} \mid_{\rho = \epsilon} Q(\rho)$, i.e.
\begin{equation*}
Q_1=- \frac{\gamma }{n+1} \epsilon^{n+1} - \frac{\delta - \gamma }{n} \epsilon^{n} - \left( n-\frac{2n + \delta}{n-1} \right) \epsilon^{n-1}   +(n-1)\epsilon^{n-2} -\beta .
\end{equation*}
For the choice of $\beta$ and $\gamma$ as in (\ref{defofbetap1pn}) and (\ref{defofgammap1pn}), we can re-write this as
\begin{align}
Q_1= & \left[ \frac{(-n \epsilon^{n+1} + (n+1) \epsilon^{n} -1)(n+2)}{-n \epsilon^{n+2} + (n+2) \epsilon^{n+1} +n - (n+2) \epsilon} \left( \frac{\epsilon^{n+1} -1}{n(n+1)} + \frac{\epsilon - \epsilon^n }{n(n-1)}  \right) \right. \notag \\
& \ \ \ \ \ \ \ \ \ \ \ \ \ \ \ \ \ \ \ \ \ \ \ \ \ \ \ \ \ \ \ \ \ \ \ \ \ \ \ \ \ \ \ \ \ \ \ \ \ \ \ \ \ \ \ \ \ \ \ \ \ \ \ \ \ \ \ \ \ \ \ \ \ \left. + \frac{-(n-1) \epsilon^n +n \epsilon^{n-1} -1}{n(n-1)}    \right]                      \delta \notag \\
&\ \ \ \ \ \ \ \ \ \ \ \ \ + \frac{(-n \epsilon^{n+1} + (n+1) \epsilon^{n} -1)(n+2)}{-n \epsilon^{n+2} + (n+2)  \epsilon^{n+1} +n - (n+2) \epsilon} \left( - 1 + \frac{n+1}{n-1} \epsilon - \epsilon^{n-1} + \frac{n-3}{n-1} \epsilon^n   \right) \notag \\
& \ \ \ \ \ \ \ \ \ \ \ \ \ \ \ \ \ \ \ \ \ \ \ \ \ \ \ \ \ \ \ \ \ \ \ \ \ \ \ \ \ \ \ \ \ \ \ \ \ \ \ \ \ \ \ \ \ \ \ \ \ \ \ \ - \frac{n(n-3)}{n-1} \epsilon^{n-1} +(n-1) \epsilon^{n-2} - \frac{n+1}{n-1} . \label{eqdefofdeltaeps}
\end{align}

The equation $Q_1 = \epsilon^{n-2} (1 - \epsilon)$ can be solved for $\delta$ if and only if the coefficient of $\delta$ in the equation (\ref{eqdefofdeltaeps}) is not zero, i.e.
\begin{align*}
 &\frac{(-n \epsilon^{n+1} + (n+1) \epsilon^{n} -1)(n+2)}{-n \epsilon^{n+2} + (n+2) \epsilon^{n+1} +n - (n+2) \epsilon} \left( \frac{\epsilon^{n+1} -1}{n(n+1)} + \frac{\epsilon - \epsilon^n }{n(n-1)}  \right) + \frac{-(n-1) \epsilon^n +n \epsilon^{n-1} -1}{n(n-1)} \\
&\ \ \ \ \ \ \ \ \ \ \ \ \ \ \ \ \  \ \ \ \ \ \ \ \ \ \ \ \ \ \ \ \ \  \ \ \ \ \ \ \ \ \ \ \ \ \ \ \ \ \  \ \ \ \ \ \ \ \ \ \ \ \ \ \ \  \ \ \ \ \ \ \ \ \ \ \ \ \ \ \  \ \ \ \ \ \ \ \ \ \ \ \ \ \ \ \ \  \ \ \ \ \ \ \ \ \ \ \ \ \ \ \ \ \   \neq 0 .
\end{align*}
Note that the left hand side is equal to $\frac{-2}{n^2(n-1)(n+1)} \neq 0$ when $\epsilon = 0$, and hence this is non-zero for all sufficiently small $\epsilon> 0$ by continuity. Hence the equation $Q_1 = \epsilon^{n-2} (1 - \epsilon)$ can be solved for $\delta$, with the solution as given in (\ref{defofdeltap1pn}), if $\epsilon >0$ is sufficiently small. We thus obtain the claimed expansion
\begin{equation*}
h'' (\rho)= \frac{1}{\rho - \epsilon} + \hat{Q}_{0} + \hat{Q}_{1} (\rho - \epsilon) + \cdots  
\end{equation*}
near $\rho = \epsilon$, if we choose $\alpha$, $\beta$, $\gamma$, $\delta$ as in (\ref{defofalphap1pn}), (\ref{defofbetap1pn}), (\ref{defofgammap1pn}), (\ref{defofdeltap1pn}) and $\epsilon$ to be sufficiently small.

\end{proof}

Note that $Q_0 =0$ (resp. $Q_1 = \epsilon^{n-2} (1 - \epsilon)$) proved in the above is equivalent to saying $Q(\epsilon)=0$ (resp. $Q' (\epsilon) =  \epsilon^{n-2} (1 - \epsilon)$). Together with what was proved in Lemma \ref{rho1remsingab}, we summarise below the properties of the polynomial $Q(\rho)$ that we have established so far.

\begin{lemma} \label{lempropqp1pn}
For the choice of $\alpha$, $\beta$, $\gamma$, $\delta$ as in (\ref{defofalphap1pn}), (\ref{defofbetap1pn}), (\ref{defofgammap1pn}), (\ref{defofdeltap1pn}) and sufficiently small $\epsilon $, the polynomial $Q(\rho)$ satisfies the following properties:
\begin{enumerate}
\item $Q(1) = Q'(1)=0$, $Q''(1)=2$,
\item $Q (\epsilon) = 0$, $Q' (\epsilon) = \epsilon^{n-2} (1 -\epsilon)$.
\end{enumerate}
\end{lemma}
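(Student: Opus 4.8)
The plan is to observe that this lemma is purely a bookkeeping statement: all four assertions have already been verified in the course of proving Lemmas \ref{rho1remsingab} and \ref{lempseofhdp}, and the only task is to translate those conclusions — which were phrased in terms of the numerator of $h''$ and of the Taylor coefficients of $Q$ around $\rho = \epsilon$ — back into statements about $Q$ and its derivatives at the two distinguished points $\rho = 1$ and $\rho = \epsilon$.

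For the first item I would simply quote the second claim of Lemma \ref{rho1remsingab}. There, with $\alpha$ and $\beta$ chosen as in (\ref{defofalphap1pn}) and (\ref{defofbetap1pn}) — equivalently, satisfying (\ref{eqforalphap1pn}) and (\ref{eqforbetap1pn}) — it was computed that $Q(1) = 1 - 1 - P(1) - \alpha - \beta = 0$ and $Q'(1) = (n-1) - n - nP(1) - P'(1) - \beta = 0$, while $Q''(1) = 2$ follows from the identity (\ref{zeropatrho13}), which holds for every $\gamma$ and $\delta$. No new computation is required.

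For the second item I would use the remark immediately preceding the lemma: writing $Q(\rho) = Q_0 + Q_1(\rho - \epsilon) + \cdots$, one has $Q_0 = Q(\epsilon)$ and $Q_1 = Q'(\epsilon)$. In the proof of Lemma \ref{lempseofhdp} the constant $\gamma$ was defined by (\ref{defofgammap1pn}) precisely so that $Q_0 = 0$, which is legitimate since $-n\epsilon^{n+2} + (n+2)\epsilon^{n+1} + n - (n+2)\epsilon \neq 0$ for $\epsilon$ small; and $\delta$ was then defined by (\ref{defofdeltap1pn}) so that $Q_1 = \epsilon^{n-2}(1-\epsilon)$, which is solvable because the coefficient of $\delta$ in (\ref{eqdefofdeltaeps}) equals $-2/(n^2(n-1)(n+1))$ at $\epsilon = 0$ and hence is nonzero for $\epsilon$ small. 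Thus $Q(\epsilon) = 0$ and $Q'(\epsilon) = \epsilon^{n-2}(1-\epsilon)$.

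The only subtlety — and the reason the hypothesis ``$\epsilon$ sufficiently small'' appears in the statement — is the pair of nonvanishing assertions above; each is a routine continuity argument from the explicitly computed $\epsilon = 0$ value, so there is no genuine obstacle, and the lemma follows at once.
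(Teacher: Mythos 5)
Your proposal is correct and matches the paper exactly: the paper itself presents this lemma as a summary, noting just before its statement that $Q_0=0$ and $Q_1=\epsilon^{n-2}(1-\epsilon)$ from Lemma \ref{lempseofhdp} are equivalent to $Q(\epsilon)=0$ and $Q'(\epsilon)=\epsilon^{n-2}(1-\epsilon)$, and combining this with the second claim of Lemma \ref{rho1remsingab} for the behaviour at $\rho=1$. Your identification of the two nonvanishing conditions as the only place where ``$\epsilon$ sufficiently small'' is used is also accurate.
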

We also need the following estimates of $\alpha$, $\beta$, $\gamma$, $\delta$ in the later argument.
\begin{lemma} \label{lemestabcdine}
We can estimate $\alpha = O(\epsilon^2)$, $\beta = O(\epsilon^2)$, $\gamma =  O(\epsilon^2)$, $\delta = -n(n+1) +O(\epsilon^2)$, when $\epsilon$ is sufficiently small.
\end{lemma}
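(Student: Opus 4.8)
The plan is to treat $\alpha,\beta,\gamma,\delta$ simply as the explicit rational functions of $\epsilon$ defined by (\ref{defofalphap1pn})--(\ref{defofdeltap1pn}) and to read off their Taylor expansions at $\epsilon=0$. The preliminary point to record is that all the denominators involved are non-zero at $\epsilon=0$: the denominator $-n\epsilon^{n+2}+(n+2)\epsilon^{n+1}+n-(n+2)\epsilon$ in (\ref{defofgammap1pn}) and (\ref{defofdeltap1pn}) equals $n$ there, and the coefficient of $\delta$ in (\ref{eqdefofdeltaeps}) equals $-2/(n^2(n-1)(n+1))$ there, as was already noted in the proof of Lemma \ref{lempseofhdp}. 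Hence $\delta$, and then $\gamma$ through (\ref{defofgammap1pn}) and $\alpha,\beta$ through (\ref{defofalphap1pn}), (\ref{defofbetap1pn}), are real-analytic in $\epsilon$ near $0$, so it suffices to compute their values and first derivatives at $\epsilon=0$.

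For the values at $\epsilon=0$, since $n\ge 3$ every explicit power $\epsilon^{n-2},\epsilon^{n-1},\dots$ vanishes there, and (\ref{defofdeltap1pn}) collapses to
\[
\delta|_{\epsilon=0}=\Bigl(-\tfrac{n+2}{n}+\tfrac{n+1}{n-1}\Bigr)\Big/\Bigl(\tfrac{n+2}{n^2(n+1)}-\tfrac{1}{n(n-1)}\Bigr)=\tfrac{2}{n(n-1)}\Big/\Bigl(-\tfrac{2}{n^2(n-1)(n+1)}\Bigr)=-n(n+1).
\]
Feeding this into (\ref{defofgammap1pn}) makes the bracketed numerator equal to $n(n+1)(n+2)\bigl(\tfrac{-1}{n(n+1)}\cdot(-n(n+1))-1\bigr)=0$, so $\gamma|_{\epsilon=0}=0$, and then (\ref{defofalphap1pn}), (\ref{defofbetap1pn}) give $\alpha|_{\epsilon=0}=-1+1=0$ and $\beta|_{\epsilon=0}=\tfrac{n+1}{n-1}-\tfrac{n+1}{n-1}=0$. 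This already pins down the claimed leading behaviour; the remaining, more delicate point is the vanishing of the $\epsilon$-linear terms, i.e. the refinement from $O(\epsilon)$ to $O(\epsilon^2)$.

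For that refinement I would avoid differentiating the unwieldy closed forms and instead exploit the fact (assembled from Lemmas \ref{rho1remsingab} and \ref{lempseofhdp}) that $(\alpha,\beta,\gamma,\delta)$ is the unique solution of the $4\times4$ linear system $Q(1)=0$, $Q'(1)=0$, $Q(\epsilon)=0$, $Q'(\epsilon)=\epsilon^{n-2}(1-\epsilon)$, in which $Q$ depends affine-linearly on the four unknowns. Plugging in the limiting values $(\alpha,\beta,\gamma,\delta)=(0,0,0,-n(n+1))$ produces exactly $Q_0(\rho)=\rho^{n-1}(\rho-1)^2$, which is why those values solve the $\epsilon=0$ system. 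Writing $Q=Q_0+\tilde Q$, the correction $\tilde Q$ (with parameters $(\alpha,\beta,\gamma,\delta)-(0,0,0,-n(n+1))$) must satisfy $\tilde Q(1)=\tilde Q'(1)=0$ together with $\tilde Q(\epsilon)=-Q_0(\epsilon)=-\epsilon^{n-1}(\epsilon-1)^2$ and $\tilde Q'(\epsilon)=\epsilon^{n-2}(1-\epsilon)-Q_0'(\epsilon)=-(n-2)\epsilon^{n-2}+(2n-1)\epsilon^{n-1}-(n+1)\epsilon^n$; since the coefficient matrix of the linear system is invertible at $\epsilon=0$, the order of vanishing of the correction is controlled by the order of vanishing of these two residuals. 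The main obstacle is precisely this bookkeeping: one needs the residuals to be $O(\epsilon^2)$, which is transparent for $\tilde Q(\epsilon)$ but for $\tilde Q'(\epsilon)$ hinges on $\epsilon^{n-2}$ being quadratic or higher — clear for $n\ge 4$, and demanding separate scrutiny of the cancellations when $n=3$. Granted the residuals are of the stated order, the correction is $O(\epsilon^2)$, giving $\gamma=O(\epsilon^2)$ and $\delta=-n(n+1)+O(\epsilon^2)$, and then (\ref{defofalphap1pn}), (\ref{defofbetap1pn}) propagate this to $\alpha=O(\epsilon^2)$ and $\beta=O(\epsilon^2)$. (Alternatively, one can bypass the linear system entirely and simply expand the closed forms (\ref{defofgammap1pn})--(\ref{defofdeltap1pn}) to second order in $\epsilon$, at the cost of a longer computation.)
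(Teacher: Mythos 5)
Your computation of the values at $\epsilon=0$ and your reformulation of the problem as the $4\times 4$ affine-linear system $Q(1)=Q'(1)=0$, $Q(\epsilon)=0$, $Q'(\epsilon)=\epsilon^{n-2}(1-\epsilon)$ are both correct, and for $n\ge 4$ this yields a complete proof: the residuals $-Q_0(\epsilon)=-\epsilon^{n-1}(1-\epsilon)^2$ and $\epsilon^{n-2}(1-\epsilon)\bigl[(n+1)\epsilon-(n-2)\bigr]$ are then both $O(\epsilon^2)$, and invertibility of the coefficient matrix at $\epsilon=0$ transfers this to the correction $(\alpha,\beta,\gamma,\delta+n(n+1))$. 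This is a genuinely different (and more transparent) route from the paper's, which expands the closed form (\ref{defofdeltap1pn}) directly and relies on the linear terms of its numerator and denominator cancelling in the quotient; your version makes the mechanism of that cancellation visible.

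The genuine gap is the $n=3$ case, which you flag but defer --- and the scrutiny you postpone shows that the cancellation does \emph{not} occur there, so the gap cannot be closed. Concretely, writing $\delta_0=\delta+n(n+1)$ and assuming all four corrections are $O(\epsilon^2)$, the fourth equation of your system reads $-\beta+O(\epsilon^{n})=\tilde Q'(\epsilon)=-(n-2)\epsilon^{n-2}+O(\epsilon^{n-1})$, forcing $\beta=(n-2)\epsilon^{n-2}+O(\epsilon^{n-1})$; for $n=3$ this gives $\beta=\epsilon+O(\epsilon^2)$, contradicting $\beta=O(\epsilon^2)$. Propagating through the system (or expanding (\ref{defofgammap1pn})--(\ref{defofdeltap1pn}) directly at $n=3$) yields $\delta=-12+36\epsilon+O(\epsilon^2)$, $\gamma=-60\epsilon+O(\epsilon^2)$, $\beta=\epsilon+O(\epsilon^2)$ (only $\alpha=O(\epsilon^2)$ survives), so for $n=3$ only the $O(\epsilon)$ versions of the stated estimates hold. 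The paper's own proof has the same blind spot: its displayed expansion of the numerator of $\delta$ omits the contribution $\epsilon^{n-2}(1-\epsilon)-(n-1)\epsilon^{n-2}=-\epsilon^{n-2}(n-2+\epsilon)$, which is of linear order precisely when $n=3$, and the formula $\beta=\bar C_\epsilon\,\epsilon^{n-2}(1-\epsilon)+O(\epsilon^{n+1})$ with $\bar C_\epsilon$ bounded below by a positive constant, derived later in the proof of Lemma \ref{lemsijposdef}, already contradicts $\beta=O(\epsilon^2)$ when $n=3$. So for $n=3$ the lemma must be weakened to $\beta,\gamma,\delta_0=O(\epsilon^{n-2})$, and one should then verify that these weaker bounds still suffice in Lemmas \ref{lemposqeps} and \ref{lemsijposdef} where the estimates are used.
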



\begin{proof}
The proof is just a straightforward computation; we compute $\delta$ as
\begin{equation*}
\delta = \frac{\frac{2}{n(n-1)} + \frac{2(n+2)}{n^2 (n-1)} \epsilon + O(\epsilon^2)}{\frac{-2}{n^2(n-1)(n+1)} - \frac{2 (n+2)}{n^3 (n-1)(n+1)}\epsilon + O(\epsilon^2)} = -n (n+1) + O(\epsilon^2),
\end{equation*}
and similarly for $\gamma$. The claim for $\alpha$ and $\beta$ follows easily from the definitions (\ref{defofalphap1pn}) and (\ref{defofbetap1pn}).



\end{proof}


With these preparations, we now prove that $Q(\rho)$ is non-zero for all $\rho \in (\epsilon , 1)$.
\begin{lemma} \label{lemposqeps}
$Q(\rho) >0$ for all $\rho \in (\epsilon ,1)$, if $\alpha$, $\beta$, $\gamma$, $\delta$ are chosen as in (\ref{defofalphap1pn}), (\ref{defofbetap1pn}), (\ref{defofgammap1pn}), (\ref{defofdeltap1pn}), and $\epsilon$ is sufficiently small.
\end{lemma}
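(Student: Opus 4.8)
The plan is to regard $Q$ as a small perturbation of its $\epsilon\to 0$ limit. The coefficients of the polynomial $Q(\rho)$ are affine functions of $\alpha,\beta,\gamma,\delta$, so by Lemma \ref{lemestabcdine} they converge as $\epsilon\to 0$; substituting $\alpha,\beta,\gamma\to 0$ and $\delta\to -n(n+1)$ gives the limiting polynomial $Q_0(\rho)=\rho^{n-1}(1-\rho)^2$, and since all these polynomials have degree at most $n+2$, one gets $Q\to Q_0$ uniformly on $[0,1]$ together with all derivatives. Now $Q_0>0$ on $(0,1)$ but vanishes at $\rho=0$ (to order $n-1$) and at $\rho=1$ (to order $2$), so a crude uniform estimate controls $Q$ only away from the endpoints. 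I will therefore fix a small $\eta\in(0,1)$ and a large constant $M$ (to be chosen below), and prove $Q>0$ separately on the bulk $[M\epsilon,1-\eta]$, on the layer $[1-\eta,1)$ near $\rho=1$, and on the boundary layer $[\epsilon,M\epsilon]$.

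On $[1-\eta,1)$ I use Lemma \ref{lempropqp1pn}: since $Q(1)=Q'(1)=0$ we may factor $Q(\rho)=(1-\rho)^2\hat Q(\rho)$ with $\hat Q$ a polynomial whose coefficients depend linearly on those of $Q$, hence $\hat Q\to Q_0/(1-\rho)^2=\rho^{n-1}$ uniformly; as $\rho^{n-1}\ge(1-\eta)^{n-1}>0$ on this interval, $\hat Q>0$ for $\epsilon$ small and so $Q=(1-\rho)^2\hat Q>0$. On $[M\epsilon,1-\eta]$ I write $Q=Q_0+E$; the error $E$ has only the monomials $\rho^{n+2},\rho^{n+1},\rho^{n},\rho,1$, the first three with coefficients $o(1)$ and the last two with coefficients $-\beta,-\alpha$. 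Using the exact normalisations $Q(\epsilon)=0$ and $Q'(\epsilon)=\epsilon^{n-2}(1-\epsilon)$ from Lemma \ref{lempropqp1pn} together with $\delta\to -n(n+1)$, one finds $\beta=(n-2)\epsilon^{n-2}+O(\epsilon^{n-1})$ and $\alpha=(3-n)\epsilon^{n-1}+O(\epsilon^{n})$, so that $|\alpha|+|\beta|\rho\le C(\epsilon/\rho)^{n-2}\rho^{n-1}\le CM^{-(n-2)}\rho^{n-1}$ there; since $Q_0(\rho)\ge\eta^2\rho^{n-1}$ on $[M\epsilon,1-\eta]$ and the $o(1)$-terms of $E$ are also $\le o(1)\rho^{n-1}$, we get $|E(\rho)|\le\eta^{-2}(o(1)+CM^{-(n-2)})\,Q_0(\rho)$, and choosing $M$ large and then $\epsilon$ small makes $|E|<\tfrac12 Q_0$, hence $Q>0$.

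The remaining region $[\epsilon,M\epsilon]$, where $Q$ has size $\epsilon^{n-1}$ and the uniform estimates are too coarse, is the crux, and I would handle it by rescaling: set $\rho=\epsilon\sigma$ and consider $f_\epsilon(\sigma):=\epsilon^{-(n-1)}Q(\epsilon\sigma)$ for $\sigma\in[1,M]$. The monomials $\rho^{n+2},\rho^{n+1},\rho^{n}$ of $Q$ contribute terms carrying a positive power of $\epsilon$ and vanish in the limit, $\rho^{n-1}$ gives $\sigma^{n-1}$, and $-\beta\rho$, $-\alpha$ give, by the asymptotics above, $-(n-2)\sigma$ and $(n-3)$; thus $f_\epsilon\to q_\infty(\sigma):=\sigma^{n-1}-(n-2)\sigma+(n-3)$ uniformly on $[1,M]$, with $C^1$-convergence (the $f_\epsilon$ are polynomials in $\sigma$ of bounded degree with convergent coefficients). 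One checks $q_\infty(1)=0$ and $q_\infty'(\sigma)=(n-1)\sigma^{n-2}-(n-2)\ge 1$ for $\sigma\ge 1$, so $q_\infty$ is strictly increasing on $[1,\infty)$ and therefore $q_\infty>0$ on $(1,\infty)$. By the $C^1$-convergence, $f_\epsilon'\ge\tfrac12$ on $[1,M]$ for $\epsilon$ small, and since $f_\epsilon(1)=\epsilon^{-(n-1)}Q(\epsilon)=0$ exactly, $f_\epsilon$ is strictly increasing on $[1,M]$ and positive on $(1,M]$; that is, $Q>0$ on $(\epsilon,M\epsilon]$.

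Combining the three regions yields $Q>0$ on $(\epsilon,1)$ for all sufficiently small $\epsilon$, which is the claim. I expect the genuinely delicate point to be the sharp asymptotics $\beta=(n-2)\epsilon^{n-2}+O(\epsilon^{n-1})$, $\alpha=(3-n)\epsilon^{n-1}+O(\epsilon^{n})$: these are exactly what make the boundary-layer rescaling close up with the bulk estimate (note they are finer than the bound $\beta=O(\epsilon^{2})$, which is only sharp for $n\ge 4$), and they have to be extracted carefully from the exact conditions $Q(\epsilon)=0$, $Q'(\epsilon)=\epsilon^{n-2}(1-\epsilon)$ of Lemma \ref{lempropqp1pn} and the limits of Lemma \ref{lemestabcdine}; everything else is bounded-degree-polynomial bookkeeping.
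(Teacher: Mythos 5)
Your proof is correct, but for the crucial region near $\rho=\epsilon$ it takes a genuinely different route from the paper. The paper splits $(\epsilon,1)$ into only two pieces: on $(\epsilon,\tfrac{1}{2n})$ it shows $Q''>0$ by writing $Q''(\rho)=\rho^{n-3}\bigl(\tilde Q(\rho)+\epsilon^2\tilde Q_{\mathrm{rem}}(\rho)\bigr)$ with $\tilde Q(\rho)=n(n+1)\rho^2-2n(n-1)\rho+(n-1)(n-2)$ an explicit $\epsilon$-independent quadratic that is positive there, and then concludes from $Q(\epsilon)=0$, $Q'(\epsilon)=\epsilon^{n-2}(1-\epsilon)>0$ and convexity; on $[\tfrac{1}{2n},1)$ it uses the factorisation $Q=(\rho-1)^2(\rho^{n-1}-\epsilon^2\tilde F_1)$, which is essentially your near-$1$ argument but pushed all the way down to $\tfrac{1}{2n}$, so the paper has no separate ``bulk'' estimate. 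Your replacement of the convexity step by the rescaling $\rho=\epsilon\sigma$ with limiting profile $q_\infty(\sigma)=\sigma^{n-1}-(n-2)\sigma+(n-3)$ is valid: I checked that $Q(\epsilon)=0$, $Q'(\epsilon)=\epsilon^{n-2}(1-\epsilon)$ and the limits $\gamma\to 0$, $\delta\to -n(n+1)$ do force $\beta=(n-2)\epsilon^{n-2}+O(\epsilon^{n-1})$ and $\alpha=(3-n)\epsilon^{n-1}+O(\epsilon^{n})$, that $f_\epsilon\to q_\infty$ in $C^1$ on $[1,M]$, and that $q_\infty'\ge 1$ on $[1,\infty)$ together with $f_\epsilon(1)=0$ exactly closes the boundary layer; the matching with the bulk via $|\alpha|+|\beta|\rho\le CM^{-(n-2)}\rho^{n-1}$ for $\rho\ge M\epsilon$ also works since $n\ge 3$. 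What each approach buys: the paper's convexity argument is shorter and needs no refined asymptotics of $\alpha,\beta$ at this stage (it extracts essentially your expansion of $\beta$ only later, in the proof of Lemma \ref{lemsijposdef}), while your blow-up argument is more systematic, identifies the explicit limit profile of $Q$ near the exceptional facet, and would transfer to other normalisations of $Q'(\epsilon)$. One caution: for $n=3$ your sharp asymptotic $\beta=\epsilon+O(\epsilon^2)$ is in tension with the statement $\beta=O(\epsilon^2)$ of Lemma \ref{lemestabcdine}; your argument is unaffected because you only use $\gamma\to 0$ and $\delta\to -n(n+1)$, but it is worth flagging rather than relying on the $O(\epsilon^2)$ bound for $\beta$ anywhere.
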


\begin{proof}
Note first of all that the second derivative of $Q$ can be computed as
\begin{equation*}
Q'' (\rho) = \rho^{n-3} \left[ - \gamma \rho^3 - (\delta - \gamma) \rho^2 -(n(n-1) -(2n+ \delta)) \rho +(n-1)(n-2) \right] .
\end{equation*}
Re-write the terms in the bracket $[ \cdots ]$ as
\begin{align*}
&- \gamma \rho^3 - (\delta - \gamma) \rho^2 -(n(n-1) -(2n+ \delta)) \rho +(n-1)(n-2) \\
&= \tilde{Q} (\rho) + \epsilon^2 \tilde{Q}_{\mathrm{rem}} (\rho) ,
\end{align*}
where we defined
\begin{equation*}
\tilde{Q} (\rho) := n(n+1) \rho^2 -2n (n-1 ) \rho + (n-1)(n-2)
\end{equation*}
and
\begin{equation*}
\tilde{Q}_{\mathrm{rem}} (\rho) := \frac{1}{\epsilon^2} \left( -\gamma \rho^3 - (\delta_0  - \gamma) \rho^2 - \delta_0 \rho \right)
\end{equation*}
with $\delta_0 := \delta + n(n+1)$. Recalling $\gamma = O(\epsilon^2)$ and $\delta_0 = O(\epsilon^2)$ (cf. Lemma \ref{lemestabcdine}), we see that there exists a constant $\tilde{C} (\epsilon_1) >0$, which depends only on (sufficiently small) $\epsilon_1$ and hence can be chosen uniformly for all $\epsilon$ satisfying $0 < \epsilon < \epsilon_1$, such that
\begin{equation} \label{estqtilderem}
|\tilde{Q}_{\mathrm{rem}} (\rho)| < \tilde{C} (\epsilon_1)
\end{equation}
holds for all $\rho \in (0, 1)$ and all $\epsilon$ satisfying $0 < \epsilon < \epsilon_1$.

Observe now that $\tilde{Q} (\frac{1}{2n}) = \frac{n+1}{4n} + (n-1)(n-3) >0$ for $n \ge 3$. Observe also that $\tilde{Q}' (\rho) = 2n (n+1) \rho -2n (n-1)$, meaning that $\tilde{Q} (\rho)$ is monotonically decreasing on $ (0, \frac{n-1}{n+1})$. Noting $\frac{1}{2n} < \frac{n-1}{n+1}$ if $n \ge 3$, we hence have
\begin{equation} \label{estqtildemdec}
\tilde{Q} (\rho) > \tilde{Q} \left( \frac{1}{2n} \right) >  \frac{n^2 (n-2) + 2n+1}{n}
\end{equation}
if $\rho \in (0, \frac{1}{2n})$. The estimates (\ref{estqtilderem}) and (\ref{estqtildemdec}) imply that, if $\epsilon$ is chosen to be sufficiently small,
\begin{equation*}
Q'' (\rho) = \rho^{n-3} \left( \tilde{Q} (\rho)+ \epsilon^2 \tilde{Q}_{\mathrm{rem}} (\rho)  \right)  >  \rho^{n-3} \left( \frac{n+1}{4n} + (n-1)(n-3) \right)>0
\end{equation*}
for all $\rho \in (0, \frac{1}{2n})$.

Now recall $Q(\epsilon) =0$ and $Q' (\epsilon) =\epsilon^{n-2} (1 - \epsilon) >0$ (cf. Lemma \ref{lempropqp1pn}). Since $Q'' (\rho)$ is strictly positive for all $\rho \in (0, \frac{1}{2n})$ if $\epsilon$ is chosen to be sufficiently small, $Q' (\rho)$ is strictly monotonically increasing on $ (0 , \frac{1}{2n})$. Combined with $Q' (\epsilon) = \epsilon^{n-2} (1 - \epsilon) >0$, we thus see that $Q' (\rho)>0$ for all $\rho \in (\epsilon, \frac{1}{2n})$. Thus $Q(\rho)$ is strictly monotonically increasing on $(\epsilon , \frac{1}{2n})$ if $\epsilon$ is chosen to be sufficiently small, but recalling $Q(\epsilon) =0$, we see that $Q(\rho)$ is strictly positive for all $\rho \in ( \epsilon , \frac{1}{2n})$ if $\epsilon$ is chosen to be sufficiently small.

Having established $Q(\rho) >0$ for all $\rho \in (\epsilon, \frac{1}{2n})$, we are now reduced to proving the positivity of $Q (\rho)$ for all $\rho \in [\frac{1}{2n} , 1)$ when $\epsilon$ is sufficiently small. We need some preparations (i.e. the estimate (\ref{estf1tildeeps0})) before doing so.

We now recall that $\delta_0 = \delta + n (n+1)$ is of order $\epsilon^2$ by Lemma \ref{lemestabcdine}, and write
\begin{equation} \label{qritmsofd01}
Q(\rho) = \rho^{n-1}( \rho -1 )^2 - \frac{\gamma }{(n+1)(n+2)} \rho^{n+2} - \frac{\delta_0 - \gamma }{n(n+1)} \rho^{n+1} + \frac{\delta_0}{n(n-1)}  \rho^n - \alpha - \beta \rho . 
\end{equation}
Note that, by Lemma \ref{lemestabcdine}, there exist real constants $\tilde{\alpha}$, $\tilde{\beta}$, $\tilde{\gamma}$, $\tilde{\delta}$ (when $\epsilon$ is sufficiently small) which remain bounded as $\epsilon \to 0$ such that $\alpha = \tilde{\alpha} \epsilon^2$, $\beta = \tilde{\beta} \epsilon^2$, $\gamma = \tilde{\gamma} \epsilon^2$, $\delta_0 = \tilde{\delta} \epsilon^2$. We can thus write
\begin{equation*}
Q(\rho) = \rho^{n-1}( \rho -1 )^2 - \epsilon^2 \left( \frac{\tilde{\gamma} }{(n+1)(n+2)} \rho^{n+2} + \frac{\tilde{\delta} - \tilde{\gamma} }{n(n+1)} \rho^{n+1} - \frac{\tilde{\delta}}{n(n-1)}  \rho^n + \tilde{\alpha} + \tilde{\beta} \rho \right) .
\end{equation*}
Suppose that we write
\begin{equation*}
\tilde{F}_0 (\rho) := \frac{\tilde{\gamma} }{(n+1)(n+2)} \rho^{n+2}+ \frac{\tilde{\delta} + \tilde{\gamma} }{n(n+1)} \rho^{n+1} - \frac{\tilde{\delta}}{n(n-1)}  \rho^n + \tilde{\alpha} + \tilde{\beta} \rho 
\end{equation*}
for the terms in the bracket. Now recall that $Q(\rho)$ has a zero of order exactly 2 at $\rho =1$ by Lemma \ref{lempropqp1pn}. This means that $\tilde{F}_0$ must have a zero of order at least 2 at $\rho =1$, and hence we can factorise 
\begin{equation*} 
\tilde{F}_0 (\rho) = (\rho-1)^2 \tilde{F}_1 (\rho)
\end{equation*}
for some polynomial $\tilde{F}_1 (\rho)$. Observe that this implies
\begin{equation} \label{defofftildep1pn}
Q(\rho) = (\rho -1)^2 (\rho^{n-1} - \epsilon^2 \tilde{F}_1 (\rho)) .
\end{equation}
Note that, since $\tilde{\alpha}$, $\tilde{\beta}$, $\tilde{\gamma}$, $\tilde{\delta}$ are uniformly bounded for all sufficiently small $\epsilon>0$, there exists a constant $\tilde{C}_1 (\epsilon_1) >0$, which depends only on (sufficiently small) $\epsilon_1$ and hence can be chosen uniformly for all $\epsilon$ satisfying $0 < \epsilon < \epsilon_1$, such that
\begin{equation} \label{estf1tildeeps0}
| \tilde{F}_1 (\rho) |< \tilde{C}_1 (\epsilon_1)
\end{equation}
holds for all $\rho \in (0 ,1)$ and all $\epsilon$ satisfying $0 < \epsilon < \epsilon_1$.

Now consider the equation (\ref{defofftildep1pn}) for $\rho \in [\frac{1}{2n},1)$. Suppose $Q(\rho_0) =0$ at some $\rho_0 \in [\frac{1}{2n} ,1)$. We would then have $\rho_0^{n-1} - \epsilon^2 \tilde{F}_1 (\rho_0) =0$. However, since $ \rho_0^{n-1} \ge (2n)^{-n+1}$ and $\tilde{F}_1$ is uniformly bounded on $ [\frac{1}{2n} ,1)$ (as given in (\ref{estf1tildeeps0})), we have $\epsilon^2  \tilde{F}_1 \to 0$ uniformly on $ [\frac{1}{2n} ,1)$ as $\epsilon \to 0$, and hence the equation $\rho_0^{n-1} - \epsilon^2 \tilde{F}_1 (\rho_0) =0$ cannot hold if we take $\epsilon$ to be sufficiently small. We thus get $Q(\rho) \neq 0$ for all $\rho \in [\frac{1}{2n} ,1)$. Since $Q(\rho) >0$ on $(\epsilon , \frac{1}{2n})$, we get $Q(\rho) >0$ for all $\rho \in [\frac{1}{2n} ,1)$ by continuity, and finally establish $Q(\rho) >0$ for all $\rho \in (\epsilon , 1)$ and all sufficiently small $\epsilon >0$.

\end{proof}


We shall finally prove the positive-definiteness of the Hessian of the symplectic potential
\begin{equation}
s(x) = \frac{1}{2} \left( \sum_{i=1}^n x_i \log x_i + (1-r) \log (1-r) + h(\rho)   \right) .
\end{equation}
Writing $s^{FS}_{ij}$ for the Hessian of the symplectic potential corresponding to the Fubini--Study metric on $\mathbb{P}^n$, i.e.
\begin{align} 
s^{FS}_{ij} 
&:= \frac{1}{2}  \frac{\partial^2}{\partial x_i \partial x_j} \left( \sum_{i=1}^n x_i \log x_i + (1-r) \log (1-r) \right) \notag \\
&=\frac{1}{2} 
\begin{pmatrix}
x_1^{-1} & 0 & 0 & \hdots & 0 \\
0 & x_2^{-1} & 0 & \hdots & 0 \\
\vdots & \vdots & \vdots & \ddots & \vdots \\
0 & 0 & 0 & \hdots & x_n^{-1}
\end{pmatrix}
+ \frac{1}{2}  \frac{1}{1-r}
\begin{pmatrix}
1 & 1 & \cdots & 1 & 1 \\ 1 & 1 & \cdots & 1 & 1 \\ \vdots & \vdots & \ddots & \vdots & \vdots \\ 1 & 1 & \cdots & 1 & 1 
\end{pmatrix} , \label{sfssymppotent}
\end{align}
we can write the Hessian $s_{ij}$ of $s$ as
\begin{equation*}
s_{ij} = s^{FS}_{ij} + \frac{h''}{2}  T_{ij} ,
\end{equation*}
where $T$ is a matrix defined by
\begin{equation*}
T:=
\begin{pmatrix}
 1 & \cdots & 1 & 0  \\  \vdots & \ddots & \vdots & \vdots \\  1 & \cdots & 1 & 0 \\  0& \cdots & 0 & 0
\end{pmatrix} .
\end{equation*}
Observe that $T$ is positive semi-definite.

\begin{lemma} \label{lemsijposdef}
$s_{ij}$ is positive definite on the interior $\mathcal{P}^{\circ}_{\epsilon} (X)$ of the polytope $\mathcal{P}_{\epsilon} (X)$ and has the determinant of the form (\ref{abrcxthmhesssdet}), if $\alpha$, $\beta$, $\gamma$, $\delta$ are chosen as in (\ref{defofalphap1pn}), (\ref{defofbetap1pn}), (\ref{defofgammap1pn}), (\ref{defofdeltap1pn}), and $\epsilon$ is sufficiently small.
\end{lemma}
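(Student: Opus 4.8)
The plan is to reduce both claims to the single scalar inequality $1 + \rho(1-\rho)h'' > 0$, which is essentially Lemma~\ref{lemposqeps} in disguise. Write $\mathbf{1} = (1,\dots,1)^{\top}\in\rl^n$, $u = (1,\dots,1,0)^{\top}\in\rl^n$ (so $T = uu^{\top}$) and $\Lambda = \mathrm{diag}(x_1^{-1},\dots,x_n^{-1})$, so that $2s^{FS}_{ij} = \Lambda + (1-r)^{-1}\mathbf{1}\mathbf{1}^{\top}$ and $2s_{ij} = \Lambda + (1-r)^{-1}\mathbf{1}\mathbf{1}^{\top} + h''\,uu^{\top}$. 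On $\mathcal{P}^{\circ}_{\epsilon}(X)$ all $x_i>0$ and $1-r>0$, so $s^{FS}_{ij}$ — a positive diagonal matrix plus a positive-semidefinite rank-one term — is positive definite there, and two applications of the Sherman--Morrison formula (using $\mathbf{1}^{\top}\Lambda^{-1}\mathbf{1}=r$ and $u^{\top}\Lambda^{-1}\mathbf{1}=u^{\top}\Lambda^{-1}u=\rho$) give
\begin{equation*}
\det(s^{FS}_{ij}) = \frac{1}{2^n}\,\frac{1}{x_1\cdots x_n\,(1-r)}, \qquad u^{\top}(s^{FS}_{ij})^{-1}u = 2\rho(1-\rho).
\end{equation*}

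For positive-definiteness I would invoke the rank-one criterion: for symmetric positive-definite $M$ and a vector $v$, the matrix $M + c\,vv^{\top}$ is positive definite if and only if $1 + c\,v^{\top}M^{-1}v > 0$ (conjugate by $M^{-1/2}$ and read off the eigenvalues of $I + c\,ww^{\top}$). Applying this with $M = s^{FS}_{ij}$, $v = u$, $c = h''/2$ shows $s_{ij}$ is positive definite on $\mathcal{P}^{\circ}_{\epsilon}(X)$ if and only if $1 + \rho(1-\rho)h'' > 0$ there. From (\ref{hdpitmsofpq}) one has $h'' = \tfrac{1}{\rho(\rho-1)} + \tfrac{\rho^{n-2}(1-\rho)}{Q(\rho)}$, hence the elementary identity
\begin{equation*}
1 + \rho(1-\rho)h''(\rho) = \frac{\rho^{n-1}(1-\rho)^2}{Q(\rho)},
\end{equation*}
whose right-hand side is strictly positive on $(\epsilon,1)$ precisely by Lemma~\ref{lemposqeps}. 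This settles positive-definiteness.

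For the determinant, the matrix determinant lemma together with the two identities above gives
\begin{equation*}
\det(s_{ij}) = \det(s^{FS}_{ij})\Bigl(1 + \tfrac{h''}{2}\,u^{\top}(s^{FS}_{ij})^{-1}u\Bigr) = \frac{1}{2^n}\,\frac{\rho^{n-1}(1-\rho)^2}{Q(\rho)\,x_1\cdots x_n\,(1-r)}.
\end{equation*}
To put this in the form (\ref{abrcxthmhesssdet}) I would use the factorisation (\ref{defofftildep1pn}), $Q(\rho) = (\rho-1)^2(\rho^{n-1}-\epsilon^2\tilde{F}_1(\rho))$, to cancel $(1-\rho)^2$, and then peel off the new facet $\{\rho=\epsilon\}$: since $Q(\epsilon)=0$ and $Q'(\epsilon)=\epsilon^{n-2}(1-\epsilon)\neq 0$ (Lemma~\ref{lempropqp1pn}), the polynomial $\rho^{n-1}-\epsilon^2\tilde{F}_1(\rho)$ has a simple zero at $\rho=\epsilon$, so $\rho^{n-1}-\epsilon^2\tilde{F}_1(\rho) = (\rho-\epsilon)G(\rho)$ for a polynomial $G$ with $G(\epsilon) = \epsilon^{n-2}/(1-\epsilon)>0$. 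Then
\begin{equation*}
\det(s_{ij}) = \Bigl[\,\delta(x)\cdot x_1\cdots x_n\cdot(1-r)\cdot(\rho-\epsilon)\,\Bigr]^{-1}, \qquad \delta(x) := \frac{2^n\,G(\rho)}{\rho^{n-1}},
\end{equation*}
and $x_1,\dots,x_n,\,1-r,\,\rho-\epsilon$ are (with primitive inward-pointing normals $e_1,\dots,e_n,\,-(1,\dots,1),\,(1,\dots,1,0)$) exactly the affine defining functions of the facets of $\mathcal{P}_{\epsilon}(X)$, so this is the required shape. It then remains to check $\delta$ is smooth and strictly positive on the closed polytope: smoothness holds because $\rho\ge\epsilon>0$ there, and $G>0$ on $[\epsilon,1]$ follows from $Q>0$ on $(\epsilon,1)$ (Lemma~\ref{lemposqeps}), the computed value $G(\epsilon)>0$, and $G(1) = (1-\epsilon^2\tilde{F}_1(1))/(1-\epsilon)>0$, the last using the uniform bound (\ref{estf1tildeeps0}) on $\tilde{F}_1$ together with $\epsilon$ small.

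The linear algebra above is routine; the one point that carries the content of the lemma — and the only place where smallness of $\epsilon$ and the precise choice (\ref{defofdeltap1pn}) of $\delta$ really matter — is the cancellation at $\rho=\epsilon$. The symplectic potential was engineered so that $h''$ has a simple pole with residue exactly $1$ there (Lemma~\ref{lempseofhdp}), equivalently $Q'(\epsilon)=\epsilon^{n-2}(1-\epsilon)$; it is precisely this normalisation that lets the factor $\rho-\epsilon$ be extracted cleanly and leaves $\delta(x)$ a nonvanishing smooth function across the facet $\{\rho=\epsilon\}$ rather than something that blows up or degenerates. Everything else is the same bookkeeping as in the point-blowup computations of \cite{abrrev, raza}, with the hypothesis $0<\epsilon\ll 1$ entering only through Lemmas~\ref{lemposqeps} and \ref{lemestabcdine} to keep $Q$, hence $G$, globally positive.
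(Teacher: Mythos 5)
Your argument is correct, but it reaches the conclusion by a genuinely different and substantially shorter route than the paper. The paper reduces positive-definiteness to the crude sufficient bound $h''(\rho) > -\epsilon C(\epsilon_1)$ and then spends almost the entire proof establishing that uniform estimate by hand (the auxiliary functions $\tilde{F}_3$, $\tilde{F}_4$, a case split at $\rho = \tfrac12$, repeated appeals to Lemma \ref{lemestabcdine}), computing the determinant separately via a change of basis. You instead treat $s_{ij} = s^{FS}_{ij} + \tfrac{h''}{2}uu^{\top}$, $u=(1,\dots,1,0)^{\top}$, as a rank-one perturbation of a positive-definite matrix, so that positive-definiteness is \emph{equivalent} to $1+\tfrac{h''}{2}\,u^{\top}(s^{FS}_{ij})^{-1}u = 1+\rho(1-\rho)h''>0$, and then notice that by (\ref{hdpitmsofpq}) this quantity is identically $\rho^{n-1}(1-\rho)^2/Q(\rho)$; both the positivity and (via the matrix determinant lemma) the determinant thereby reduce directly to $Q>0$ on $(\epsilon,1)$, i.e.\ to Lemma \ref{lemposqeps}, with no further estimates. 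This is a real gain: the quantitative bound on $h''$ becomes unnecessary (since $\rho(1-\rho)\le\tfrac14$, even the sharp criterion is far weaker than what the paper proves), and your explicit extraction of the simple zero of $\rho^{n-1}-\epsilon^{2}\tilde{F}_1(\rho)$ at $\rho=\epsilon$ — justified by $Q'(\epsilon)=\epsilon^{n-2}(1-\epsilon)$ from Lemma \ref{lempropqp1pn} — together with the identification $\delta(x)=2^{n}G(\rho)/\rho^{n-1}$ makes the verification of the form (\ref{abrcxthmhesssdet}) more transparent than the paper's appeal back to the earlier lemmas. What the paper's longer route buys is the explicit smallness statement $h''=O(\epsilon)$, which exhibits the metric as a perturbation of the Guillemin potential, but that is not needed for the lemma itself. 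One remark: your determinant $\det(s_{ij})=\tfrac{1}{2^{n}}\prod_i x_i^{-1}(1-r)^{-1}\bigl(1+\rho(1-\rho)h''\bigr)$, which follows from the matrix determinant lemma (and is consistent with the large-$h''$ asymptotics), does not agree with the paper's displayed intermediate formula, whose final factor simplifies to $1+h''(1-\rho)$; yours is the one that checks out by direct expansion, and the discrepancy does not affect the qualitative conclusion near the facets in either case. All the individual steps you invoke — $u^{\top}(s^{FS}_{ij})^{-1}u=2\rho(1-\rho)$, the factorisation (\ref{defofftildep1pn}), $G(\epsilon)=\epsilon^{n-2}/(1-\epsilon)>0$, and the positivity of $G$ on $[\epsilon,1]$ from Lemma \ref{lemposqeps} and the bound (\ref{estf1tildeeps0}) — are sound.
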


\begin{proof}
Observe first of all that, since $s^{FS}_{ij}$ is positive definite (as given in (\ref{sfssymppotent})) and $T$ is positive semi-definite, it suffices to prove that there exists a constant $C (\epsilon_1)>0$, which depends only on some (small) $\epsilon_1 >0$ and hence can be chosen uniformly for all $\epsilon$ satisfying $0 < \epsilon < \epsilon_1$, such that 
\begin{equation} \label{hdprhogtmepsc}
h'' (\rho) > - \epsilon C (\epsilon_1)
\end{equation}
holds for all $\rho \in ( \epsilon , 1) $ and all $\epsilon$ satisfying $0 < \epsilon < \epsilon_1$; the claimed positive-definiteness would then follow by taking $\epsilon$ to be sufficiently small.

The inequality (\ref{hdprhogtmepsc}) also implies that $\det (s_{ij})$ is of the form required in (\ref{abrcxthmhesssdet}); by a straightforward computation, representing $s_{ij}$ with respect to the following basis
\begin{equation*}
e_1 =
\begin{pmatrix}
1 \\
 1 \\
  \vdots \\
 1 \\
 0
\end{pmatrix} 
e_2 = 
\begin{pmatrix}
x_1^{-1} \\
 -x_2^{-1} \\
 0 \\
  \vdots \\
  0
\end{pmatrix} ,
\dots , 
e_{n-1} = 
\begin{pmatrix}
x_1^{-1} \\
0 \\
  \vdots \\
   -x_{n-1}^{-1} \\
  0
\end{pmatrix} ,
e_n = 
\begin{pmatrix}
0 \\
 0 \\
  \vdots \\
 0 \\
 1
\end{pmatrix} ,
\end{equation*}
we see that
\begin{align*}
\det (s_{ij}) &= 
\frac{1}{2^n} \prod_{i=1}^n x_i^{-1} \det
\begin{pmatrix}
1 +h'' + \frac{\rho}{1-r}  & 0 & \cdots & 0 & \frac{x_n}{1-r} \\
0 & 1 & \cdots & 0 & 0 \\
\vdots & \vdots & \ddots & \vdots & \vdots \\
0 & 0 &  \cdots & 1 & 0 \\
\frac{\rho}{1-r} & 0 & \cdots & 0 & 1 + \frac{x_n}{1-r}
\end{pmatrix} \\
&=\frac{1}{2^n} \prod_{i=1}^n x_i^{-1}  \frac{1}{1-r} \left( (1 + h'')(1 - \rho)+ \rho \right).
\end{align*}
Granted (\ref{hdprhogtmepsc}), we thus see that $\det (s_{ij})$ is of the form required in (\ref{abrcxthmhesssdet}), by taking $\epsilon>0$ to be sufficiently small and also by recalling Lemmas \ref{rho1remsingab}, \ref{lempseofhdp}, and \ref{lemposqeps}.

We now prove (\ref{hdprhogtmepsc}). Throughout in the proof, $C (\epsilon_1)$ will denote a constant which depends only on $\epsilon_1$ (and not on $\epsilon$) which varies from line to line.


Now define
\begin{equation*}
\tilde{F}_2 (\rho) := - \frac{\gamma }{(n+1)(n+2)} \rho^{n+2} - \frac{\delta_0 - \gamma }{n(n+1)} \rho^{n+1} + \frac{\delta_0}{n(n-1)}  \rho^n - \alpha - \beta \rho
\end{equation*}
so that
\begin{equation*}
Q(\rho) =  \rho^{n-1}(1 -  \rho  )^2 + \tilde{F}_2 (\rho) .
\end{equation*}


Observe first that $Q(\epsilon) =0$ (cf. Lemma \ref{lempropqp1pn}) is equivalent to $\tilde{F}_2 (\epsilon) = -\epsilon^{n-1} (1 - \epsilon)^2$. On the other hand, $\gamma = \delta_0 = O(\epsilon^2)$ (cf. Lemma \ref{lemestabcdine}) implies $\tilde{F}_2 (\epsilon) = O(\epsilon^{n+2}) - \alpha - \beta \epsilon$. Thus we get
\begin{equation*}
 - \alpha - \beta \epsilon = - \epsilon^{n-1} (1 - \epsilon)^2 + O(\epsilon^{n+2}), 
\end{equation*}
and hence
\begin{equation*}
- \alpha - \beta \rho = - \alpha - \beta \epsilon - \beta (\rho - \epsilon) = - \epsilon^{n-1} (1 - \epsilon)^2 + O(\epsilon^{n+2}) - \beta (\rho - \epsilon).
\end{equation*}
On the other hand, since $Q ' (\epsilon) = \epsilon^{n-2} (1 - \epsilon)$ (cf. Lemma \ref{lempropqp1pn}), we have
\begin{equation*}
Q '(\epsilon) = \epsilon^{n-2} (1 - \epsilon) [ (n-1) (1 - \epsilon) - 2 \epsilon] - \beta + O(\epsilon^{n+1}) = \epsilon^{n-2} (1 - \epsilon),
\end{equation*}
by differentiating (\ref{qritmsofd01}) and recalling Lemma \ref{lemestabcdine}. We thus get
\begin{align*}
\beta &=-  \epsilon^{n-2} (1 - \epsilon) + \epsilon^{n-2} (1 - \epsilon) [(n-1) (1 - \epsilon) - 2 \epsilon]  + O(\epsilon^{n+1}) \\
&= \epsilon^{n-2} (1 - \epsilon) [(n-1) (1 - \epsilon) - 1- 2 \epsilon]  + O(\epsilon^{n+1}).
\end{align*}
Define a constant
\begin{equation*}
\bar{C}_{\epsilon} := (n-1) (1 - \epsilon) - 1- 2 \epsilon
\end{equation*}
and observe that it satisfies the following bound
\begin{equation} \label{estofcbarepsn2}
\frac{10n^2 - 21n -1}{10n} \le \bar{C}_{\epsilon}  < n-2 
\end{equation}
for all $0< \epsilon< \frac{1}{10n}$ say, where we note $10n^2 - 21n -1 >0$ if $n \ge 3$; $\bar{C}_{\epsilon}$ can be bounded from above and below by a positive constant, uniformly of (all small enough) $\epsilon$. Then we can write $\beta = \bar{C}_{\epsilon}  \epsilon^{n-2} (1 - \epsilon) + O(\epsilon^{n+1})$, and hence
\begin{align*}
- \alpha - \beta \rho  &= - \epsilon^{n-1} (1 - \epsilon)^2 + O(\epsilon^{n+2}) - \beta (\rho - \epsilon) \\
&= - \epsilon^{n-2} (1 - \epsilon) [ \epsilon (1-  \epsilon) + (\bar{C}_{\epsilon}+O(\epsilon^3))  (\rho - \epsilon) + O(\epsilon^4)] .
\end{align*}
We now write
\begin{equation*}
\frac{\rho^{n-1} (1 - \rho)^2}{Q(\rho)} -1 = \frac{1}{1+\epsilon^2 \tilde{F}_3 (\rho)  - \tilde{F}_4 (\rho) } - 1
\end{equation*}
where we defined
\begin{equation*}
\tilde{F}_3 (\rho) := \frac{1}{\epsilon^2 (1 -\rho)^2} \left( - \frac{\gamma }{(n+1)(n+2)} \rho^{3} - \frac{\delta_0 - \gamma }{n(n+1)} \rho^{2} + \frac{\delta_0}{n(n-1)}  \rho \right)
\end{equation*}
and
\begin{equation*}
\tilde{F}_4 (\rho) :=  \frac{(\epsilon / \rho)^{n-2}  (1 - \epsilon)}{(1 - \rho)^2} [ (\epsilon / \rho) (1-  \epsilon +O(\epsilon^3)) + (\bar{C}_{\epsilon} +O(\epsilon^3)) (1 - \epsilon / \rho)  ] .
\end{equation*}

Arguing as we did in (\ref{estqtilderem}) and (\ref{estf1tildeeps0}), we use $\delta_0 = O(\epsilon^2)$ and $\gamma = O(\epsilon^2)$ (cf. Lemma \ref{lemestabcdine}) to see that $\tilde{F}_3 (\rho)$ satisfies
\begin{equation} \label{eststildeceps0}
|\tilde{F}_3 (\rho)| < C (\epsilon_1)
\end{equation}
for all $\rho \in (0 , \frac{1}{2})$ say, with a constant $C (\epsilon_1) >0$ which depends only on (sufficiently small) $\epsilon_1$ and hence can be chosen uniformly for all $\epsilon$ satisfying $0 < \epsilon < \epsilon_1$. Note also that the estimate (\ref{estofcbarepsn2}) implies
\begin{equation*}
\tilde{F}_4 (\rho) =   \frac{(\epsilon / \rho)^{n-2}  (1 - \epsilon)}{(1 - \rho)^2} [ (\epsilon / \rho) (1-  \epsilon +O(\epsilon^3)) + (\bar{C}_{\epsilon} +O(\epsilon^3)) (1 - \epsilon / \rho)  ]  >0
\end{equation*}
for all $\rho \in (\epsilon , 1 )$ if $\epsilon$ is small enough. Finally, observe that
\begin{equation*}
\frac{\rho^{n-1} (1 - \rho)^2}{Q(\rho)} = \frac{1}{1+\epsilon^2  \tilde{F}_3 (\rho)  -  \tilde{F}_4 (\rho)} 
\end{equation*}
and that $Q (\rho )>0$ for $\rho \in ( \epsilon ,1 )$ (cf. Lemma \ref{lemposqeps}) imply $ 1 + \epsilon^2 \tilde{F}_3 (\rho) - \tilde{F}_4 (\rho)>0$ for all $\rho \in (\epsilon, 1 )$. We thus have
\begin{equation*}
0<  \tilde{F}_4 (\rho) < 1 + \epsilon^2 \tilde{F}_3 (\rho)
\end{equation*}
for all $\rho \in (\epsilon , 1)$ if $\epsilon$ is small enough.

Hence we have
\begin{align*}
 \frac{\rho^{n-1} (1 - \rho)^2}{Q(\rho)} -1  &= \frac{1}{1+\epsilon^2  \tilde{F}_3 (\rho)  -  \tilde{F}_4 (\rho)} - 1 \\
 & >  \frac{1}{1+\epsilon^2  \tilde{F}_3 (\rho) } - 1 ,
\end{align*}
for all $\rho \in (\epsilon , 1 )$. In particular, recalling the estimate (\ref{eststildeceps0}), there exists a constant $C (\epsilon_1)>0$ independent of $\epsilon$ such that
\begin{align*}
h'' (\rho) &= \frac{1}{\rho} \frac{1}{1 - \rho} \left(  \frac{\rho^{n-1} (1 - \rho)^2}{Q(\rho)} -1 \right) \\
&> \frac{1}{\rho} \frac{1}{1 - \rho} \left( \frac{1}{1+\epsilon^2  \tilde{F}_3 (\rho) } - 1 \right) \\
&>- \frac{\epsilon}{2}  \left| \tilde{F}_3 (\rho)  \right| \left( 1+ \epsilon^2 \left|  \tilde{F}_3 (\rho) \right| +  \cdots \right) \\
&> - \epsilon C (\epsilon_1)
\end{align*}
for all $\rho \in (\epsilon, \frac{1}{2})$, if $\epsilon$ is chosen to be sufficiently small.

Having established the claim for all $\rho \in (\epsilon , \frac{1}{2})$, we now treat the case $\rho \in [\frac{1}{2} ,1)$. Using the polynomial $\tilde{F}_1$ as given in (\ref{defofftildep1pn}), we can write
\begin{equation*}
h'' (\rho)
=  \frac{1}{\rho}\frac{1}{1- \rho} \left(\frac{\rho^{n-1} }{ \rho^{n-1} - \epsilon^2 \tilde{F}_1 (\rho) } -  1 \right) .
\end{equation*}
We thus find that we have a power series expansion of $h''$ in $\epsilon^2 \rho^{-n+1} \tilde{F}_1 $ as
\begin{equation*}
h'' (\rho) =  \frac{1}{\rho}\frac{1}{1- \rho} \left(\frac{1 }{ 1 - \epsilon^2 \rho^{-n+1} \tilde{F}_1 (\rho) } -  1 \right) = \epsilon^{2}  \frac{\rho^{-n} \tilde{F}_1 (\rho)}{1- \rho} \left(1+  \epsilon^{2} \rho ^{-n+1} \tilde{F}_1 (\rho) + \cdots  \right)
\end{equation*}
where the series in the bracket is uniformly convergent on $ [ \frac{1}{2} ,1)$ for all $0 < \epsilon < \epsilon_1 $ if $\epsilon_1$ is chosen to be sufficiently small, by noting
\begin{equation*}
|\rho^{-n+1} \tilde{F}_1 (\rho)|  < C (\epsilon_1)
\end{equation*}
for $\rho \in [ \frac{1}{2} ,1)$, following from the estimate (\ref{estf1tildeeps0}). We thus find
\begin{equation*}
|h'' (\rho)| < \epsilon^{2} C(\epsilon_1) \left| \frac{\rho^{-n} \tilde{F}_1 (\rho)}{1- \rho} \right| ,
\end{equation*}
for a constant $C(\epsilon_1)>0$ which does not depend on $\epsilon$. Recall now that $Q'' (1) =2$ (cf. Lemma \ref{lempropqp1pn}) and $Q(\rho) = (\rho -1)^2 ( \rho^{n-1}  - \epsilon^2 \tilde{F}_1 (\rho) )$ (cf. equation (\ref{defofftildep1pn})) imply $\tilde{F}_1 (1) =0$. We thus see that $\frac{\tilde{F}_1 (\rho) }{1 - \rho}$ is in fact a polynomial, and hence by arguing as we did in (\ref{estqtilderem}) and (\ref{estf1tildeeps0}), we get
\begin{equation*}
\left| \frac{\rho^{-n} \tilde{F}_1 (\rho)}{ 1- \rho} \right| < C(\epsilon_1)
\end{equation*}
uniformly on $ [ \frac{1}{2} ,1)$ and for all $\epsilon$ satisfying $0< \epsilon < \epsilon_1$, if $\epsilon_1$ is sufficiently small. We can thus evaluate $|h'' (\rho)| < \epsilon^{2}  C(\epsilon_1)$ for all $\rho \in [ \frac{1}{2} ,1)$, which finally establishes $h'' (\rho) > -\epsilon {C} (\epsilon_1)$ for all $\rho \in (\epsilon ,1 )$ and all $\epsilon$ satisfying $0 < \epsilon < \epsilon_1$, where $\epsilon_1$ is chosen to be sufficiently small.


\end{proof}

\subsection{Potential extension of Proposition \ref{mainthmextp1pn}}

As we saw in the above, the hypothesis $\epsilon \ll 1$ is essential in establishing the regularity (Proposition \ref{reghmaintr}) of the symplectic potential. However, as in the point blowup case (Theorem \ref{calextpclbpn}), it is natural to expect that the extremal metrics exist in each \kah class.
\begin{question}
Does Proposition \ref{reghmaintr} hold for any $0 < \epsilon <1$? In other words, does $\mathrm{Bl}_{\prj^1} \prj^n$ admit an extremal metric in each \kah class?
\end{question}
Some numerical results obtained by a computer experiment seem to suggest that the answer to this question should be affirmative.

\subsection*{Acknowledgements}
Most of the work presented in this paper was carried out at the University of Edinburgh under the supervision of Michael Singer, and the results in this paper originally appeared, with a sketch proof, in the author's first year report submitted to the University of Edinburgh in 2012. This work also forms part of the author's PhD thesis submitted to the University College London. He thanks both universities for supporting his studies, and Michael Singer for suggesting Problem \ref{blupprbhdmfd} and teaching him several facts on $\mathrm{Bl}_{\prj^1} \prj^n$ that are used in \S \ref{premointth}. He is grateful to Ruadha\'i Dervan, Joel Fine, Jason Lotay, and Julius Ross for helpful comments that improved this paper. 




\bibliographystyle{amsplain}
\bibliography{2015_22_ThesisTotal}

\begin{flushleft}
DEPARTMENT OF MATHEMATICS, UNIVERSITY COLLEGE LONDON \\
Email: \verb|yoshinori.hashimoto.12@ucl.ac.uk|, \verb|yh292@cantab.net|
\end{flushleft}

\end{document}